\documentclass{amsart}
\usepackage[utf8]{inputenc}

\usepackage{graphics}
\usepackage{thmtools}
\usepackage{wasysym}

\usepackage[T1]{fontenc}    

\usepackage{amsthm}
\usepackage{amsbsy,amsmath,amssymb,amscd,amsfonts}
\usepackage{hyperref}
\usepackage{url}            
\usepackage{booktabs}       
\usepackage{nicefrac}       
\usepackage{microtype}      

\usepackage{graphicx,float,latexsym,color}
\usepackage[font={scriptsize,it}]{caption}
\usepackage{subcaption}

\usepackage{makecell}

\usepackage[dvipsnames]{xcolor}

\newtheorem{theorem}{Theorem}

\newtheorem{proposition}{Proposition}

\newtheorem{corollary}{Corollary}
\newtheorem{lemma}{Lemma}
\theoremstyle{remark}
\newtheorem{remark}{Remark}
\theoremstyle{definition}
\newtheorem{definition}{Definition}

\hypersetup{
    pdftoolbar=true,        
    pdfmenubar=true,        
    pdffitwindow=false,     
    pdfstartview={FitH},    
    colorlinks=true,       
    linkcolor=OliveGreen,          
    citecolor=blue,        
    filecolor=black,      
    urlcolor=red           
}

\usepackage{lineno}

\arraycolsep=2pt
\captionsetup{width=120mm}

\usepackage{comment}
\usepackage{microtype}

\newcommand{\E}{\mathcal{E}}
\newcommand{\K}{\mathcal{K}}
\newcommand{\T}{\mathcal{T}}
\renewcommand{\S}{\mathcal{S}}
\renewcommand{\H}{\mathcal{H}}
\renewcommand{\L}{\mathcal{L}}

\title[Steiner's Hat: a Constant-Area Deltoid]{Steiner's Hat: a Constant-Area Deltoid\\Associated with the Ellipse}
\author[R. Garcia]{Ronaldo Garcia}
\author[D. Reznik]{Dan Reznik}
\author[H. Stachel]{Hellmuth Stachel}
\author[M. Helman]{Mark Helman}
\date{May, 2020}

\begin{document}

\maketitle



\begin{abstract}
The Negative Pedal Curve (NPC) of the Ellipse with respect to a boundary point $M$ is a 3-cusp closed-curve which is the affine image of the Steiner Deltoid. Over all $M$ the family has invariant area and displays an array of interesting properties. 

\vskip .3cm
\noindent\textbf{Keywords} curve, envelope, ellipse, pedal, evolute, deltoid, Poncelet, osculating, orthologic.
\vskip .3cm
\noindent \textbf{MSC} {51M04 \and 51N20 \and 65D18}
\end{abstract}

\section{Introduction}
\label{sec:intro}
Given an ellipse $\E$ with non-zero semi-axes $a,b$ centered at $O$, let $M$ be a point in the plane. The Negative Pedal Curve (NPC) of $\E$ with respect to $M$ is the envelope of lines passing through points $P(t)$ on the boundary of $\E$ and perpendicular to $[P(t)-M]$ \cite[pp. 349]{stachel2019-conics}. Well-studied cases \cite{lockwood1961-curves,zwikker2005} include placing $M$ on (i) the major axis: the NPC is a two-cusp ``fish curve'' (or an asymmetric ovoid for low eccentricity of $\E$); (ii) at $O$: this yielding a four-cusp NPC known as Talbot's Curve (or a squashed ellipse for low eccentricity), Figure~\ref{fig:fish_talbot}.

\begin{figure}[H]
    \centering
    \includegraphics[width=.9\textwidth]{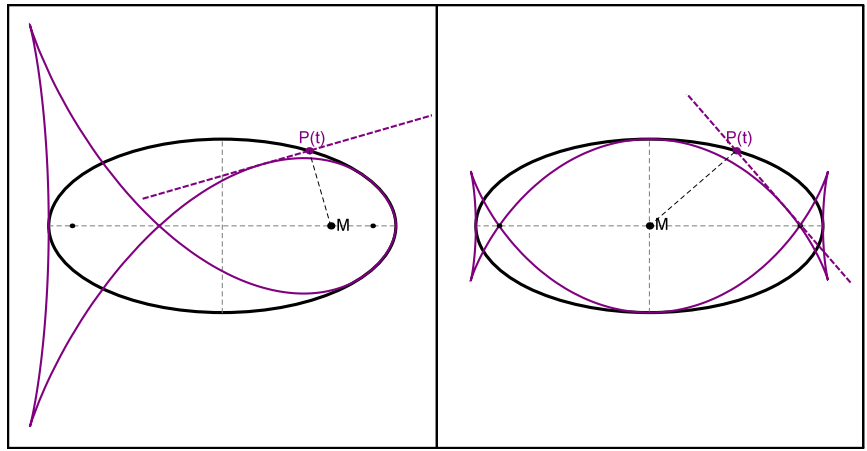}
    \caption{The Negative Pedal Curve (NPC) of an ellipse $\E$ with respect to a point $M$ on the plane is the envelope of lines passing through $P(t)$ on the boundary, and perpendicular to $P(t)-M$. \textbf{Left:} When $M$ lies on the major axis of $\E$, the NPC is a two-cusp ``fish'' curve. \textbf{Right:} When $M$ is at the center of $\E$, the NPC is 4-cusp curve with 2-self intersections known as Talbot's Curve \cite{mw}. For the particular aspect ratio $a/b=2$, the two self-intersections are at the foci of $\E$.}
    \label{fig:fish_talbot}
\end{figure}

As a variant to the above, we study the family of NPCs with respect to points $M$ on the {\em boundary} of $\E$. As shown in Figure~\ref{fig:main}, this yields a family of asymmetric, constant-area 3-cusped deltoids. We call these curves ``Steiner's Hat'' (or $\Delta$), since under a varying affine transformation, they are the image of the Steiner Curve (aka. Hypocycloid), Figure~\ref{fig:steiners}. Besides these remarks, we've observed:

\smallskip
\noindent \textbf{Main Results:}

\begin{itemize}
    \item The triangle $T'$ defined by the 3 cusps $P_i'$ has invariant area over $M$, Figure~\ref{fig:preimg_tri}.
    \item The triangle $T$ defined by the pre-images $P_i$ of the 3 cusps has invariant area over $M$, Figure~\ref{fig:preimg_tri}. The $P_i$ are the 3 points on $\E$ such that the corresponding tangent to the envelope is at a cusp.
    \item The $T$ are a Poncelet family with fixed barycenter; their caustic is half the size of $\E$, Figure~\ref{fig:preimg_tri}.
    \item Let $C_2$ be the center of area of $\Delta$. Then $M,C_2,P_1,P_2,P_3$ are concyclic, Figure~\ref{fig:preimg_tri}. The lines $P_i-C_2$ are tangents at the cusps.
    \item Each of the 3 circles passing through $M,P_i,P_i'$, $i=1,2,3$, osculate $\E$ at $P_i$, Figure~\ref{fig:osculating}. Their centers define an area-invariant triangle $T''$ which is a half-size homothety of $T'$. 
\end{itemize}

\begin{figure}
    \centering
    \includegraphics[width=.9\textwidth]{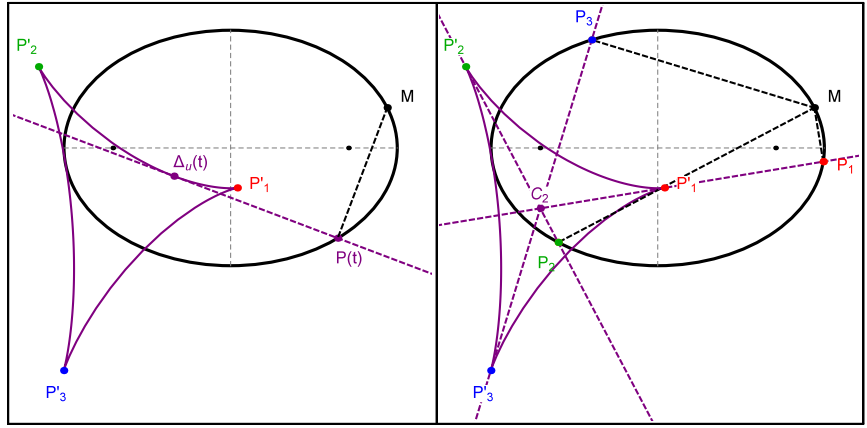}
    \caption{\textbf{Left:} The Negative Pedal Curve (NPC, purple) of $\E$ with respect to a boundary point $M$ is a 3-cusped (labeled $P_i'$) asymmetric curve (called here ``Steiner's Hat''), whose area is invariant over $M$, and whose asymmetric shape is affinely related to the Steiner Curve \cite{mw}. $\Delta_u(t)$ is the instantaneous tangency point to the Hat. \textbf{Right}: The tangents at the cusps points $P_i'$  
    concur at $C_2$, the Hat's center of area, furthermore, $P_i,P_i',C_2$ are collinear. \textbf{Video:} \cite[PL\#01]{playlist2020-deltoid}}
    \label{fig:main}
\end{figure}

\begin{figure}
    \centering
    \includegraphics[width=.9\textwidth]{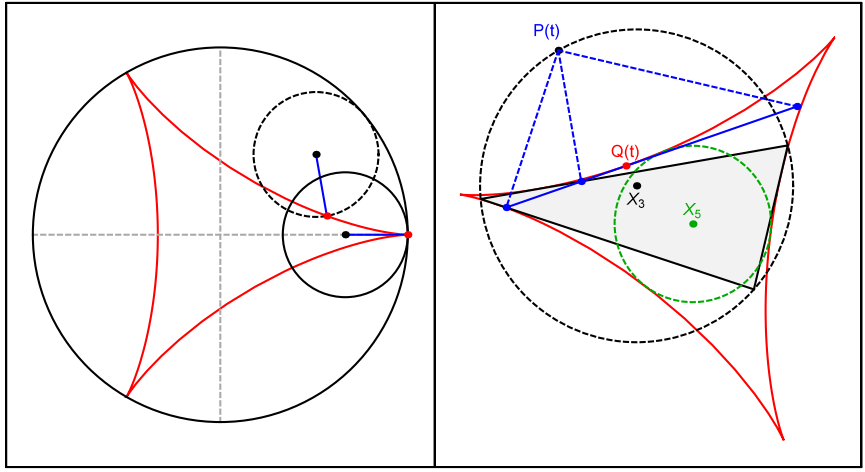}
    \caption{Two systems which generate the 3-cusp Steiner Curve (red), see \cite{ferreol2017-deltoid} for more methods. \textbf{Left:} The locus of a point on the boundary of a circle of radius $1 $ rolling inside another of radius $3$.
    \textbf{Right:} The envelope of Simson Lines (blue) of a triangle $T$ (black) with respect to points $P(t)$ on the Circumcircle \cite{mw}. $Q(t)$ denotes the corresponding 
    tangent. Nice properties include (i) the area of the Deltoid is half that of the Circumcircle, and (ii) the 9-point circle of $T$ (dashed green) centered on $X_5$ (whose radius is half that of the Circumcircle) is internally tangent to the Deltoid \cite[p.231]{wells1991}.}
    \label{fig:steiners}
\end{figure}


The paper is organized as follows. In Section~\ref{sec:main_props} we prove the main results. In Sections~\ref{sec:cusp_tri} and \ref{sec:preimg_tri} we describe properties of the triangles defined by the cusps and their pre-images, respectively. In Section~\ref{sec:cusp_locus} we analyze the locus of the cusps. In Section~\ref{sec:deltoid_tangs} we characterize the tangencies and intersections of Steiner's Hat with the ellipse. In Section~ \ref{sec:osc_circs} we describe properties of 3 circles which osculate the ellipse at the cusp pre-images and pass through $M$. In Section~\ref{sec:triangles} we describe relationships between the (constant-area) triangles with vertices at (i) cusps, (ii) cusp pre-images, and (iii) centers of osculating circles. In Section~\ref{sec:npc-rot} we analyze a fixed-area deltoid obtained from a ``rotated'' negative pedal curve. The paper concludes in Section~\ref{sec:conclusion} with a table of illustrative videos. Appendix~\ref{app:cusps} provides explicit coordinates for cusps, pre-images, and osculating circle centers. Finally, Appendix~\ref{app:symbols} lists all symbols used in the paper.

\section{Preliminaries}
\label{sec:preli}
Let the ellipse $\E$ be defined implicitly as:
 \[\E(x,y)=\frac{x^2}{a^2}+\frac{y^2}{b^2}-1=0, \;\;\; c^2=a^2-b^2\]

\noindent where $a>b>0$ are the semi-axes. Let a point $P(t)$ on its boundary be parametrized as $ P(t)=(a\cos t,b\sin t)$.

 Let $P_0=(x_0,y_0)\in\mathbb{R}^2$. Consider the family of lines $L(t)$ passing through $P(t)$ and orthogonal to $P(t)-P_0$.
 Its envelope $\Delta$ is called \textit{antipedal} or \textit{negative pedal curve} of $\E$.
 
 Consider the spatial curve defined by
 \[ \L(P_0)=\{(x,y, t): L(t,x,y)= L'(t,x,y)=0\}.\]
 The projection $\E(P_0)=\pi(\L(P_0))$ is the envelope. Here $\pi(x,y,t)=(x,y)$. In general, $\L(P_0)$ is regular, but $\E(P_0)$ is a curve with singularities and/or cusps.
 
 \begin{lemma}
 \label{lem:envelope}
 	The envelope of the family of lines $L(t)$ is given by:
 	\begin{align}
 	x(t)=&\frac{1}{w}[ ( a y_0\sin t -a b) x_0-b y_0^2\cos t  -c^2y_0\sin(2 t) \nonumber \\
 	+&\frac{b}{4}  (  (5a^2- b^2)\cos t-c^2\cos(3t) )] \nonumber\\
 	y(t)=&\frac{1}{w}[ - a x_0^2\sin t+( b y_0\cos t+c^2\sin(2 t)  ) x_0-aby_0 \nonumber\\
 	-&\frac{a}{4}  (  (5a^2- b^2)\sin t-c^2\sin(3t) ] 	 	\label{eq:envelope}
 	\end{align}
 where $w=ab- b x_0\cos t- a y_0\sin t$.
 \end{lemma}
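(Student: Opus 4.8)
The plan is to compute the envelope directly from its defining equations.The plan is to compute the envelope directly from its two defining conditions, $L(t,x,y)=0$ and $\partial L/\partial t=0$, and then to solve the resulting $2\times 2$ linear system for the point $(x,y)$. Since $L(t)$ passes through $P(t)=(a\cos t,\,b\sin t)$ and is orthogonal to $P(t)-P_0$, I would write its equation as the vanishing inner product
$$L(t,x,y)=(a\cos t-x_0)(x-a\cos t)+(b\sin t-y_0)(y-b\sin t)=0,$$
which is affine-linear in $(x,y)$ with $t$-dependent coefficients. Differentiating in $t$ produces a second affine-linear equation $\partial_t L=0$ whose $x$- and $y$-coefficients are $-a\sin t$ and $b\cos t$ respectively.

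Treating these two equations as a linear system $A(t)\,(x,y)^{\!\top}=r(t)$, a short computation shows that the coefficient determinant is exactly
$$\det A(t)=ab-b\,x_0\cos t-a\,y_0\sin t=w,$$
so wherever $w\neq 0$ Cramer's rule returns $x(t)$ and $y(t)$ as $1/w$ times explicit expressions in $\cos t,\sin t,x_0,y_0$ and the semi-axes. (The zeros of $w$ are exactly the parameter values at which the normal line passes through $P_0$, where the envelope is not defined by this formula.)

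The only real obstacle is to reduce these raw numerators to the stated closed form. They contain cubic monomials such as $\cos^3 t$ and $\cos t\sin^2 t$; I would first use $\sin^2 t=1-\cos^2 t$ to collapse the $x$-numerator to a cubic polynomial in $\cos t$ (and symmetrically a cubic in $\sin t$ for $y$), then apply the power-reduction identities $\cos^3 t=\tfrac14(3\cos t+\cos 3t)$ and $2\cos t\sin t=\sin 2t$. Tracking the coefficient of $\cos t$ and substituting $c^2=a^2-b^2$, the cubic contribution collapses precisely to $\tfrac{b}{4}\big((5a^2-b^2)\cos t-c^2\cos 3t\big)$, while the bilinear $x_0,y_0$ terms and the $-c^2 y_0\sin 2t$ term fall out directly. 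The derivation of $y(t)$ is entirely parallel under the exchange $\cos t\leftrightarrow\sin t$, $x_0\leftrightarrow y_0$ with the appropriate sign changes, so it requires no new ideas.
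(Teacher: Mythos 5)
Your proposal is correct and follows essentially the same route as the paper: write $L(t,x,y)=0$ explicitly (your inner-product form expands to the paper's line equation up to an overall sign), adjoin $\partial_t L=0$, and solve the resulting $2\times 2$ linear system in $(x,y)$, whose coefficient determinant is exactly $w=ab-bx_0\cos t-ay_0\sin t$. The paper's proof is just a terser version of this — it states the line equation and says the system is solved by elimination — so your added detail on Cramer's rule and the power-reduction identities merely fills in the computation the paper leaves implicit.
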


\begin{proof} The line $L(t)$ is given by:
	\[ ( x_0-a\cos t) x+( y_0-b\sin t)y+a^2\cos^2t+b^2\sin^2t-ax_0\cos t-by_0\sin t =0 \]
Solving the linear system $L(t)=L^\prime(t)=0$ in the variables $x,y$ leads to the result.	
\end{proof}

Triangle centers will be identifed below as $X_k$ following Kimberling's Encyclopedia \cite{etc}, e.g., $X_1$ is the Incenter, $X_2$ Barycenter, etc.

\section{Main Results}
\label{sec:main_props}
\begin{proposition}
The NPC with respect to $M_u=(a\cos u, b\sin u)$ a boundary point of $\E$ is a 3-cusp closed curve given by
$\Delta_u(t)=(x_u(t),y_u(t))$, where  
\begin{align}
x_u(t)=&\frac{1}{a }\left(c^2  (1+\cos(t+u))\cos t -a^2 \cos u\right) \nonumber \\
y_u(t)=& \frac{1}{b}\left(c^2 \cos t \sin(t+u)-c^2\sin t-a^2\sin u\right)
\end{align}
\label{eq:deltoide}
\end{proposition}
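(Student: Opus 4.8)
The plan is to specialize Lemma~\ref{lem:envelope} to the case where the pedal point lies on the ellipse, which is precisely the setting of the proposition. Lemma~\ref{lem:envelope} gives explicit formulas for the envelope $\Delta$ of the family of lines $L(t)$ for an arbitrary point $P_0=(x_0,y_0)$; here we take $P_0=M_u=(a\cos u, b\sin u)$, so the only computational task is to substitute $x_0=a\cos u$ and $y_0=b\sin u$ into equation~\eqref{eq:envelope} and simplify the resulting trigonometric expressions down to the compact form claimed for $x_u(t)$ and $y_u(t)$. First I would perform this substitution directly and carefully track each term.

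The key simplification driver will be the denominator: with $x_0=a\cos u$ and $y_0=b\sin u$, we get $w=ab - b(a\cos u)\cos t - a(b\sin u)\sin t = ab(1-\cos u\cos t-\sin u\sin t) = ab(1-\cos(t-u))$. This factorization is the crucial structural fact, because the claimed formulas have no denominator at all, which signals that a factor of $w$ (up to the constant $ab$) must cancel out of the bracketed numerators. So the real work is to show that both bracketed numerators in \eqref{eq:envelope}, after substitution, are divisible by $1-\cos(t-u)$, and that the quotient collapses to the stated polynomials in $\cos$ and $\sin$ of $t$, $u$, and $t+u$. I would expand the products $x_0^2$, $y_0^2$, $x_0 y_0$ using double-angle identities ($\cos^2 u$, $\sin^2 u$, $\cos u\sin u$), combine them with the $\sin(2t)$, $\cos(3t)$, $\sin(3t)$ terms via product-to-sum formulas, and then verify the divisibility by factoring $1-\cos(t-u)$ out of the numerator.

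The main obstacle I anticipate is purely the trigonometric bookkeeping: the numerators mix products like $c^2 y_0\sin(2t)$ and $c^2 x_0\sin(2t)$ with $x_0^2$ and $y_0^2$ terms and with triple-angle terms, so after substitution there are many mixed-frequency products in $t$ and $u$ that must be reorganized. The cleanest route is likely to rewrite everything in terms of $s:=t-u$ and $t+u$, since the denominator depends only on $t-u$; expressing the numerators in this shifted basis should make the cancellation of $1-\cos s$ transparent and expose the factor structure leading to the $(1+\cos(t+u))$ and $\sin(t+u)$ terms appearing in the claim. Once the division is carried out, I would confirm that the surviving $x_u(t)$ has the form $\frac{1}{a}(c^2(1+\cos(t+u))\cos t - a^2\cos u)$ and similarly for $y_u(t)$, and finally note that the three cusps correspond to the three values of $t$ where the envelope fails to be regular (i.e.\ where $x_u'(t)=y_u'(t)=0$), confirming it is a $3$-cusp closed curve as asserted.
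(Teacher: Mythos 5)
Your proposal is correct and follows essentially the same route as the paper, whose entire proof is that the result is a ``direct consequence of Lemma~\ref{lem:envelope} with $P_0=M_u$'': your substitution $x_0=a\cos u$, $y_0=b\sin u$, the factorization $w=ab(1-\cos(t-u))$, and the cancellation of this factor from the numerators is precisely the computation the paper leaves implicit. Your closing step of locating the cusps where $x_u'(t)=y_u'(t)=0$ is likewise consistent with the paper, which later identifies them at $t_i=-\frac{u}{3}-(i-1)\frac{2\pi}{3}$ in the proof of Theorem~\ref{th:circle_5}.
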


\begin{proof} It is direct consequence of Lemma \ref{lem:envelope} with $P_0=M_u$.
\end{proof}

Expressions for the 3 cusps $P_i'$ in terms of $u$ appear in Appendix~\ref{app:cusps}.

\begin{remark}
As $a/b\rightarrow{1}$ the ellipse becomes a circle and $\Delta$ shrinks to a point on the boundary of said circle.
\end{remark}

\begin{remark}
 Though $\Delta$ can never have three-fold symmetry, for $M_u$ at any ellipse vertex, it has axial symmetry.
\end{remark}

\begin{remark}
The average coordinates $\bar{C}=[\bar{x}(u),\bar{y}(u)]$ of $\Delta_u$ w.r.t. this parametrization are given by:

\begin{align}
\bar{x}(u)&=\frac{1}{2\pi} \int_0^{2\pi} x_u(t)\,dt=-\frac{(a^2+b^2)}{2a}\cos u \nonumber\\ \bar{y}(u)&=\frac{1}{2\pi} \int_0^{2\pi} y_u(t)\,dt=-\frac{(a^2+b^2)}{2b}\sin u
\label{eqn:cbar}
\end{align}
\end{remark}

\begin{theorem}
 $\Delta_u$ is the image of the 3-cusp Steiner Hypocycloid $\S$ under a varying affine transformation.
\label{th:affine}
\end{theorem}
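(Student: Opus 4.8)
My plan is to pass to complex coordinates and reduce the expression in Proposition~\ref{eq:deltoide} to a two-term ``epicyclic'' form, then invoke the fact that \emph{any} centered curve of the shape $\mu\,e^{is}+\nu\,e^{-2is}$ with $|\mu|=2|\nu|$ is directly similar to the standard Steiner hypocycloid $\mathcal{S}(s)=2e^{is}+e^{-2is}$. The reason is that a reparametrized orientation-preserving similarity of $\mathcal S$ has the form $\alpha\,\mathcal S(s+\beta)=2\alpha e^{i\beta}e^{is}+\alpha e^{-2i\beta}e^{-2is}$, whose two amplitudes are locked in the ratio $2:1$; conversely, given any $\mu,\nu$ with $|\mu|=2|\nu|$ one solves $3\beta=\arg\mu-\arg\nu$ and $\alpha=\nu e^{2i\beta}$ to realize it. Thus establishing the theorem reduces to writing the centered, axis-rescaled version of $\Delta_u$ in exactly this form.

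The concrete computation is short. Using the product-to-sum identities $\cos(t{+}u)\cos t=\tfrac12(\cos(2t{+}u)+\cos u)$ and $\cos t\sin(t{+}u)=\tfrac12(\sin(2t{+}u)+\sin u)$, and $c^2=a^2-b^2$, I would split each coordinate into its constant (translational) part, which reproduces the averages $\bar C=(\bar x(u),\bar y(u))$ of \eqref{eqn:cbar}, and its oscillatory part. After subtracting $\bar C$ and rescaling the two axes by $a$ and $b$ respectively, I form the complex combination $\zeta(t)=a\,(x_u(t)-\bar x(u))+i\,b\,(y_u(t)-\bar y(u))$ and obtain
\[
\zeta(t)=c^2 e^{-it}+\tfrac{c^2}{2}e^{i(2t+u)}=\tfrac{c^2}{2}\bigl(2e^{-it}+e^{iu}e^{2it}\bigr).
\]
Substituting $s=-t$ gives $\zeta=\tfrac{c^2}{2}\bigl(2e^{is}+e^{iu}e^{-2is}\bigr)$, i.e.\ precisely $\mu e^{is}+\nu e^{-2is}$ with $\mu=c^2$, $\nu=\tfrac{c^2}{2}e^{iu}$ and $|\mu|=2|\nu|$.

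By the characterization above, $\zeta$ is $\mathcal S$ rotated by $u/3$, scaled by $c^2/2$, and reparametrized by the shift $-u/3$; in particular $\zeta$ is itself a genuine Steiner deltoid, produced by an (orientation-reversing, owing to $s=-t$) similarity of $\mathcal S$ that varies with $u$. Since the passage from $\zeta$ back to $\Delta_u$ is the composition of the linear rescaling $(X,Y)\mapsto(X/a,\,Y/b)$ with the translation by $\bar C=(\bar x(u),\bar y(u))$ --- both affine and both $u$-dependent --- the total map $\mathcal A_u$ carrying $\mathcal S$ to $\Delta_u$ is affine and varies with $u$, which is the assertion. I expect the only real obstacle to be the first reduction: one must carry out the trigonometric collapse carefully so that the \emph{opposite signs} in the $e^{-it}$ versus $e^{2it}$ terms combine with the \emph{anisotropic} scaling by $a$ and $b$ to yield the locked $2:1$ amplitude ratio, rather than a generic Lissajous-type curve; verifying that amplitude criterion (and its sufficiency for similarity to $\mathcal S$) is the conceptual heart of the proof, while everything downstream is bookkeeping of affine maps.
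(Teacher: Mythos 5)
Your proposal is correct --- I verified the reduction: the constant terms collapse exactly to $\bar C$, and
$a\,(x_u(t)-\bar x(u))+i\,b\,(y_u(t)-\bar y(u))=c^2e^{-it}+\tfrac{c^2}{2}\,e^{i(2t+u)}$,
so your $2{:}1$ amplitude lemma applies and exhibits the centered, rescaled curve as the standard deltoid scaled by $c^2/2$ and rotated by $u/3$, up to a parameter shift. Structurally you produce the same kind of factorization as the paper (translation by $\bar C$, axis-rescaling $(x,y)\mapsto(x/a,y/b)$, homothety of ratio $c^2/2$, and a rotation applied to $\S$), but your route differs in two ways that matter. First, the paper simply asserts the composed identity $\Delta_u(t)=(U\circ V\circ D\circ R_u)\S(t)$, whereas you prove a reusable normal-form criterion --- any $\mu e^{is}+\nu e^{-2is}$ with $|\mu|=2|\nu|$ is directly similar to $\S$ --- which makes the locked $2{:}1$ ratio the visible reason the envelope is a deltoid rather than a generic two-frequency curve. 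Second, your rotation angle is the correct one and the paper's printed one is not: Equation~\eqref{eqn:affine} applies $R_u$ (clockwise rotation by $u$) with no parameter shift, which would place the cusps at polar angles $-u+2k\pi/3$ in the centered, rescaled frame, while the true cusps sit at angles $u/3+2k\pi/3$ (consistent with the coordinates of $P_i'$ in Appendix~\ref{app:cusps} and with the singular parameters $t_i=-u/3-2k\pi/3$); since the deltoid has only three-fold rotational symmetry, the two agree only when $u$ is a multiple of $\pi/2$. The corrected identity, which your computation yields, is $\Delta_u(t)=(U\circ V\circ D\circ R_{-u/3})\,\S(t+u/3)$; the slip does not propagate to the area corollary, since rotations have unit Jacobian. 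One small correction to your wording: the geometric map $z\mapsto\tfrac{c^2}{2}e^{iu/3}z$ is orientation-preserving; the substitution $s=-t$ merely reverses the direction of traversal, so describing the similarity as ``orientation-reversing'' is inaccurate.
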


\begin{proof} Consider the following transformations in $\mathbb{R}^2$:

\[
\begin{array}{lcl}
\mbox{rotation:} & R_u(x,y)=&\begin{pmatrix} \cos{u} & \sin {u}\\ -\sin{u} & \cos{u} 
	 \end{pmatrix}  \begin{pmatrix} x \\
	 y \end{pmatrix} \\
\mbox{translation:} & U(x,y)=&(x,y)+\bar{C} \\
\mbox{homothety:} & D(x,y) =& 
\frac{1}{2}(a^2-b^2)(x,y).
\\ \mbox{linear map:} & V(x,y) =&(\frac{x}{a},\frac{y}{b})
\end{array}
\]

The hypocycloid of Steiner is given by $\S(t)=2(\cos t,-\sin t)+(\cos 2t, \sin 2t)$ \cite{lockwood1961-curves}. Then:

\begin{equation}
    \Delta_u(t)=[x_u(t),y_u(t)]=  (U\circ V\circ  D\circ R_u) \S(t)
    \label{eqn:affine}
\end{equation}

Thus, Steiner's Hat is of degree 4 and of class 3 (i.e., degree of its dual).
\end{proof}

\begin{corollary}
The area of $A(\Delta)$ of Steiner's Hat is invariant over $M_u$ and is given by:

\begin{equation}
A(\Delta) =\frac{(a^2-b^2)^2\pi}{2 a b}= \frac{c^4 \pi}{ 2 a b}
\label{eqn:adelta}
\end{equation}
\end{corollary}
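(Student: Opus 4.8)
The plan is to leverage Theorem~\ref{th:affine}, which exhibits $\Delta_u$ as the image of the \emph{fixed} Steiner hypocycloid $\S$ under the composite affine map $U\circ V\circ D\circ R_u$. The governing principle is that an affine map $\mathbf{x}\mapsto A\mathbf{x}+\mathbf{b}$ multiplies every enclosed area by the constant $|\det A|$, the translation vector $\mathbf{b}$ playing no role. Hence the entire computation reduces to two ingredients: (i) the area of the reference curve $\S$, and (ii) the Jacobian determinant of the linear part of the composition.

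First I would fix the reference area $A(\S)$. Starting from $\S(t)=(2\cos t+\cos 2t,\,-2\sin t+\sin 2t)$ and Green's formula $A=\tfrac{1}{2}\bigl|\oint(x\,dy-y\,dx)\bigr|$, a short expansion (using $\cos t\cos 2t-\sin t\sin 2t=\cos 3t$) collapses the integrand to $-2+2\cos 3t$, whose integral over a full period is $-4\pi$; thus $A(\S)=2\pi$. Next I would collect the determinants of the three linear factors: $\det R_u=1$ for the rotation, $\det D=\tfrac{1}{4}(a^2-b^2)^2$ for the planar homothety with ratio $\tfrac{1}{2}(a^2-b^2)$, and $\det V=\tfrac{1}{ab}$ for $V:(x,y)\mapsto(x/a,y/b)$; the translation $U$ by $\bar{C}$ contributes the factor $1$. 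Their product, $\tfrac{(a^2-b^2)^2}{4ab}$, is visibly independent of $u$.

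Multiplying the two ingredients gives $A(\Delta)=2\pi\cdot\tfrac{(a^2-b^2)^2}{4ab}=\tfrac{(a^2-b^2)^2\pi}{2ab}=\tfrac{c^4\pi}{2ab}$, and invariance over $M_u$ is then immediate, since $u$ enters only through the area-preserving factors $R_u$ and $U$. Given Theorem~\ref{th:affine}, there is no deep obstacle remaining: the substantive geometry is already packaged in the affine reduction, and what is left is a determinant product. The one point deserving care — and hence the step I would flag — is that $\S$ is a \emph{simple} closed curve, so the signed area returned by Green's theorem coincides with the genuine enclosed area, with no cancellation from self-overlap (in contrast to the self-intersecting Talbot curve). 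Once this is noted, the corollary follows by direct substitution.
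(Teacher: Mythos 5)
Your proposal is correct and follows essentially the same route as the paper: both invoke Theorem~\ref{th:affine}, take the area of $\S$ to be $2\pi$, and multiply by the constant Jacobian determinant $c^4/(4ab)$ of the affine map. Your version merely spells out the Green's-theorem computation and the factorization of the determinant that the paper states without detail.
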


\begin{proof}
The area of $\S(t)$ is $\int_S xdy=2\pi$.   The Jacobian of $ (U\circ S\circ  D\circ R_u)$ given by Equation~\ref{eqn:affine} is constant and equal to $c^4/4ab$.
\end{proof}

Noting that the area of $\E$ is ${\pi}{a}{b}$, Table~\ref{tab:area_ratios} shows the aspect ratios $a/b$ of $\E$ required to produce special area ratios.

\begin{table}[H]
\begin{tabular}{|r|c|l|}
\hline
$a/b$ & approx. $a/b$ &  $A(\Delta)/A(\E)$  \\
\hline
$\sqrt{2 + \sqrt{3}}$ & $1.93185$ & 1 \\
$\varphi=(1+\sqrt{5})/2$ & $1.61803$ & 1/2\\
$\sqrt{2}$ & $1.41421$ & 1/4 \\
$1$ & 1 & 0 \\
\hline
\end{tabular}
\caption{Aspect ratios yielding special area ratios of main ellipse $\E$ to Steiner's Hat $\Delta$.} 
\label{tab:area_ratios}
\end{table}

It is well known that if $M$ is interior to $\E$ then the NPC is a 2-cusp or 4-cusp curve with one or two self-intersections.

\begin{remark}
It can be shown that when $M$ is interior to $\E$ the iso-curves of signed area of the NPC are closed algebraic curves of degree 10, concentric with $\E$ and symmetric about both axes, see Figure~\ref{fig:iso-inner}.
\end{remark}

It is remarkable than when $M$ moves from the interior to the boundary of $\E$, the iso-curves transition from a degree-10 curve to a simple conic.

\begin{figure}
    \centering
    \includegraphics[width=\textwidth]{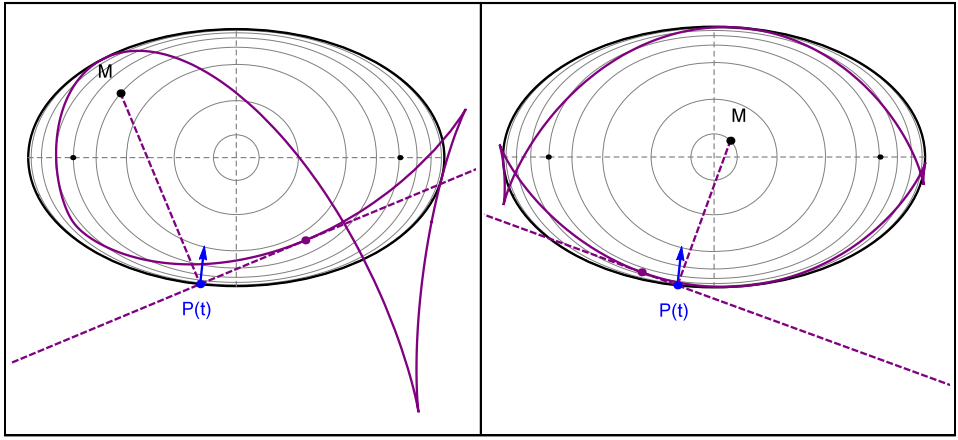}
    \caption{The isocurves of signed area for the negative pedal curve when $M$ is interior to the ellipse are closed algebraic curves of degree 10. These are shown in gray for an NPC with two cusps (left), and 4 cusps (right).}
    \label{fig:iso-inner}
\end{figure}

\begin{remark}
It can also be shown that when $M$ is exterior to $\E$, the NPC is a two-branched open curve, see Figure~\ref{fig:m-outside}.
\end{remark}

\begin{figure}
    \centering
    \includegraphics[width=\textwidth]{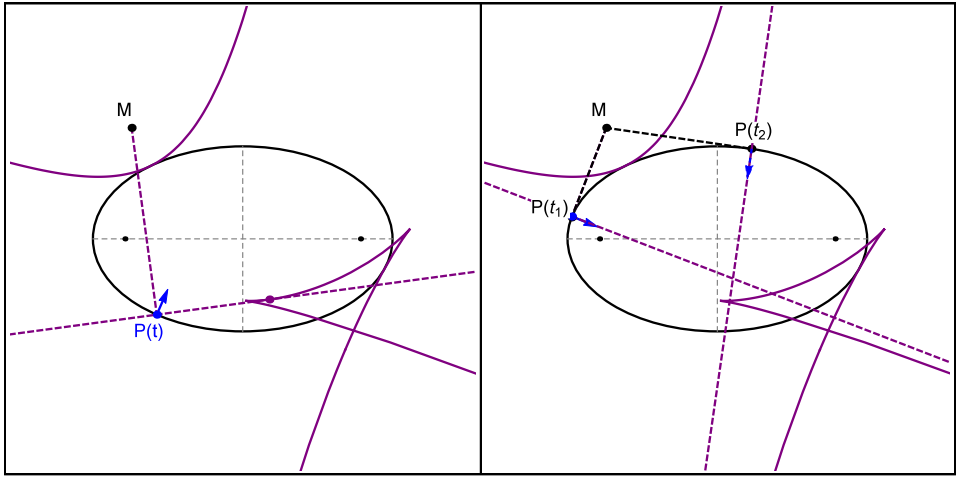}
    \caption{\textbf{Left:} When $M$ is exterior to $\E$ the NPC is a two-branched open curve. One branch is smooth and non-self-intersecting, and the other has 2 cusps and one self-intersection. \textbf{Right:} Let $t_1,t_2$ be the parameters for which $MP(t)$ is tangent to $\E$. At these positions, the NPC intersects the line at infinity in the direction of the normal at $P(t_1),P(t_2)$, i.e., the lines through $P(t)$ perpendicular to $P(t)-M$ are asymptotes.}
    \label{fig:m-outside}
\end{figure}

\begin{proposition}

Let $C_2$ be the center of area of $\Delta_u$. Then $C_2=\bar{C}$.
\end{proposition}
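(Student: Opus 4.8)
The plan is to avoid evaluating the Green's-theorem centroid integrals of $\Delta_u$ directly and instead to exploit the affine representation of Theorem~\ref{th:affine}, using that \emph{both} the center of area and the parametric average are covariant under affine maps. Write $\Phi = U\circ V\circ D\circ R_u$, so that $\Delta_u = \Phi\circ\S$ by \eqref{eqn:affine}. Here $\Phi$ is an invertible affine map (since $a>b>0$ makes each factor invertible) whose linear part $V\circ D\circ R_u$ fixes the origin and whose translation part is exactly $\bar{C}$; thus $\Phi(O)=\bar{C}$. Because $\Phi$ is a homeomorphism and the Steiner deltoid $\S$ is a simple closed curve, $\Delta_u$ is simple as well, hence bounds a well-defined region and $C_2$ is its honest centroid.

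First I would record the two covariance facts. For an invertible affine map $\Phi(p)=Ap+v$ the change-of-variables formula gives $\iint_{\Phi(R)}q\,dq=|\det A|\,(A\iint_R p\,dp+v\,\mathrm{Area}(R))$ together with $\mathrm{Area}(\Phi(R))=|\det A|\,\mathrm{Area}(R)$; dividing, the $|\det A|$ cancels and the centroid of $\Phi(R)$ equals $\Phi$ of the centroid of $R$. Likewise, by linearity of the integral the parametric average satisfies $\frac{1}{2\pi}\int_0^{2\pi}\Phi(\gamma(t))\,dt=\Phi\!\left(\frac{1}{2\pi}\int_0^{2\pi}\gamma(t)\,dt\right)$. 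Applying these to $\Delta_u=\Phi\circ\S$ yields $C_2=\Phi(\mathrm{cen}\,\S)$ and (consistently with \eqref{eqn:cbar}) $\bar{C}=\Phi(\mathrm{avg}\,\S)$, where $\mathrm{cen}$ and $\mathrm{avg}$ denote the area centroid and the parametric average. The desired identity $C_2=\bar{C}$ thus collapses to the single claim that these two points coincide for the Steiner hypocycloid.

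Next I would show both equal the origin $O$. The parametric average is immediate: every harmonic term in $\S(t)=2(\cos t,-\sin t)+(\cos 2t,\sin 2t)$ has zero mean over $[0,2\pi]$, so $\mathrm{avg}\,\S=O$. For the area centroid I would invoke the three-fold rotational symmetry of the deltoid about $O$: in complex form $z(t)=2e^{-it}+e^{2it}$, and a one-line check gives $e^{2\pi i/3}z(t)=z(t-2\pi/3)$, so rotation by $120^\circ$ about $O$ carries $\S$ onto itself. The centroid of a region is invariant under every symmetry of that region, and the unique point fixed by a $120^\circ$ rotation is its center $O$; hence $\mathrm{cen}\,\S=O$ too. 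Therefore $C_2=\Phi(O)=\bar{C}$.

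The main obstacle is making the covariance of the \emph{area} centroid fully rigorous, rather than the much easier covariance of the parametric average: one must confirm that $\Delta_u$ is genuinely simple so that ``center of area'' is well defined, and track orientation and signs in the Green's-theorem area and moment integrals so that the Jacobian factor cancels cleanly. The symmetry identity $e^{2\pi i/3}z(t)=z(t-2\pi/3)$ for $\S$ is the other load-bearing step, though it is only a short exponential computation. As a fallback requiring neither symmetry nor covariance, one can instead verify the statement by evaluating the Green's-theorem moments $\frac{1}{2A}\oint x_u^2\,dy_u$ and $-\frac{1}{2A}\oint y_u^2\,dx_u$ directly from the explicit parametrization of $\Delta_u$ and comparing with \eqref{eqn:cbar}.
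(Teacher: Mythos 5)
Your proof is correct, but it is not the argument the paper writes out: the paper's proof evaluates the centroid integrals directly, applying Green's theorem to the parametrization of Equation~\ref{eq:deltoide} and checking that the result equals Equation~\ref{eqn:cbar}; the affine route you take is the one the paper mentions only in a closing one-liner (``alternatively, one can obtain the same result from the affine transformation''), and you are the one who actually supplies its content. Your elaboration is sound: area centroids of plane regions are affinely covariant (the $|\det A|$ factor cancels between the moment and the area), parametric averages are affinely covariant by linearity, the translation part of $\Phi=U\circ V\circ D\circ R_u$ is exactly $\bar C$ so $\Phi(O)=\bar C$, and the genuinely load-bearing ingredient that the paper never states --- that the area centroid of the Steiner deltoid is its center --- follows from your symmetry identity $e^{2\pi i/3}z(t)=z(t-2\pi/3)$ for $z(t)=2e^{-it}+e^{2it}$, which does check out since $e^{-4\pi i/3}=e^{2\pi i/3}$. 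What your approach buys is that no integral over $\Delta_u$ is ever computed and the identity $C_2=\bar C$ is \emph{explained}: both points are the image of the symmetry center of $\S$ under the same affine map. What the paper's computation buys is independence from Theorem~\ref{th:affine} and a simultaneous re-derivation of the coordinates in Equation~\ref{eqn:cbar}. Two minor remarks: your consistency check $\bar C=\Phi(\mathrm{avg}\,\S)$ is redundant, since $\bar C$ is already computed in the paper's remark and $\Phi(O)=\bar C$ is immediate from the definition of $U$; and your care about simplicity of $\Delta_u$ is fine but could be stated more simply --- the region enclosed by the image of a simple closed curve under the invertible affine map $\Phi$ is just $\Phi$ of the region enclosed by $\S$, so the centroid comparison needs no separate orientation or sign bookkeeping.
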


\begin{proof}
	
The center of area is defined by
\[ C_2=\frac{1}{A(\Delta)}\left(  \int_{int(\Delta)} x\,dx\,dy, \int_{int(\Delta)} y\,dx\,dy\right).\]

Using Green's Theorem, evaluate the above using the parametric in Equation~\eqref{eq:deltoide}. This yields the expression for $\bar{C}$ in Equation~\ref{eqn:cbar}. Alternatively, one can obtain the same result from the affine transformation defined in Theorem~\ref{th:affine}.
\end{proof}

\noindent Referring to Figure~\ref{fig:deltoid_c2}(left): 
\begin{corollary}
The locus of $C_2$ is an ellipse always exterior to a copy of $\E$ rotated $90^\circ$ about $O$.
\end{corollary}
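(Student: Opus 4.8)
The plan is to leverage the identity $C_2=\bar C$ just established, so that the locus of $C_2$ is exactly the curve traced by $\bar C=(\bar x(u),\bar y(u))$ as $u$ ranges over $[0,2\pi)$, whose coordinates are given in closed form in Equation~\eqref{eqn:cbar}. First I would eliminate the parameter $u$: writing $X=\bar x(u)$ and $Y=\bar y(u)$, one reads off $\cos u=-2aX/(a^2+b^2)$ and $\sin u=-2bY/(a^2+b^2)$, and the identity $\cos^2u+\sin^2u=1$ gives
\[
\frac{X^2}{\left(\frac{a^2+b^2}{2a}\right)^2}+\frac{Y^2}{\left(\frac{a^2+b^2}{2b}\right)^2}=1 .
\]
This already shows the locus is an ellipse centered at $O$ with axes aligned to those of $\E$; call it $\E_C$, with semi-axis $\alpha=(a^2+b^2)/(2a)$ along the $x$-axis and $\beta=(a^2+b^2)/(2b)$ along the $y$-axis. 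Since $a>b$ we have $\alpha<\beta$, so the major axis of $\E_C$ is vertical.

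Next I would write down the ninety-degree rotation of $\E$ about $O$. Since $\E$ has semi-axis $a$ along $x$ and $b$ along $y$, rotating by $90^\circ$ yields the coaxial ellipse $\E^{\perp}\colon x^2/b^2+y^2/a^2=1$, with semi-axis $b$ along $x$ and $a$ along $y$. Both $\E_C$ and $\E^{\perp}$ are centered at $O$ with axes along the coordinate directions, and both have their longer axes vertical, so the ``exterior'' claim reduces to a comparison of corresponding semi-axes.

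The remaining step is the inequality. Comparing $x$-semi-axes, $\alpha\ge b$ is equivalent to $a^2+b^2\ge 2ab$, i.e. $(a-b)^2\ge 0$; comparing $y$-semi-axes, $\beta\ge a$ is likewise equivalent to $(a-b)^2\ge 0$. Both hold, and strictly whenever $a\neq b$. Since for two concentric coaxial ellipses the domination of each semi-axis is equivalent to set-theoretic containment — for any $(x,y)$ on $\E^{\perp}$ one has $x^2/\alpha^2+y^2/\beta^2\le x^2/b^2+y^2/a^2=1$ — it follows that $\E^{\perp}$ lies strictly inside $\E_C$, so the locus $\E_C$ is entirely in the exterior of the rotated copy $\E^{\perp}$, as claimed.

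I do not expect a genuine obstacle here: the statement is essentially a parameter elimination followed by the inequality $(a-b)^2\ge 0$. The only point requiring a little care is the orientation bookkeeping — confirming that the major axis of $\E_C$ and the major axis of $\E^{\perp}$ both point along $y$, so that the semi-axis comparison pairs genuinely corresponding axes. This is what makes the two inequalities collapse to the single condition $(a-b)^2\ge 0$ and guarantees containment of the two conics rather than a crossing.
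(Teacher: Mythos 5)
Your proposal is correct and follows essentially the same route as the paper, which simply notes that Equation~\eqref{eqn:cbar} describes an ellipse and that $a^2+b^2\geq 2ab$ yields the exterior claim. You have merely filled in the details the paper leaves implicit (parameter elimination, identification of the semi-axes $(a^2+b^2)/(2a)$ and $(a^2+b^2)/(2b)$, and the semi-axis comparison giving containment), all of which is accurate.
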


\begin{figure}
    \centering
    \includegraphics[width=\textwidth]{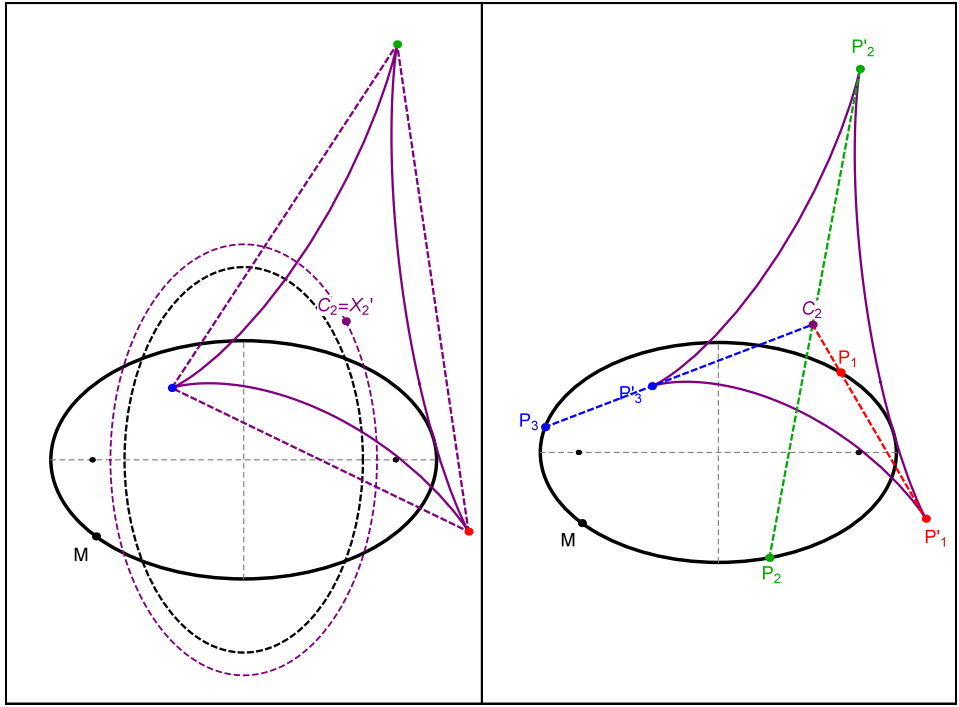}
    \caption{\textbf{Left:} The area center $C_2$ of Steiner's Hat coincides with the barycenter $X_2'$ of the (dashed) triangle $T'$ defined by the cusps. Over all $M$, both the Hat and $T'$ have invariant area. $C_2$'s locus (dashed purple) is elliptic and exterior to a copy of $\E$ rotated $90^\circ$ about its center (dashed black). \textbf{Right:} Let $P_i'$ (resp. $P_i$), $i=1,2,3$ denote the Hat's cusps (resp. their {\em pre-images} on $\E$), colored by $i$. Lines ${P_i}{P_i'}$ concur at $C_2$.}
    \label{fig:deltoid_c2}
\end{figure}

\begin{proof}
Equation~\ref{eqn:cbar} describes an ellipse. Since $a^2+b^2{\geq}2ab$ the claim follows directly.
\end{proof}

\noindent Let $T$ denote the triangle of the {\em pre-images} $P_i$ on $\E$ of the Hat's cusps, i.e. $P(t_i)$ such that $\Delta_u{t_i}$ is a cuspid. Explicit expressions for the $P_i$ appear in Appendix~\ref{app:cusps}. Referring to Figure~\ref{fig:preimg_tri}:

\begin{theorem}
  The points $M,C_2,P_1,P_2,P_3$ are concyclic.
  \label{th:circle_5}
\end{theorem}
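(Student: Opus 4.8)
The plan is to exhibit the circle explicitly. Since three non‑collinear points determine a unique circle, it suffices to find the cusp pre‑images, build the circle through $P_1,P_2,P_3$, and then verify that $M$ and $C_2$ satisfy its equation. First I would locate the cusps from the parametrization of Proposition~\ref{eq:deltoide}. Differentiating and using the angle‑addition formulas gives the clean expressions $x_u'(t)=-\tfrac{c^2}{a}\bigl(\sin(2t+u)+\sin t\bigr)$ and $y_u'(t)=\tfrac{c^2}{b}\bigl(\cos(2t+u)-\cos t\bigr)$, so the cusp condition $\Delta_u'(t)=0$ becomes $\cos(2t+u)=\cos t$ and $\sin(2t+u)=-\sin t$, i.e. $e^{i(2t+u)}=e^{-it}$, hence $3t+u\equiv 0\pmod{2\pi}$. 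Thus the cusp parameters are $t_k=(2k\pi-u)/3$ for $k=0,1,2$, and the pre‑images $P_k=(a\cos t_k,b\sin t_k)$ carry eccentric angles equally spaced by $2\pi/3$ about $-u/3$.

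The concyclicity of $M$ with the three $P_k$ is then almost immediate. Writing a general circle as $x^2+y^2+Dx+Ey+F=0$ and substituting $(a\cos\tau,b\sin\tau)$, one clears denominators in $z=e^{i\tau}$ to get a quartic whose leading and constant coefficients are both $c^2/4$; its four roots therefore multiply to $1$, so the four eccentric angles in which any circle meets $\E$ sum to $0\pmod{2\pi}$. Since $t_0+t_1+t_2\equiv -u$ and $M=M_u$ has eccentric angle $u$, the four angles $t_0,t_1,t_2,u$ sum to $0$, and hence $M,P_1,P_2,P_3$ lie on a common circle $\K$.

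It remains to place $C_2=\bar C$ on $\K$, and here I need the coefficients explicitly. Substituting the three $P_k$ into the circle equation and reducing via $\cos^2,\sin^2$ gives, for each $k$, the relation $\tfrac{c^2}{2}\cos 2t_k+Da\cos t_k+Eb\sin t_k+\bigl(\tfrac{a^2+b^2}{2}+F\bigr)=0$. Summing over $k$ and using $\sum_k\cos t_k=\sum_k\sin t_k=\sum_k\cos 2t_k=0$ (equally spaced angles) forces $F=-\tfrac{a^2+b^2}{2}$. Solving the remaining consistent system for $D,E$ — most transparently by setting $z_k=e^{-iu/3}\omega^k$ with $\omega=e^{2\pi i/3}$, which collapses the three equations into a single identity relating $\tfrac{c^2}{2}e^{-2iu/3}$ to its conjugate — yields $D=-\tfrac{c^2}{2a}\cos u$ and $E=\tfrac{c^2}{2b}\sin u$. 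The center of $\K$ is then $\bigl(\tfrac{c^2}{4a}\cos u,\,-\tfrac{c^2}{4b}\sin u\bigr)$, and a short computation using $c^2=a^2-b^2$ shows this is exactly the midpoint of $M$ and $C_2$. Thus $C_2$ is the antipode of $M$ on $\K$, and all five points are concyclic. (Equivalently, one substitutes $C_2$ directly into $x^2+y^2+Dx+Ey+F=0$ and simplifies using $2\cdot\tfrac{a^2+b^2}{2}+c^2=2a^2$ and $2\cdot\tfrac{a^2+b^2}{2}-c^2=2b^2$.)

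The main obstacle is the determination of $D$ and $E$: the three pre‑image conditions pin down $D,E,F$ but isolating $D,E$ by raw trigonometric elimination is opaque, so the complex‑exponential collapse is the one genuinely non‑mechanical step. Everything else — the cusp parameters, the four‑angle criterion placing $M$ on $\K$, and the antipode check placing $C_2$ on $\K$ — is routine algebra in $a,b,c$ once that step is carried out.
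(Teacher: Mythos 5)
Your proof is correct, and its skeleton is the same as the paper's: identify the cusp parameters $t_k\equiv -u/3 \pmod{2\pi/3}$, write down the explicit circle $x^2+y^2+Dx+Ey+F=0$ through the three pre-images (your $D=-\tfrac{c^2}{2a}\cos u$, $E=\tfrac{c^2}{2b}\sin u$, $F=-\tfrac{a^2+b^2}{2}$ is exactly the paper's circle $\K$ in Equation~\eqref{eq:circulo}), and then observe that the center $(-D/2,-E/2)$ is the midpoint of $M$ and $C_2$, so that $C_2$ is the antipode of $M$ on $\K$ — that final step is precisely the paper's argument. You differ in two sub-steps. First, you derive the cusp parameters from $\Delta_u'(t)=0$, which the paper merely asserts; that is a worthwhile addition. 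Second, to place $M$ on $\K$ you invoke the classical fact that a circle meets $\E$ in four points whose eccentric angles sum to $0\pmod{2\pi}$, whereas the paper simply substitutes $M=(a\cos u,b\sin u)$ into the circle equation. One caution on that step: as written, you apply the \emph{converse} of what your quartic computation proves (that concyclicity forces the angle sum to vanish, not that a vanishing angle sum forces concyclicity). The patch is immediate with your own machinery: the circle through $P_1,P_2,P_3$ meets $\E$ in a fourth point whose root $z_4=(z_1z_2z_3)^{-1}$ has unit modulus, hence is a real point with eccentric angle $\equiv -(t_1+t_2+t_3)\equiv u$, which is $M$. Alternatively, since you have $D,E,F$ explicitly, you can just substitute $M$ directly, as the paper does — a one-line check. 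With either repair the argument is complete.
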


\begin{figure}
    \centering
    \includegraphics[width=.7\textwidth]{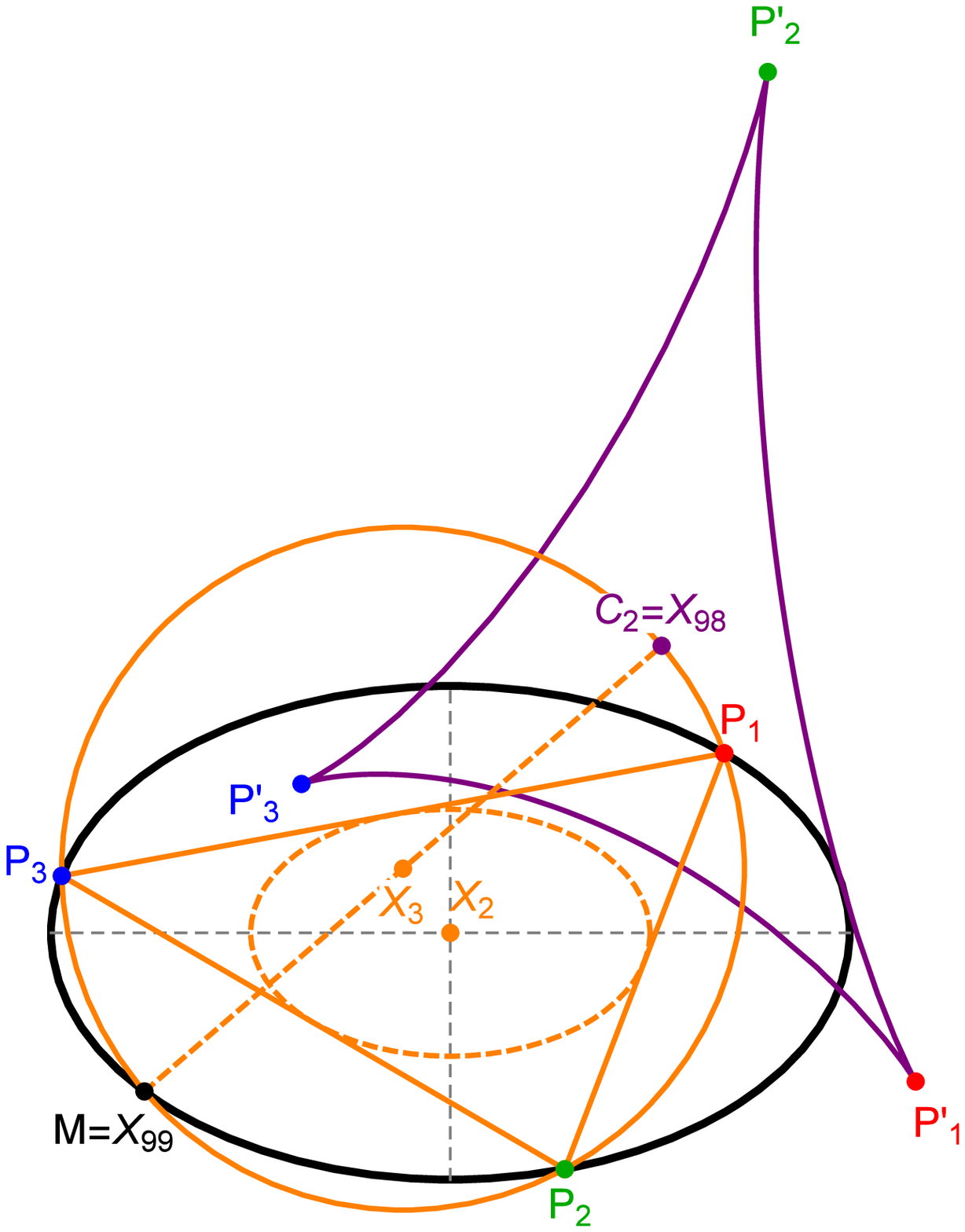}
    \caption{The cusp pre-images $P_i$ define a triangle $T$ (orange) whose area is invariant over $M$. Its barycenter $X_2$ is stationary at the center of $\E$, rendering the latter its Steiner Ellipse. Let $C_2$ denote the center of area of Steiner's Hat. The 5 points $M,C_2,P_1,P_2,P_3$ lie on a circle (orange), with center at $X_3$ (circumcenter of $T$). Over all $M$, the $T$ are a constant-area Poncelet family inscribed on $\E$ and tangent to a concentric, axis-aligned elliptic caustic (dashed orange), half the size of $\E$, i.e., the latter is the (stationary) Steiner Inellipse of the $T$. Note also that $M$ is the Steiner Point $X_{99}$ of $T$ since it is the intersection of its Circumcircle with the Steiner Ellipse. Furthermore, the Tarry Point $X_{98}$ of $T$ coincides with $C_2$, since it is the antipode of $M=X_{99}$ \cite{etc}. \textbf{Video:} \cite[PL\#02,\#05]{playlist2020-deltoid}.}
    \label{fig:preimg_tri}
\end{figure}

\begin{proof}
$\Delta_u(t)$ is singular at $t_1=-\frac{u}{3}$, $t_2=-\frac{u}{3}-\frac{2\pi}{3}$ and $t_3=-\frac{u}{3}-\frac{4\pi}{3}$. Let $P_i=[a\cos{t_i},b\sin{t_i}],i=1,2,3$. The circle $\K$ passing through these is given by:
 	\begin{equation}\label{eq:circulo}
 	\K(x,y)=x^2+y^2-\frac{c^2\cos u}{2a}x  
 	+\frac{c^2\sin u}{2b}y -\frac{1}{2}( a^2 +b^2)=0.
 	\end{equation}
 	Also, $\K(M)=\K(a\cos u,b\sin u)=0$.
 	
The center of $\K$ is $(M+C_2)/2$. It follows that $C_2\in \K$ and that $MC_2$ is a diameter of $\K$.
\end{proof}

\noindent In 1846, Jakob Steiner stated that given a point $M$ on an ellipse $\E$, there exist 3 other points on it such that the osculating circles at these points pass through $M$ \cite[page 317]{ostermann2012}. This property is also mentioned in \cite[page 97, Figure 3.26]{stachel2019-conics}.

It turns out the cusp pre-images are said special points! Referring to Figure~\ref{fig:osculating}:

\begin{proposition}
Each of the 3 circles $\K_i$ through $M,P_i,P_i'$, $i=1,2,3$, osculate $\E$ at $P_i$.
\label{prop:osculate}
\end{proposition}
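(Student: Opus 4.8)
The plan is to show that $\K_i$ coincides with the osculating circle $\mathcal{O}_i$ of $\E$ at $P_i$, i.e. the circle of curvature there; osculation is then automatic by definition. Since $\K_i$ is by construction the unique circle through the three points $M,P_i,P_i'$, and $\mathcal{O}_i$ passes through the contact point $P_i$, it suffices to establish the two incidences $M\in\mathcal{O}_i$ and $P_i'\in\mathcal{O}_i$. As three non-collinear points determine a circle, these force $\K_i=\mathcal{O}_i$. (Their generic non-collinearity is exactly what guarantees $\K_i$ is well defined in the first place.)

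For $M\in\mathcal{O}_i$ I would use the classical concyclicity criterion on $\E$: four points $P(s_1),\dots,P(s_4)$ are concyclic iff $s_1+s_2+s_3+s_4\equiv 0\pmod{2\pi}$. This follows by substituting $P(s)=(a\cos s,b\sin s)$ into a generic circle $x^2+y^2+Dx+Ey+F=0$ and reading off the product of the four roots of the resulting quartic in $z=e^{is}$, whose leading and constant coefficients are both $\tfrac14(a^2-b^2)$, so the product is $1$. The osculating circle has contact of order two, i.e. intersection multiplicity three, with $\E$ at $P_i$; as two conics meet in four points, for $P_i$ away from the axis vertices $\mathcal{O}_i$ meets $\E$ in exactly one further point $P(u_\ast)$ with $3t_i+u_\ast\equiv 0\pmod{2\pi}$. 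Since the singular parameters from Theorem~\ref{th:circle_5} are $t_i=-u/3+2(i-1)\pi/3$, we get $3t_i\equiv -u$, hence $u_\ast=u$ and $M=P(u)\in\mathcal{O}_i$. This step simultaneously reproves Steiner's theorem and identifies the cusp pre-images as his three special points.

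The substantive part, and the main obstacle, is $P_i'\in\mathcal{O}_i$. I would present $\mathcal{O}_i$ through its center, the evolute point $E(t_i)=\big(\tfrac{c^2}{a}\cos^3 t_i,-\tfrac{c^2}{b}\sin^3 t_i\big)$, and its radius, the radius of curvature $\rho(t_i)=\big(a^2\sin^2 t_i+b^2\cos^2 t_i\big)^{3/2}/(ab)$. Substituting $u=-3t_i$ into Equation~\eqref{eq:deltoide} and reducing with the triple-angle formulas collapses the cusp to $P_i'=\big(3a\cos t_i-\tfrac{2(a^2+b^2)}{a}\cos^3 t_i,\ 3b\sin t_i-\tfrac{2(a^2+b^2)}{b}\sin^3 t_i\big)$. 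Writing $S=\sin^2 t_i$, $C=\cos^2 t_i$ and $Q=a^2S+b^2C$, a short factorization shows both $|P_i'-E(t_i)|^2$ and $\rho(t_i)^2$ equal $3SC\,Q+(a^6S^3+b^6C^3)/(a^2b^2)$, the equality being precisely the binomial expansion $Q^3=a^6S^3+b^6C^3+3a^2b^2SC\,Q$. This yields $P_i'\in\mathcal{O}_i$ and completes the argument.

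I expect the algebra of this final identity to be the only real friction; everything else is structural. A cleaner, coordinate-free alternative worth attempting would exploit the reflection property of the negative pedal curve, namely that the reflection of $M$ in each line $L(t)$ is the point $2P(t)-M$ of the doubled ellipse $2\E-M$, in order to locate $P_i'$ relative to the circle of curvature synthetically; but the direct verification above is the most reliable route.
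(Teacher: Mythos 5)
Your proof is correct — I verified both computations: substituting $u=-3t_i$ into Equation~\eqref{eq:deltoide} with triple-angle reductions does give $P_i'=\bigl(3a\cos t_i-\tfrac{2(a^2+b^2)}{a}\cos^3 t_i,\,3b\sin t_i-\tfrac{2(a^2+b^2)}{b}\sin^3 t_i\bigr)$, and the squared distance from $P_i'$ to the evolute point does reduce to $\rho(t_i)^2$ via exactly the identity $Q^3=a^6S^3+b^6C^3+3a^2b^2SC\,Q$ — but your route is genuinely different from the paper's. The paper works in the opposite direction: it writes down the explicit equation of the circle $\K_1$ through $M,P_1,P_1'$ and then verifies that its center is $\E^{*}(-\tfrac{u}{3})$, a point of the evolute; since $\K_1$ passes through $P_1$ by construction, it is the circle of curvature there (the paper's stated criterion ``a circle osculates if its center lies on the evolute'' is, as written, incomplete — one also needs the circle to pass through the corresponding ellipse point — but the construction supplies that implicitly). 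You instead start from the osculating circle $\mathcal{O}_i$, given by its evolute center and curvature radius, and establish the two incidences $M\in\mathcal{O}_i$ and $P_i'\in\mathcal{O}_i$ separately, concluding $\K_i=\mathcal{O}_i$ by uniqueness of the circle through three non-collinear points. What this buys: your first incidence, proved via the eccentric-angle concyclicity criterion ($s_1+s_2+s_3+s_4\equiv 0 \pmod{2\pi}$, applied with a triple root at $t_i$), is a conceptual argument that independently reproves the 1846 theorem of Steiner quoted just before the proposition and explains \emph{why} the cusp pre-images $t_i=-\tfrac{u}{3}-(i-1)\tfrac{2\pi}{3}$ are precisely his three special points, something the paper's one-shot verification leaves opaque. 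What the paper's route buys is brevity: a single circle equation and one center identification, with no need for the distance computation or the multiplicity-counting form of the concyclicity criterion. Your parenthetical restriction to $P_i$ away from the vertices is the honest price of the Bézout argument; the degenerate cases (where $M$ coincides with a $P_i$ and $\K_i$ is not determined by three distinct points) are equally outside the scope of the paper's statement.
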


\begin{figure}
    \centering
    \includegraphics[width=.6\textwidth]{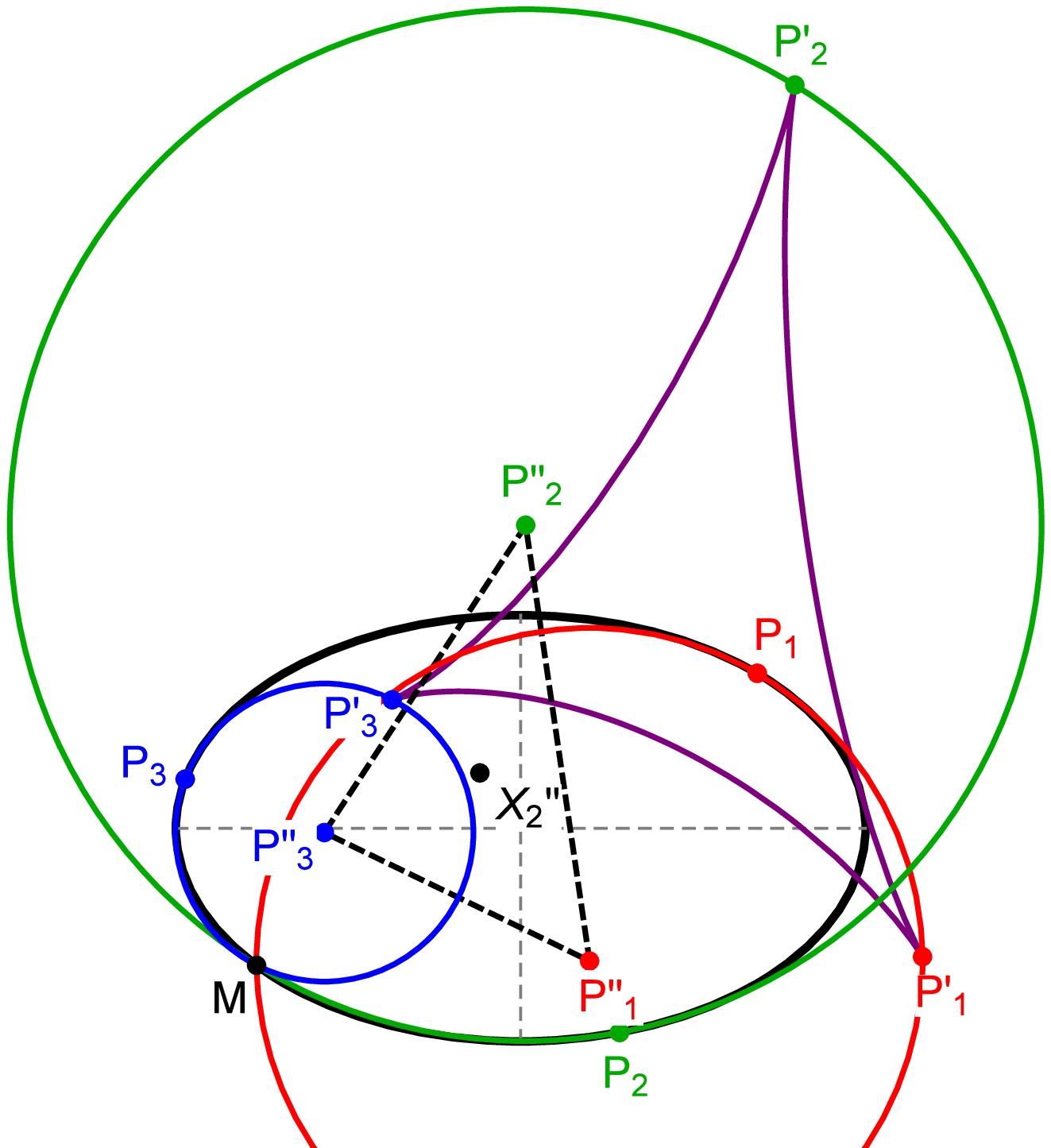}
    \caption{The circles passing through a cusp $P_i'$, its pre-image $P_i$, and $M$ osculate $\E$ at the $P_i$. The centers $P_i''$ of said circles define a triangle $T''$ (dashed black) whose area is constant for all $M$. $X_2''$ denotes its (moving) barycenter. \textbf{Video:} \cite[PL\#03,\#05]{playlist2020-deltoid}.}
    \label{fig:osculating}
\end{figure}

\begin{proof} The circle $\K_1$ passing through $M$, $P_1$ and $P_1'$ is given by

\[ \aligned \K_1(x,y)=& 2\,ab ({x}^{2}+{y}^{2})-4bc^2  \cos^3
 \left( \frac{u}{3 }  \right)    \; x-4ac^2    \sin^3 \left( \frac{u}{3 }\right)   y\\
+&ab \left( 3c^2\,\cos\left(  \frac{2u}{3 }\right)   -{a}^{2}-{b}^{2} \right) =0.
\endaligned \]

\noindent Recall a circle osculates an ellipse if its center lies on the evolute of said ellipse, given by \cite{stachel2019-conics}: 

\begin{equation}
\E^{*}(t)= \left[ \frac {  c^2\; \cos^3 t     }{a},-
\frac {{c}^{2}   \sin^3 t   }{b}\right]
\end{equation}

It is straightforward to verify that the center of $\K_1$ is $P_1''=\E^{*}(-\frac{u}{3})$.
A similar analysis can be made for $\K_2$ and $\K_3$.
\end{proof}

Since the area of $\E^*$ is $ A(\E^{*})=\frac{3 \pi c^4}{8{a}{b}}$, and the area of $\Delta$ is given in Equation~\ref{eqn:adelta}:

\begin{remark}
The area ratio of $\Delta$ and the interior of $\E^*$ is equal to $4/3$. \textcolor{red}{\smiley{}}
\end{remark}

\subsection{Why is \texorpdfstring{$\Delta$}{Δ} affine to Steiner's Curve}

Up to projective transformations, there is only one irreducible curve of degree 4 with 3 cusps. In a projective coordinate frame $(x_0:x_1:x_2)$ with the cusps as base points $(1:0:0)$, $(0:1:0)$ and $(0:0:1)$ and the common point of the cusps' tangents as unit point (1:1:1), the quartic has the equation

\[ x_0^2 x_1^2 + x_0^2 x_2^2 + x_1^2 x_2^2 - 2 x_0 x_1 x_2 (x_0+x_1+x_2) = 0 \]

At Steiner's three-cusped curve, the cusps form a regular triangle with the tangents passing through the center. Hence, whenever a three-cusped quartic has the meeting point of the cusps' tangents at the center of gravity of the cusps, it is affine to Steiner's curve, since there is an affine transformation sending the four points into a regular triangle and its center.

\section{The Cusp Triangle}
\label{sec:cusp_tri}
Recall $T'=P_1'P_2'P_3'$ is the triangle defined by the 3 cusps of $\Delta$.

\begin{proposition}
The area $A'$ of the cusp triangle $T'$ is invariant over $M$ and is given by:
 \[  A'=
\frac{27 \sqrt{3}}{16}\; \frac{ c^4 }{   a b}
\label{invAreaT'}
\]
\end{proposition}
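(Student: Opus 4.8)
The plan is to avoid the explicit cusp coordinates of Appendix~\ref{app:cusps} and instead read off $A'$ directly from the affine structure of Theorem~\ref{th:affine}. Since $\Delta_u = (U\circ V\circ D\circ R_u)\,\S$ and affine maps are diffeomorphisms, they carry cusps to cusps; hence the cusp triangle $T'$ of $\Delta_u$ is exactly the image under this composite map of the cusp triangle of the Steiner hypocycloid $\S$. Oriented area scales under an affine map by the determinant of its linear part, and this is precisely the constant Jacobian $c^4/(4ab)$ already computed in the proof of the area corollary (the translation $U$ and rotation $R_u$ contribute factor $1$, the homothety $D$ contributes $(\tfrac12 c^2)^2 = c^4/4$, and $V$ contributes $1/(ab)$). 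Because this factor does not depend on $u$, invariance over $M$ is immediate, and only a single triangle area remains to be found.

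That one computation is the area of the cusp triangle of $\S(t)=2(\cos t,-\sin t)+(\cos 2t,\sin 2t)$. First I would locate the cusps by solving $\S'(t)=0$: both coordinate equations factor and force $\cos t=-\tfrac12$ or $t=0$, giving cusps at $t=0,\,2\pi/3,\,4\pi/3$. Evaluating $\S$ at these parameters yields the three points $(3,0)$ and $(-\tfrac32,\pm\tfrac{3\sqrt3}{2})$, an equilateral triangle of side $3\sqrt3$, whose area is $\tfrac{\sqrt3}{4}(3\sqrt3)^2=\tfrac{27\sqrt3}{4}$.

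Multiplying the two ingredients then gives
\[
A' \;=\; \frac{27\sqrt3}{4}\cdot\frac{c^4}{4ab}\;=\;\frac{27\sqrt3}{16}\,\frac{c^4}{ab},
\]
as claimed, with invariance over $M$ inherited from the constancy of the Jacobian.

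There is essentially no obstacle here. The only point requiring a word of care is verifying that $U\circ V\circ D\circ R_u$ sends the cusps of $\S$ bijectively onto the cusps $P_i'$ of $\Delta_u$, so that no singular point is created or lost; this holds because an invertible affine map preserves the singular set of a parametrized curve. As an independent check one could substitute the explicit $P_i'$ from Appendix~\ref{app:cusps} into the shoelace formula and confirm that every $u$-dependent term cancels, but that route is strictly more laborious and yields no extra insight beyond the affine argument.
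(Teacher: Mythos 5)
Your proposal is correct and takes essentially the same route as the paper: both arguments multiply the constant Jacobian $c^4/(4ab)$ of the affine map in Theorem~\ref{th:affine} by the area $\tfrac{27\sqrt{3}}{4}$ of the equilateral cusp triangle of $\S$ (side $3\sqrt{3}$, circumradius $3$), with invariance over $M$ following from the constancy of that Jacobian. Your explicit solution of $\S'(t)=0$ to locate the cusps merely fills in a detail the paper leaves implicit.
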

\begin{proof}


The determinant of the Jacobian of the affine transformation in Theorem~\ref{th:affine} is $|J|=\frac{c^4}{4ab}$. Therefore, the area of $T'$ is simply $|J|A_e$, where $A_e$ is the area of an equilateral triangle inscribed in a circle of radius $3$ 
with side $3\sqrt{3}$.
\end{proof}

\noindent Referring to Figure~\ref{fig:deltoid_c2}:

\begin{proposition}
The barycenter $X_2'$ of $T'$ coincides with the center of area $C_2$ of $\Delta$.
\label{prop:c2-delta}
\end{proposition}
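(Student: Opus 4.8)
The plan is to avoid locating the cusps of $\Delta_u$ altogether and argue instead by affine covariance, leaning on Theorem~\ref{th:affine}. Write $\psi = U\circ V\circ D\circ R_u$ for the affine map with $\Delta_u=\psi(\S)$. Both quantities in the statement are affinely covariant. The barycenter of a finite point set is an affine combination of those points, so $\psi$ carries the barycenter of the cusps of $\S$ to the barycenter $X_2'$ of the cusps $P_i'$ of $\Delta_u$. Likewise, the center of area of a planar region is its vector of first moments divided by its area, and an affine map scales numerator and denominator consistently, so $\psi$ carries the area center of the region bounded by $\S$ to the area center $C_2$ of the region bounded by $\Delta_u$. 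It therefore suffices to show that, for the model curve $\S$, the cusp barycenter and the area center coincide.

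For $\S$ this is immediate from symmetry. Writing $\S(t)$ as the complex point $z(t)=2e^{-it}+e^{2it}$ (so that $x+iy = 2\cos t+\cos 2t + i(-2\sin t+\sin 2t)$), one checks $z(t+2\pi/3)=e^{-2\pi i/3}z(t)$, so $\S$ is invariant under the rotation $\rho$ by $120^\circ$ about the origin $O$. Solving $\S'(t)=0$ gives cusps at $t=0,2\pi/3,4\pi/3$; $\rho$ permutes these three cusps, hence fixes their barycenter, and $\rho$ maps the bounded region to itself, hence fixes that region's area center. Since $O$ is the only fixed point of a nontrivial rotation, both the cusp barycenter and the area center of $\S$ equal $O$.

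Applying $\psi$ then yields $X_2'=\psi(O)=C_2$, which is the claim; equivalently, both centers equal $\psi(O)$, so they equal each other. Note that this argument only uses the equality of $\Delta_u$ and $\psi(\S)$ as curves (and of the regions they bound), not any pointwise identity of parametrizations, so the cusp parameters of $\Delta_u$ never enter. As an independent cross-check, one could instead substitute the explicit cusp coordinates $P_i'$ from Appendix~\ref{app:cusps}, average the three, and verify termwise that the result equals $\bar C$ of Equation~\eqref{eqn:cbar} (which is $C_2$ by the preceding proposition); this reduces to a routine trigonometric simplification.

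The main, and only mild, obstacle is justifying the affine covariance of the \emph{area} center rather than of a finite point set: this is the step worth spelling out, since it is precisely what lets the three-fold symmetry of $\S$ transfer simultaneously to both distinguished centers of $\Delta_u$ and forces them to agree.
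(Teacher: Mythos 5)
Your proof is correct, but it takes a genuinely different route from the paper's. The paper disposes of Proposition~\ref{prop:c2-delta} by direct computation: its proof is literally ``direct calculations yield $X_2'=C_2$'', i.e., one averages the explicit cusp coordinates $P_i'$ of Appendix~\ref{app:cusps} and checks that the sum of the $\cos\frac{u}{3}$ and $\sin\frac{u}{3}$ terms cancels, leaving exactly $\bar C$ of Equation~\ref{eqn:cbar} --- so your closing ``independent cross-check'' is in fact the paper's actual proof. Your main argument instead transports everything to the model curve: $\psi=U\circ V\circ D\circ R_u$ is an invertible affine map (its linear part has determinant $c^4/(4ab)\neq 0$ since $a>b$), invertible affine maps carry cusps to cusps and the bounded region to the bounded region, and both the cusp barycenter and the region centroid are affinely equivariant (the centroid claim is the standard change-of-variables computation you rightly flag as the step to spell out); the three-fold symmetry $z(t+2\pi/3)=e^{-2\pi i/3}z(t)$ of $\S$ then forces both centers of $\S$ to be the origin, whence $X_2'=\psi(O)=C_2$. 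What your route buys: it is coordinate-free, it explains the coincidence rather than merely verifying it (the proposition becomes the affine-invariant statement that cusp centroid and area centroid agree for \emph{any} affine image of Steiner's curve, which resonates with the paper's later discussion of three-cusped quartics and the cusp-tangent meeting point), and it identifies both centers with $\psi(O)=\bar C$, re-deriving the preceding proposition $C_2=\bar C$ at no extra cost. What the paper's route buys: since the explicit $P_i'$ are wanted anyway for Appendix~\ref{app:cusps} and $\bar C$ was already computed, the verification is a two-line trigonometric average with nothing further to justify. Both are sound; yours is the more conceptual, the paper's the more economical given the machinery already on the table.
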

\begin{proof}
Direct calculations yield $X_2'=C_2.$
\end{proof}

\noindent Referring to Figure~\ref{fig:cusp_steiner}:

\begin{proposition}
The Steiner Ellipse $\E'$ of $T'$ has constant area and is a scaled version of $\E$ rotated $90^\circ$ about $O$.
\end{proposition}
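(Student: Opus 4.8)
The plan is to exploit the affine covariance of the Steiner circumellipse together with the explicit affine map of Theorem~\ref{th:affine}. Recall that for any triangle and any invertible affine map $\phi$, the Steiner circumellipse of $\phi(T)$ equals $\phi$ applied to the Steiner circumellipse of $T$: this holds because the Steiner ellipse is the unique circumscribed ellipse centered at the centroid, and both ``circumscribed'' and ``centered at the centroid'' are affinely invariant notions. Moreover, for an equilateral triangle the Steiner circumellipse is simply its circumcircle.

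First I would locate the cusps of $\S(t)=(2\cos t+\cos 2t,\,-2\sin t+\sin 2t)$ by solving $\S'(t)=0$, obtaining $t=0,\tfrac{2\pi}{3},\tfrac{4\pi}{3}$ and the cusp points $(3,0)$ and $(-\tfrac32,\mp\tfrac{3\sqrt3}{2})$. These lie on the circle of radius $3$ about $O$ and have centroid $O$, so they form an equilateral triangle whose Steiner circumellipse is exactly that circle, call it $\S^\ast$.

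Next, since the affine map $A=U\circ V\circ D\circ R_u$ of Theorem~\ref{th:affine} carries $\S$ onto $\Delta_u$ and invertible affine maps send cusps to cusps, $A$ carries the cusp triangle of $\S$ onto $T'$; by affine covariance, $\E'=A(\S^\ast)$. Now I would track $\S^\ast$ through the composition: $R_u$ fixes $\S^\ast$ (a circle about $O$ is rotation invariant), $D$ scales it to the circle of radius $\tfrac{c^2}{2}\cdot 3=\tfrac{3c^2}{2}$, the linear map $V\colon(x,y)\mapsto(x/a,y/b)$ turns this into the axis-aligned ellipse with semi-axes $\tfrac{3c^2}{2a}$ and $\tfrac{3c^2}{2b}$, and $U$ merely translates its center to $\bar C$ (consistent with the Steiner ellipse being centered at the centroid $X_2'=C_2=\bar C$, cf. Proposition~\ref{prop:c2-delta}). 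The crucial observation is that the \emph{only} $u$-dependence, namely $R_u$, acts trivially on $\S^\ast$; hence the shape, orientation, and area of $\E'$ are independent of $u$.

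Finally I would read off the conclusion. The semi-axis ratio of $\E'$ is $\tfrac{3c^2/(2a)}{3c^2/(2b)}=\tfrac{b}{a}$, the reciprocal of that of $\E$, so $\E'$ is $\E$ rotated $90^\circ$ about $O$ and scaled by $\lambda=\tfrac{3c^2}{2ab}$ (then recentered at $\bar C$); its area is $\pi\cdot\tfrac{3c^2}{2a}\cdot\tfrac{3c^2}{2b}=\tfrac{9\pi c^4}{4ab}$, manifestly constant. I expect no genuine obstacle beyond invoking the two affine facts cleanly; the one point needing care is confirming that $A$ matches the three cusps of $\S$ with the three cusps $P_i'$ of $\Delta_u$, so that it really maps the correct triangle, which is immediate since $A$ is a bijection of curves preserving singular points. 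A direct check against the explicit cusp coordinates in Appendix~\ref{app:cusps} could serve as independent verification.
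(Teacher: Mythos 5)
Your proof is correct and reaches exactly the paper's conclusions (semi-axes $b'=\frac{3c^2}{2a}$ and $a'=\frac{3c^2}{2b}$, hence area $\frac{9\pi c^4}{4ab}$), but by a genuinely different route. The paper's proof is computational: it characterizes $\E'$ as the conic through the cusps centered at $C_2=X_2'$, derives its implicit equation by direct calculation from the explicit cusp coordinates, and reads off the semi-axes. You instead combine affine covariance of the Steiner circumellipse, the fact that for an equilateral triangle it is the circumcircle, and the factorization of Theorem~\ref{th:affine}, so that $\E'$ is the image of the radius-$3$ circle $\S^\ast$ under $U\circ V\circ D$ alone; the invariance over $M$ becomes structurally transparent (the only shape-affecting $u$-dependence is a rotation acting on a circle), at the cost of not producing the implicit equation of $\E'$ that the paper records. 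One step does need care, as you anticipated: the cusp matching. As printed, the map of Theorem~\ref{th:affine} with rotation $R_u$ does not carry the cusps of $\S$ (at parameters $0,\tfrac{2\pi}{3},\tfrac{4\pi}{3}$) onto the cusps listed in Appendix~\ref{app:cusps}; matching those coordinates requires the rotation through $u/3$ counterclockwise together with the parameter shift $t\mapsto t+\tfrac{u}{3}$, consistent with the singular parameters $t_i=-\tfrac{u}{3}-(i-1)\tfrac{2\pi}{3}$ appearing in the proof of Theorem~\ref{th:circle_5}. Your argument survives this untouched --- any rotation about $O$ fixes $\S^\ast$, so $\E'=(U\circ V\circ D)(\S^\ast)$ and all conclusions follow whatever the correct rotation angle is --- but the sentence claiming the matching is ``immediate'' should be replaced by exactly the check against Appendix~\ref{app:cusps} that you propose as independent verification.
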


\begin{proof} $\E'$ passes through the vertices of $T'$ and is centered on $C_2=X_2'$. Direct calculations yield the following implicit equation for it:
	
\[ \aligned   {a}^{2}{x}^{2}+  {b}^{2}{y}^{2} + \left( {a}^{2}+{b}^{2} \right) \left(  a\cos
	u   x+  b \sin u y\right)- \left( {a}^{2}-2\,{
		b}^{2} \right)  \left( 2\,{a}^{2}-{b}^{2} \right)=0
	\endaligned
\]
Its semi-axes are $b'=\frac{3c^2}{2a}$ and $a'=\frac{3c^2}{2b}$. Therefore $\E'$ is similar to a $90^\circ$-rotated copy of $\E$.
\end{proof}
\begin{remark}
This proves that $T'$ can never be regular and $\Delta $ has never a three-fold symmetry.\end{remark}

\begin{corollary}
The ratio of area of $\E'$ and $\E$ is given by $\frac{9}{4}\frac{c^4}{a^2 b^2}$, and at $a/b=(1+\sqrt{10})/3$, the two ellipses are congruent.
\end{corollary}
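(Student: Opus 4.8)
The plan is to read the area of $\E'$ directly off its semi-axes, which were just computed in the preceding proposition, and then to reduce the congruence question to a single scalar equation. First I would record that the previous proposition gives the semi-axes of $\E'$ as $a'=\tfrac{3c^2}{2b}$ and $b'=\tfrac{3c^2}{2a}$, so that
\[
A(\E')=\pi a' b' = \pi\cdot\frac{3c^2}{2b}\cdot\frac{3c^2}{2a}=\frac{9\pi c^4}{4ab}.
\]
Dividing by the area $A(\E)=\pi a b$ of the main ellipse immediately yields the stated ratio $A(\E')/A(\E)=\tfrac{9c^4}{4a^2b^2}$.

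For the congruence claim, the key observation is that $\E'$ and $\E$ are \emph{similar}: their axis ratios coincide, since $a'/b'=(3c^2/2b)/(3c^2/2a)=a/b$. This is exactly the content of the phrase ``$\E'$ is similar to a $90^\circ$-rotated copy of $\E$'' in the previous proof---the $90^\circ$ rotation merely swaps which coordinate direction carries the major axis, leaving the axis ratio unchanged. Because two similar ellipses are congruent precisely when they share the same area (equivalently, the same linear scale), the congruence of $\E'$ and $\E$ is equivalent to the area ratio equalling $1$.

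It then remains to solve $\tfrac{9c^4}{4a^2b^2}=1$. Writing $c^2=a^2-b^2$ and setting $r=a/b$, this becomes $3(r^2-1)=2r$, i.e. the quadratic $3r^2-2r-3=0$, whose positive root is $r=\tfrac{1+\sqrt{10}}{3}$; the other root $\tfrac{1-\sqrt{10}}{3}$ is negative and hence discarded since $a,b>0$. This is precisely the asserted aspect ratio. I expect no real obstacle: the one point requiring care is the justification that equal area forces congruence---false for arbitrary ellipses but valid here by the similarity established above---while the remaining steps are elementary algebra.
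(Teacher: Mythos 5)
Your proposal is correct and follows essentially the same route the paper intends: the corollary is stated without proof precisely because it reads off the semi-axes $a'=\tfrac{3c^2}{2b}$, $b'=\tfrac{3c^2}{2a}$ from the preceding proposition, giving $A(\E')/A(\E)=\tfrac{9c^4}{4a^2b^2}$, and reduces congruence to this ratio equalling $1$ via the already-established similarity. Your explicit remark that equal area implies congruence only \emph{because} the two ellipses are similar is a worthwhile point of rigor that the paper leaves implicit.
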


\begin{figure}
    \centering
    \includegraphics[width=\textwidth]{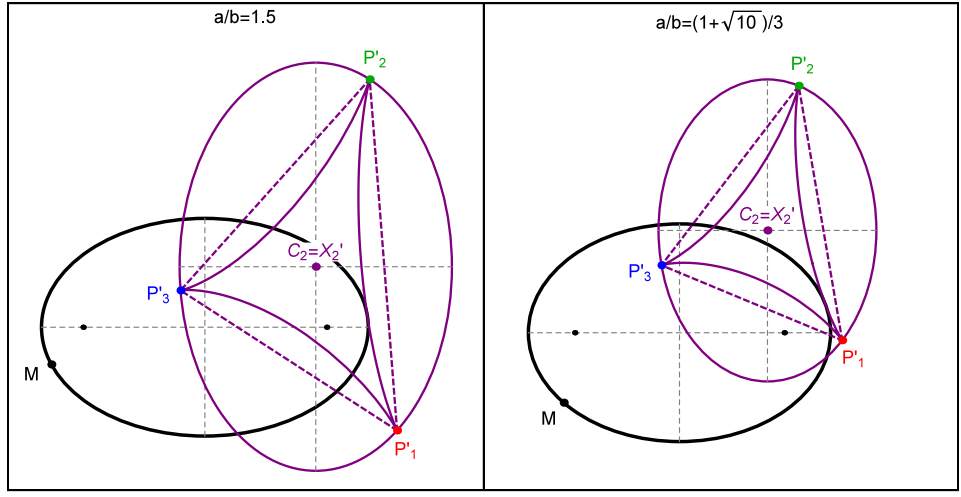}
    \caption{\textbf{Left}: The Steiner Ellipse $\E'$ of triangle $T'$ defined by the $P_i'$ is a scaled-up and $90^\circ$-rotated copy of $\E$. \textbf{Right:} At $a/b=(1+\sqrt{10})/3{\simeq}1.38743$, $\E'$ and $\E$ have the same area.}
    \label{fig:cusp_steiner}
\end{figure}

\begin{proposition}
 The Steiner Point $X_{99}'$ of $T'$ is given by: 
 
  \[ X_{99}'(u) = \left[ \frac { \left( {a}^{2}-2\,{b}^{2} \right)  } {a}\cos u
,- \frac { \left( 2\,{a}^{2}-{b}^{2} \right) }{
b} \sin u  \right]
\]
\end{proposition}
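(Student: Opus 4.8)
The plan is to invoke the classical characterization used already for $T$ in Figure~\ref{fig:preimg_tri}: the Steiner point $X_{99}$ of a triangle is the unique fourth intersection, besides the three vertices, of its circumcircle with its Steiner circumellipse. Since the Steiner ellipse $\E'$ of $T'$ is already in hand from the previous proposition, the task reduces to three checks: (i) produce the circumcircle $\K'$ of $T'$; (ii) verify that the claimed point $X_{99}'(u)$ lies on both $\E'$ and $\K'$; and (iii) confirm that for generic $u$ it is distinct from the three cusps, so that it is genuinely the non-vertex common point and hence $X_{99}'$.

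First I would dispatch the easy half, membership on $\E'$. Substituting $x=\tfrac{a^2-2b^2}{a}\cos u$ and $y=-\tfrac{2a^2-b^2}{b}\sin u$ into the implicit equation of $\E'$, the quadratic term $a^2x^2$ together with the linear term $(a^2+b^2)a\cos u\,x$ makes the $\cos^2 u$ coefficient collapse to $(a^2-2b^2)(2a^2-b^2)$, and $b^2y^2$ with $(a^2+b^2)b\sin u\,y$ makes the $\sin^2 u$ coefficient collapse to the same factor; then $\cos^2 u+\sin^2 u=1$ cancels the constant term exactly, giving $\E'(X_{99}')=0$ identically.

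Next I would obtain $\K'$. Taking the three cusps $P_i'$ from Appendix~\ref{app:cusps} (evaluated at $t_i=-\tfrac{u}{3}-\tfrac{2\pi}{3}(i-1)$, they have the shape $\big(\tfrac1a(2c^2\cos^3\theta_i-a^2\cos u),\,\tfrac1b(\ldots)\big)$ with $\theta_i=\tfrac{u}{3}+\tfrac{2\pi}{3}(i-1)$), I would either solve $x^2+y^2+Dx+Ey+F=0$ for $(D,E,F)$ at the three cusps, or equivalently expand the $4\times4$ concyclicity determinant of $X_{99}',P_1',P_2',P_3'$ and show it vanishes. I expect the main obstacle here to be purely the simplification: the cusps carry odd powers of $\cos(u/3)$ and $\sin(u/3)$, so one must repeatedly apply triple-angle identities to re-express everything in $\cos u,\sin u$; after that, substituting the candidate into $\K'$ reduces to a polynomial identity in $\cos u,\sin u$. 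As an alternative that trades the circumcircle for side-length algebra, one could instead compute the squared side lengths of $T'$ and form the barycentric combination $\big(\tfrac{1}{(b')^2-(c')^2}:\tfrac{1}{(c')^2-(a')^2}:\tfrac{1}{(a')^2-(b')^2}\big)$ of the cusps; this avoids $\K'$ but is comparably heavy.

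Finally I would address the degenerate positions. The candidate coincides with a vertex precisely at the axial-symmetry cases $u\in\{0,\tfrac{\pi}{2},\pi,\tfrac{3\pi}{2}\}$, where $T'$ is isosceles and the Steiner point merges onto the apex (for $u=0$ one checks directly that $X_{99}'(0)=P_1'=(\tfrac{a^2-2b^2}{a},0)$, consistent with $\K'$ and $\E'$ being tangent there). For all other $u$ the candidate is the unique non-vertex intersection of $\K'$ and $\E'$, hence the Steiner point; and since $X_{99}'(u)$ together with the two membership conditions is analytic in $u$, the identity proved for generic $u$ extends to every $u$ by continuity.
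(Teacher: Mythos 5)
Your proposal is correct and follows essentially the same route as the paper: both use the characterization of $X_{99}'$ as the non-vertex intersection of the circumcircle $\K'$ of $T'$ with its Steiner ellipse $\E'$, compute $\K'$ from the cusps, and reduce the claim to checking that the stated point satisfies both conic equations. Your additional care about the isosceles positions $u\in\{0,\tfrac{\pi}{2},\pi,\tfrac{3\pi}{2}\}$ (where the fourth intersection merges with a cusp) and the continuity argument is a refinement the paper's terse ``straightforward calculations'' omits, but it does not change the method.
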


\begin{proof}
By definition $X_{99}$ is the intersection of the circumcircle of $T$ ($\K$) with  the Steiner ellipse.
 The Circumcircle $\K'$ of the triangle 
     $T'=\{P_1',P_2', P_3'\}$ is given by:
     \begin{align*}
\label{eq:cc}
    \K'(x,y)=&8\,{a}^{2}{b}^{2} \left( {x}^{2}+{y}^{2} \right) \\
     +&2\,a\cos u \left( 3\,{a}^{4}-2\,{a}^{2}{b}^{2}+7\,{b}^{4} \right) x 
     + 2\,b\sin u \left( 7\,{a}^{4}-2\,{a}^{2}{b}^{2}+3\,{b}^{4}
 \right) y\\
 -& \left( {a}^{2}+{b}^{2} \right)  \left(  c^2  \left( {a}^{2}+{b}^{
2} \right) \cos   2\,u
  +5\,{a}^{4}-14\,{a}^{2}{b}^{2}+5\,{b}^{4} \right) =0.
     \end{align*}

With the above, straightforward calculations lead to the coordinates of $X_{99}'$.
\end{proof}

\section{The Triangle of Cusp Pre-Images}
\label{sec:preimg_tri}
Recall $T={P_1}{P_2}{P_3}$ is the triangle defined by pre-images on $\E$ to each cusp of $\Delta$. 

\begin{proposition}
The barycenter $X_2$ of $T$ is stationary at $O$, i.e., $\E$ is is Steiner Ellipse \cite{mw}.
\end{proposition}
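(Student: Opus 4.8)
The plan is to read off the barycenter directly from the coordinates of the cusp pre-images and reduce everything to a single trigonometric identity. From the proof of Theorem~\ref{th:circle_5} the singular parameters of $\Delta_u$ are $t_i = -\tfrac{u}{3} - \tfrac{2(i-1)\pi}{3}$ for $i=1,2,3$, and the pre-images are $P_i = (a\cos t_i,\, b\sin t_i)$. Hence the barycenter is
\[
X_2 = \frac{1}{3}\sum_{i=1}^3 P_i = \left(\frac{a}{3}\sum_{i=1}^3\cos t_i,\ \frac{b}{3}\sum_{i=1}^3\sin t_i\right),
\]
so the entire claim comes down to showing that both coordinate sums vanish for every $u$.

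The key step is the observation that $t_1,t_2,t_3$ are three angles equally spaced by $2\pi/3$ around the circle. I would package the two real sums as one complex sum: writing $\zeta = e^{-2\pi i/3}$,
\[
\sum_{i=1}^3 e^{\,i t_i} = e^{-iu/3}\bigl(1+\zeta+\zeta^2\bigr) = 0,
\]
since $1,\zeta,\zeta^2$ are the three cube roots of unity. Taking real and imaginary parts gives $\sum\cos t_i = \sum\sin t_i = 0$, and therefore $X_2 = O$ identically in $u$. This establishes that the barycenter is stationary at the center of $\E$.

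For the second assertion I would invoke the defining characterization of the Steiner circumellipse: it is the unique conic passing through the three vertices of a triangle and centered at its centroid. By construction the three pre-images $P_i$ lie on $\E$, and we have just shown that their centroid is $O$, the center of $\E$. Since $\E$ is a conic through $P_1,P_2,P_3$ centered at their barycenter, uniqueness forces $\E$ to be exactly the Steiner circumellipse of $T$.

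There is no substantial obstacle here; the computation is short once the singular parameters from Theorem~\ref{th:circle_5} are available. The only points requiring care are (i) justifying the trigonometric cancellation cleanly---the roots-of-unity packaging avoids any case-by-case angle addition---and (ii) correctly citing the centroid-centered uniqueness property that singles out the Steiner circumellipse, rather than merely noting that $\E$ happens to pass through the three vertices.
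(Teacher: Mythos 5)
Your proof is correct, but it is organized differently from the paper's. The paper's argument is a one-liner by affine invariance: the map $(x,y)\mapsto(ax,by)$ sends the unit circle to $\E$ and sends the equilateral triangle with vertices at angles $t_i=-\tfrac{u}{3}-(i-1)\tfrac{2\pi}{3}$ (centered at $O$) to $T$; since affine maps preserve centroids and carry Steiner ellipses to Steiner ellipses, both claims follow at once. You instead verify the centroid claim by direct computation (the roots-of-unity cancellation $1+\zeta+\zeta^2=0$, which is exactly the reason the equilateral pre-image is centered at $O$) and then handle the Steiner-ellipse claim by the uniqueness of the centroid-centered circumconic. Both routes hinge on the same fact --- the equal $2\pi/3$ spacing of the parameters $t_i$ --- but they trade different background facts: the paper's version requires knowing that centroid and Steiner ellipse are affine-covariant notions, and it meshes nicely with the subsequent Poncelet-family proposition, which reuses the same scaling map; your version is more self-contained at the cost of invoking the (standard) characterization of the Steiner circumellipse as the unique conic through the vertices centered at the centroid, which you are right to cite explicitly rather than merely observing that $\E$ is some circumconic. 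One cosmetic point: in $\sum e^{\,i t_i}$ you use $i$ both as the imaginary unit and as the summation index; rename the index to avoid the clash.
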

\begin{proof} The triangle $T$ is an affine image of an equilateral triangle with center at $0$  and $P_i=\E(t_i)=\E(-\frac{u}{3}-(i-1)\frac{2\pi}{3})$. So the result follows. 
\end{proof}

\begin{remark}
$M$ is the Steiner Point $X_{99}$ of $T$.
\end{remark}

\begin{proposition}
Over all $M$, the $T$ are an $N=3$ Poncelet family with external conic $\E$ with the Steiner Inellipse of $T$ as its caustic \cite{mw}. Futhermore the area of these triangles is invariant and equal to $\frac{3\sqrt{3}ab}{4}$.
\end{proposition}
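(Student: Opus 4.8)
The plan is to transport the entire configuration back to the unit circle via the linear map $V^{-1}(x,y)=(ax,by)$ (the inverse of the map $V$ from Theorem~\ref{th:affine}), where the Poncelet statement becomes elementary, and then push it forward. First I would record that each pre-image is $P_i=(a\cos t_i,\,b\sin t_i)=V^{-1}(\cos t_i,\sin t_i)$ with $t_i=-\tfrac{u}{3}-(i-1)\tfrac{2\pi}{3}$, so the three angles are spaced exactly $2\pi/3$ apart. Hence $(\cos t_i,\sin t_i)$ are the vertices of an equilateral triangle $T_0$ inscribed in the unit circle $S^1$, and $T=V^{-1}(T_0)$. Varying $u$ merely rotates $T_0$ rigidly inside $S^1$.

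Next I would invoke the classical concentric-circle porism. An equilateral triangle inscribed in a circle of radius $R$ has inradius $R/2$ (equivalently, Euler's relation $d^2=R^2-2Rr$ with $d=0$ forces $r=R/2$), so $T_0$ is simultaneously inscribed in $S^1$ and circumscribed about the concentric circle $S_0$ of radius $1/2$; as $u$ varies the rotated copies sweep out the trivial $N=3$ Poncelet family of the pair $(S^1,S_0)$. Since a single affine map preserves incidence, tangency, and closure, applying $V^{-1}$ transfers the whole porism: the triangles $T=V^{-1}(T_0)$ are inscribed in $V^{-1}(S^1)=\E$ and circumscribed about the ellipse $V^{-1}(S_0)$, which is concentric, axis-aligned, and has semi-axes $a/2,b/2$. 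This is exactly the asserted Poncelet family with external conic $\E$.

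To identify the caustic with the Steiner inellipse of $T$, I would use the midpoint-tangency characterization. The incircle $S_0$ touches the sides of the equilateral $T_0$ at their midpoints; affine maps carry midpoints to midpoints and preserve tangency, so $V^{-1}(S_0)$ is tangent to the sides of $T$ at their midpoints. As the Steiner inellipse is the unique inconic with this property, $V^{-1}(S_0)$ is the Steiner inellipse of $T$; being independent of $u$, it is stationary and half the size of $\E$, consistent with the earlier proposition that $\E$ is the Steiner (circum)ellipse of $T$.

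Finally, for the area: the equilateral triangle inscribed in $S^1$ has side $\sqrt3$ and area $\tfrac{3\sqrt3}{4}$, and $\det V^{-1}=ab$ is independent of $u$, so $A(T)=ab\cdot\tfrac{3\sqrt3}{4}=\tfrac{3\sqrt3\,ab}{4}$ for every $M$. The only point requiring care — the main obstacle, such as it is — is justifying that one fixed map $V^{-1}$ carries not a single triangle but the entire rotated one-parameter family together with its closure and tangency; this holds because $V^{-1}$ commutes with the rotation generating the family, so the elementary concentric-circle porism transfers verbatim and every member of the family is automatically both inscribed and circumscribed.
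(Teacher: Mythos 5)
Your proposal is correct and follows essentially the same route as the paper: both pull the family back to the pair of concentric circles of radius $1$ and $\tfrac12$, observe that the rotating equilateral triangles form the trivial $N=3$ Poncelet family there, and push everything forward by the map $(x,y)\mapsto(ax,by)$, with the area coming from the determinant $ab$. You simply make explicit some steps the paper leaves implicit, notably the midpoint-tangency characterization identifying the image of the small circle as the Steiner inellipse.
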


\begin{proof}
The pair of concentric circles of radius $1$ and $1/2$ is associated with a Poncelet 1d family of equilaterals. The image of this family by the map $(x,y)\rightarrow(ax,by)$ produces the original pair of ellipses, with the stated area. Alternatively, the ratio of areas of a triangle to its Steiner Ellipse is known to be $3\sqrt{3}/(4\pi)$ \cite[Steiner Circumellipse]{mw} which yields the area result. 
\end{proof}

\section{Locus of the Cusps}
\label{sec:cusp_locus}
We analyze the motion of the cusps $P_i'$ of Steiner's Hat $\Delta$ with respect to continuous revolutions of $M$ on $\E$. Referring to Figure~\ref{fig:cusp-loci}:

\begin{remark}
The locus $\mathcal{C}(u)$ of the cusps of $\Delta$ is parametrized by:

\begin{align} \mathcal{C}(u): \frac{3c^2}{2} \left[ \frac{1}{a}\cos\frac{u}{3}  ,\frac{1}{b}\sin\frac{u}{3}\right]-\frac{a^2+b^2}{2} \left[ \frac{1}{a}\cos u ,\frac{1}{b}\sin u\right]
\end{align}

This is a curve of degree 6, with the following implicit equation:

{\scriptsize
\begin{align*}
-&4\,{a}^{6}{x}^{6}-4\,{b}^{6}{y}^{6}-12\,{a}^{2}{x}^{2}{b}^{2}{y}^{2}
 \left( {a}^{2}{x}^{2}+{b}^{2}{y}^{2} \right)\\ +&12\,{a}^{4} \left( {a}^
{4}-{a}^{2}{b}^{2}+{b}^{4} \right) {x}^{4}+12\,{b}^{4} \left( {a}^{4}-
{a}^{2}{b}^{2}+{b}^{4} \right) {y}^{4}+24\,{a}^{2}{b}^{2} \left( {a}^{
4}-{a}^{2}{b}^{2}+{b}^{4} \right) {x}^{2}{y}^{2}\\
-&3\,{a}^{2} \left( 2\,
{a}^{2}-{b}^{2} \right)  \left( {a}^{2}+{b}^{2} \right)  \left( 2\,{a}
^{4}-5\,{a}^{2}{b}^{2}+5\,{b}^{4} \right) {x}^{2}\\
+&3\,{b}^{2} \left( {a
}^{2}-2\,{b}^{2} \right)  \left( {a}^{2}+{b}^{2} \right)  \left( 5\,{a
}^{4}-5\,{a}^{2}{b}^{2}+2\,{b}^{4} \right) {y}^{2}\\
+& \left( 2\,{a}^{2}-
{b}^{2} \right) ^{2} \left( {a}^{2}-2\,{b}^{2} \right) ^{2} \left( {a}
^{2}+{b}^{2} \right) ^{2}
 =0
\end{align*}
}

    %
\end{remark}

\begin{proposition}
It can be shown that over one revolution of $M$ about $\E$, $C_2$ will cross the ellipse on four locations $W_j,j=1,\cdots,4$ given by:

\[ W_j= \frac{1}{2 \sqrt{a^2+b^2}}\left(\pm a \sqrt{a^2+3 b^2} ,\pm b \sqrt{3 a^2+b^2} \right) \]

At each such crossing, $C_2$ coincides with one of the pre-images.
\end{proposition}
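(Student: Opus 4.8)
The plan is to split the statement into two independent claims: the location of the four crossings, which is a direct elimination, and the coincidence with a pre-image, which follows from the concyclicity already established in Theorem~\ref{th:circle_5}.

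First I would determine the locus of $C_2$. Writing $C_2=\bar C$ with the coordinates of Equation~\eqref{eqn:cbar} and eliminating $u$ via $\cos^2 u+\sin^2 u=1$ gives the concentric, axis-aligned ellipse $4a^2x^2+4b^2y^2=(a^2+b^2)^2$, which $C_2(u)$ traces once as $u$ runs over $[0,2\pi)$. To find where it meets $\E$, I would solve it simultaneously with $b^2x^2+a^2y^2=a^2b^2$. Since both conics are even in $x$ and in $y$, setting $X=x^2$, $Y=y^2$ turns this into a $2\times2$ linear system whose solution is $X=\tfrac{a^2(a^2+3b^2)}{4(a^2+b^2)}$ and $Y=\tfrac{b^2(3a^2+b^2)}{4(a^2+b^2)}$. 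Both are positive for $a>b>0$, so the four sign choices $(\pm\sqrt X,\pm\sqrt Y)$ produce exactly the four real points $W_j$ in the stated form. That there are genuinely four real crossings (hence four over one revolution of $M$) is consistent with the semi-axis comparison $\tfrac{a^2+b^2}{2a}<a$ and $\tfrac{a^2+b^2}{2b}>b$: the $C_2$-ellipse is narrower but taller than $\E$, so it pokes out top and bottom while staying inside left and right.

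For the coincidence claim I would argue synthetically rather than by substitution. By Theorem~\ref{th:circle_5} the point $C_2$ lies on the circle $\K$ of Equation~\eqref{eq:circulo} for every $u$, together with $M,P_1,P_2,P_3$; and $M,P_1,P_2,P_3$ all lie on $\E$. A circle and an ellipse meet in at most four points, so (when these four are distinct) $\K\cap\E=\{M,P_1,P_2,P_3\}$ exactly. At a crossing we additionally have $C_2\in\E$, hence $C_2\in\K\cap\E=\{M,P_1,P_2,P_3\}$. Because $MC_2$ is a diameter of $\K$ (as shown in the proof of Theorem~\ref{th:circle_5}) and $M=C_2$ would force $\cos u=\sin u=0$, one has $C_2\neq M$ for all $u$; therefore $C_2$ must be one of the pre-images $P_1,P_2,P_3$.

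The main obstacle is the distinctness hypothesis in the degree count: the ``at most four'' bound only pins down $\K\cap\E$ when $M,P_1,P_2,P_3$ are four distinct points at the crossing values of $u$. I would close this either by checking that the four values of $u$ solving $C_2(u)=W_j$ avoid the finitely many configurations in which two of $M,P_1,P_2,P_3$ collide, or, unconditionally but less elegantly, by substituting each such $u$ into $C_2$ and into $P_i=(a\cos t_i,b\sin t_i)$ with $t_i=-\tfrac{u}{3}-(i-1)\tfrac{2\pi}{3}$ and matching coordinates directly. The synthetic route is cleaner; the computational route removes the caveat at the cost of a short but tedious verification.
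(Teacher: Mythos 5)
Your proposal is correct and follows essentially the same route as the paper: a direct elimination to obtain the four points $W_j$, followed by the concyclicity of $M,C_2,P_1,P_2,P_3$ from Theorem~\ref{th:circle_5} combined with the at-most-four-intersections bound for the circle $\K$ and the ellipse $\E$, plus $C_2\neq M$ (diameter argument), to force $C_2$ to equal some $P_i$. The distinctness obstacle you flag is closed in the paper exactly as you suggest: one checks that $M$ coincides with a pre-image only when $M$ is at a vertex of $\E$, and at those positions $C_2\notin\E$, so no crossing occurs there.
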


\begin{proof}
From the coordinates of $C_2$ given in equation~\eqref{eqn:cbar} in terms of the parameter $u$, one can derive an equation that is a necessary and sufficient condition for $C_2\in \E$ to happen, by substituting those coordinates in the ellipse equation $x^2/a^2+y^2/b^2-1=0$. Solving for $\sin u$ and substituting back in the coordinates of $C_2$, one easily gets the four solutions $W_1,W_2,W_3,W_4$.

Now, assume that $C_2\in \E$. The points $M,P_1,P_2,P_3,C_2$ must all be in both the ellipse $\E$ and the circumcircle $\K$ of $P_1P_2P_3$. Since the two conics have at most 4 intersections (counting multiplicity), 2 of those 5 points must coincide. It is easy to verify from the previously-computed coordinates that $M$ can only coincide with the preimages $P_1,P_2,P_3$ at the vertices of $\E$. In such cases, owing to the symmetry of the geometry about either the x- or y-axis, the circle $\K$ must be tangent to $\E$ at $M$. Thus, that intersection will count with multiplicity (of at least) 2, so another pair of those 5 points must also coincide. Since $P_1,P_2,P_3$ must all be distinct, $C_2$ will coincide with one of the preimages. However, this can never happen, since if $M$ is on one of the vertices of $\E$, $C_2$ won't be in $\E$.
In any other case, since $P_1,P_2,P_3$ must be distinct and $C_2$ is diametrically opposed to $M$ in $\K$, $C_2$ must coincide with one of the preimages.
\end{proof}

\begin{remark}
$C_2$ will visit each of the preimages cyclically. Moreover, upon 3 revolutions (with 12 crossings in total), each $P_i$ will have been visited four times and the process repeats. 
\end{remark}

\begin{figure}[H]
    \centering
    \includegraphics[width=\textwidth]{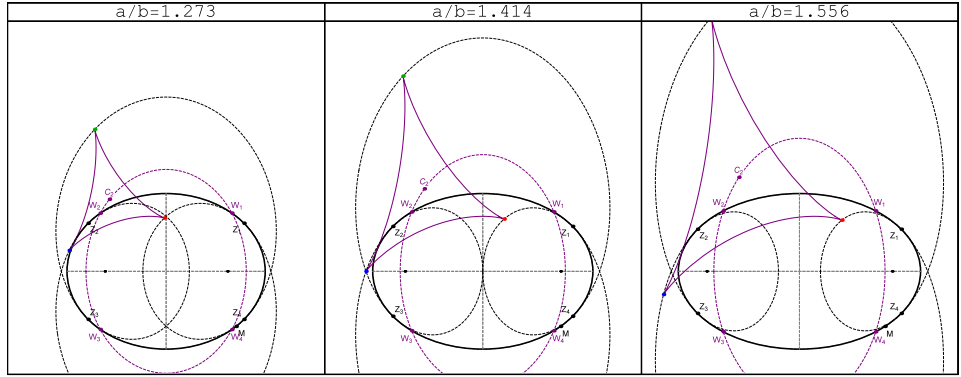}
    \caption{The loci of the cusps of $\Delta$ (dashed line) is a degree-6 curve with 2 internal lobes with either 2, 3, or 4 self-intersections. From left to right, $a/b=\{1.27,\sqrt{2},1.56\}$. Note that at $a/b=\sqrt{2}$ the two lobes touch, i.e., the cusps pass through the center of $\E$. Also shown is the elliptic locus of $C_2$ (purple). Points $Z_i$ (resp. $W_i$) mark off the intersection of the locus of the cusps (resp. of $C_2$) with $\E$. These never coincide \textbf{Video:} \cite[PL\#04]{playlist2020-deltoid}.} 
    \label{fig:cusp-loci}
\end{figure}

\subsection{Tangencies and Intersections of the Deltoid with the Ellipse}
\label{sec:deltoid_tangs}
\begin{definition}[Apollonius Hyperbola]
Let $M$ be a point on an ellipse $\E$ with semi-axes $a,b$. Consider a hyperbola $\H$, known as the Apollonius Hyperbola of $M$ \cite{hartmann2010-apollonius}:

\[ \H: \langle (x,y)-M, (y/b^2,-x/a^2 )\rangle=0. \] 

\end{definition}

Notice that for $P$ on $\E$, only the points for which the normal at $P$ points to $M$ will lie on $\H$. See also \cite[page 403]{stachel2019-conics}.

Additionally, $\H$ passes through $M$ and $O$, and its asymptotes are parallel to the axes of $\E$.  
 
\begin{proposition}\label{prop:3tan}
$\Delta$ is tangent to $\E$ at $\E\cap\H$, at 1, 2 or 3 points $Q_i$ depending on whether $M$ is exterior or interior to the evolute $\E^*$. 
\end{proposition}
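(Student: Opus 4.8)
The plan is to translate the tangency claim into a statement about the normals of $\E$ through $M$, and then to count those normals using the classical theory of the evolute. First I would record what $\H$ means pointwise. For $P=P(t)=(a\cos t,b\sin t)$ the vector $(y/b^2,-x/a^2)$ in the definition of $\H$ is orthogonal to the gradient $(x/a^2,y/b^2)$ of $\E$, hence tangent to $\E$ at $P$. Therefore $P\in\H$ exactly when $P-M$ is orthogonal to this tangent, i.e. when the normal line of $\E$ at $P$ passes through $M$. Since the generating line $L(t)$ is by construction orthogonal to $P(t)-M$ and passes through $P(t)$, this is the same as saying $L(t)$ is the tangent to $\E$ at $P(t)$. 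So $\E\cap\H$ is the set of feet of normals from $M$, and at each such foot the generating line is tangent to $\E$.

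The crux is to show that at these feet the envelope $\Delta$ actually touches $\E$. Set $f(X,t)=\langle X-P(t),\,P(t)-M\rangle$, so $L(t)=\{f(\cdot,t)=0\}$ and the envelope point $\Delta_u(t)$ is determined by $f=f_t=0$. Differentiating,
\[ f_t=\langle X-P,\,P'\rangle-\langle P-M,\,P'\rangle, \]
with $P'=P'(t)$ tangent to $\E$. Assume $P(t)\in\H$ and $t\neq u$, so that $P-M$ is a nonzero normal vector and in particular $\langle P-M,P'\rangle=0$. Then $f_t=0$ forces $\langle X-P,P'\rangle=0$, while $f=0$ gives $\langle X-P,P-M\rangle=0$; as $P'$ and $P-M$ are nonzero and orthogonal they span the plane, so $X-P=0$. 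Hence $\Delta_u(t)=P(t)$, and the tangent to $\Delta$ there, namely $L(t)$, coincides with the tangent to $\E$ at $P(t)$ by the first paragraph. Thus $\Delta$ is tangent to $\E$ at $Q=P(t)$.

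For the converse and the count, note that any point $Q$ where $\Delta$ is tangent to $\E$ lies on some generating line $L(t)$, which is then the common tangent and hence tangent to $\E$ at $Q$; since $L(t)$ also passes through $P(t)\in\E$ and a tangent meets $\E$ only once, $Q=P(t)$ and $P(t)\in\H$. So the tangency points are exactly $(\E\cap\H)\setminus\{M\}$, equivalently the feet $t\neq u$ of normals from $M$ (the value $t=u$ gives the trivial foot $M$ and a degenerate line). The number of feet of normals from a point to an ellipse is the classical quantity controlled by the evolute: it is $4$ when the point is interior to $\E^*$, $2$ when exterior, and $3$ on $\E^*$ (two feet coalescing). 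Removing the foot at $M$ leaves $3$, $1$, or $2$ tangency points $Q_i$, as claimed.

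The main obstacle is the collapse $\Delta_u(t)=P(t)$ in the second paragraph: a priori the envelope point and $P(t)$ only share the line $L(t)$, and one must verify they genuinely coincide precisely under the normal condition. The identity for $f_t$ together with the nondegeneracy of the pair $\{P',\,P-M\}$ for $t\neq u$ is exactly what delivers this. A secondary point is the bookkeeping at the vertices of $\E$, where the normal at $M$ is an axis of symmetry that is simultaneously normal at the opposite vertex; there the intersection $\E\cap\H$ becomes tangential and one should confirm the count by hand rather than relying on the generic ``subtract one'' rule.
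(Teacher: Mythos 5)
Your proof is correct, and it reaches the paper's conclusion by a genuinely different route in the counting step. Both arguments rest on the same criterion—$\Delta$ touches $\E$ exactly at the feet, other than $M$, of normals to $\E$ through $M$, i.e.\ at $(\E\cap\H)\setminus\{M\}$—but you actually \emph{prove} this equivalence in both directions, via the envelope conditions $f=f_t=0$ and the observation that for $t\neq u$ the orthogonal pair $\{P'(t),\,P(t)-M\}$ spans the plane, forcing the characteristic point to collapse onto $P(t)$; the paper merely asserts the criterion, leaning on the remark following the definition of $\H$. Where the two proofs then part ways: the paper makes the count self-contained by deriving the explicit cubic $\mathcal{Q}(x)$ of Equation~\ref{eqn:qi-cubic} for the $x$-coordinates of the feet other than $M$, computing its discriminant, and locating the sign change at the abscissa $x^*$ of $\E\cap\E^*$ (Equation~\ref{eqn:ev-inter}); you instead invoke the classical theorem that a point has $4$, $3$, or $2$ normal feet on an ellipse according as it lies inside, on, or outside the evolute, and then subtract the foot at $M$. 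The trade-off is clear: the paper's algebraic route is self-contained and its by-products are reused downstream (the factorization $G(x)=\mathcal{Q}(x)^2\mathcal{J}(x)$ in the next proposition, and the annotations of Figure~\ref{fig:deltoid-tangs}), whereas your route explains conceptually \emph{why} the evolute governs the transition but delegates the quantitative core to a classical result—which is essentially what the paper's discriminant analysis proves from scratch. Two small refinements: the ``subtract one'' step is always legitimate because the foot at $M$ is necessarily simple ($M$ can never be its own center of curvature), which is worth saying explicitly; and your caveat about the vertices of $\E$ is largely unnecessary—there $\H$ degenerates into a pair of axis-parallel lines, but the relevant intersections with $\E$ remain transverse and the generic rule still applies. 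The only genuinely degenerate configuration is $M$ at a minor vertex when $a=\sqrt{2}\,b$, where $M$ sits at a cusp of $\E^*$ and the coalescence produces a single point of higher-order contact, matching the paper's remark that coinciding roots yield $4$-point contact.
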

\begin{proof}
$\Delta$ is tangent to $\E$ at some $Q_i$ if the normal of $\E$ at $Q_i$ points to $M=(M_x,M_y)$, i.e., when  $\H$ intersects with $\E$. It can be shown that their $x$ coordinate is given by the real roots 
of:

\begin{equation}
\mathcal{Q}(x) =  c^4 x^3- c^2M_x(a^2+b^2)x^2-a^4(a^2-2b^2)x+a^6M_x =0
\label{eqn:qi-cubic}
\end{equation}

The discriminant of the above is:
\[ -4c^4a^6(a^2-M_x^2)[(a^2-b^2) (a^2+b^2)^3M_x^2+a^4(a^2-2b^2)^3].\]

Let $\pm x^*$ denote the solutions to $(a^2-b^2) (a^2+b^2)^3M_x^2+a^4(a^2-2b^2)^3=0$. Assuming $a>b$, Equation~\ref{eqn:qi-cubic} has three real solutions when $|x|<x^*.$ 
The intersections of the evolute $\E^{*}$ with the ellipse $\E$ are given by the four points $(\pm x^*,\pm y^*)$, where:

\begin{equation}
x^*= \dfrac { a^2 \sqrt {  {a}^{4}-{b}^{4}    } \left( {a}^{2}-2\,{b}^{2} \right)^{\frac{3}{2}} }{ \left( {a}^{4}-{b}
^{4} \right)  \left( {a}^{2}+{b}^{2} \right) },\;\;\;y^*=  \dfrac {b^2 \sqrt {{a}^{4}-{b}^{
4}}\left( 2\,{a}^{2}-{b}^{2} \right) ^{\frac{3}{2}}}{ \left( {a}^{4}-{b}
^{4} \right)  \left( {a}^{2}+{b}^{2} \right) }
\label{eqn:ev-inter}
\end{equation}
For $M \in \E\cap\E^\ast$ two coinciding roots result in a 4-point contact between $\Delta$ and the ellipse.
\end{proof}

\begin{figure}
    \centering
    \includegraphics[width=\textwidth]{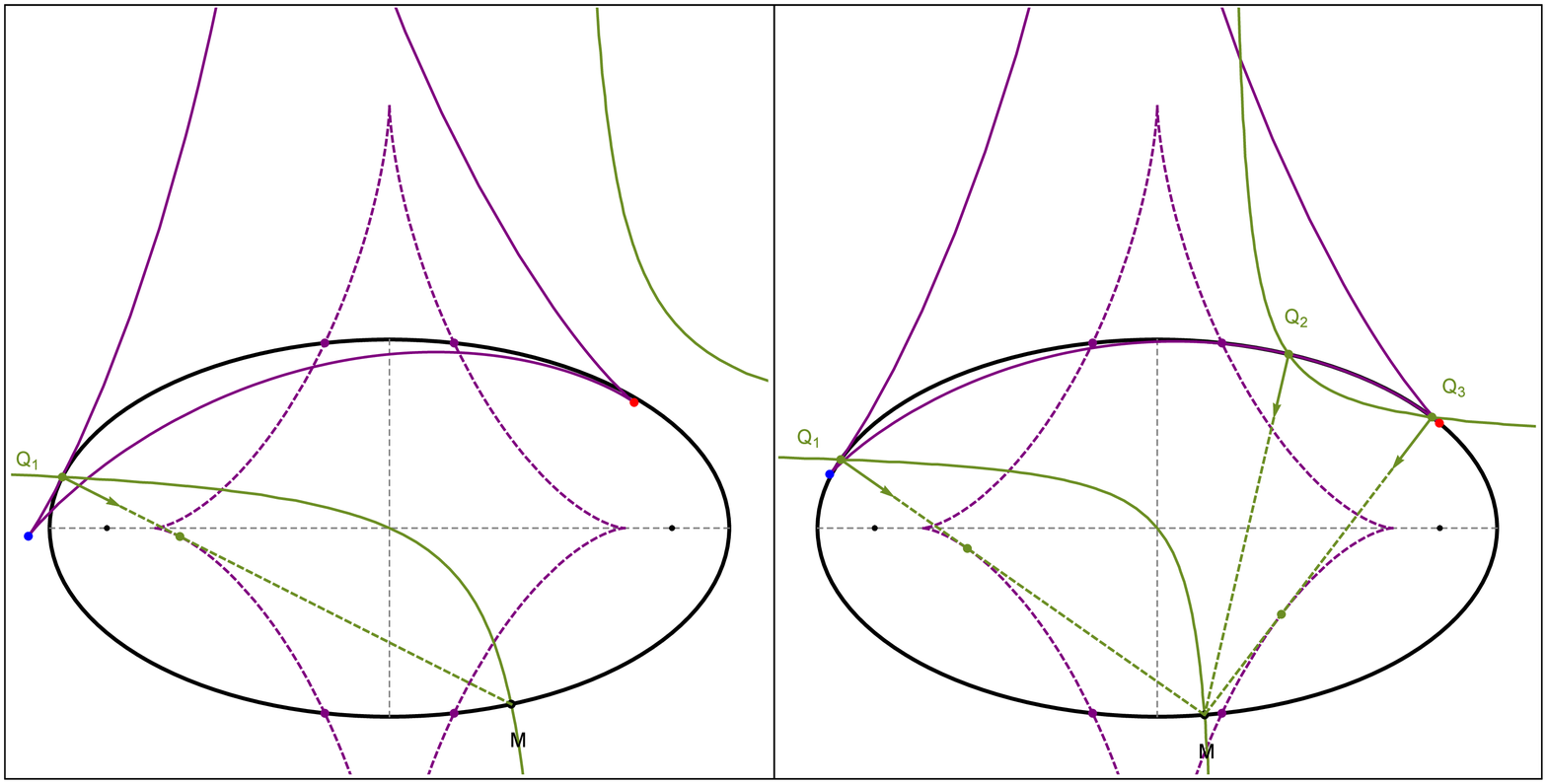}
    \caption{Steiner's Hat $\Delta$ (purple, top cusp not shown) is tangent to $\E$ at the intersections $Q_i$ of the Apollonius Hyperbola $\H$ (olive green) with $\E$, excluding $M$. Notice $\H$ passes through the center of $\E$. \textbf{Left:} When $M$ is exterior to the evolute $\E^*$ (dashed purple), only one tangent $Q_1$ is present. \textbf{Right:} When $M$ is interior to $\E^*$, three tangent points $Q_i,i=1,2,3$ arise. The intersections of $\E^*$ are given in Equation~\ref{eqn:ev-inter}. Note: the area ratio of $\Delta$-to-$\E^*$ is always $4/3$.}
    \label{fig:deltoid-tangs}
\end{figure}

Let $M=(M_x,M_y)$ be a point on $\E$ and $\mathcal{J}(x)$ denote the following cubic polynomial:

\begin{equation}
\label{eq:j-inter}
\mathcal{J}(x)=(a^2+b^2)^2x^2-2M_x c^2(a^2+b^2)x-4a^4b^2+M_x^2(a^2+b^2)^2\\
\end{equation}
\begin{proposition}
$\Delta$ intersects $\E$ at $\mathcal{Q}(x)\mathcal{J}(x)=0$, in at least $3$ and up to $5$ locations locations, where $\mathcal{Q}$ is as in Equation~\ref{eqn:qi-cubic}.
\end{proposition}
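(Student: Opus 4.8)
The plan is to show that the abscissae of the points of $\Delta\cap\E$ are precisely the real roots of $\mathcal{Q}(x)\mathcal{J}(x)$, with the two factors recording the two geometrically distinct types of contact: the tangential contacts, already identified in Proposition~\ref{prop:3tan} as the points of $\E\cap\H$ whose abscissae solve the cubic $\mathcal{Q}$, and the remaining transversal crossings, which I claim are governed by the quadratic $\mathcal{J}$. Since Steiner's Hat is a quartic (Theorem~\ref{th:affine}) and $\E$ a conic, B\'ezout bounds the intersection at $8$ points counted with multiplicity; the whole strategy is to read this $8$ as $2\cdot 3+2$, each tangency counting twice and each transversal once.

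First I would substitute the parametrization $\Delta_u(t)=(x_u(t),y_u(t))$ of Equation~\eqref{eq:deltoide} into $\E(x,y)=x^2/a^2+y^2/b^2-1$, producing a trigonometric polynomial $F(t)=\E(\Delta_u(t))$ of Fourier degree $4$ (as $x_u,y_u$ have Fourier degree $2$), hence with at most $8$ roots in $[0,2\pi)$. At a tangency one has $\Delta_u(t)=P(t)$ and the tangent to $\Delta$ there, namely $L(t)$, is tangent to $\E$; consequently $F'(t)=\nabla\E(\Delta_u(t))\cdot\Delta_u'(t)=0$, so every tangency parameter is a \emph{double} root of $F$. The (up to three) tangency parameters satisfy the Apollonius condition and, through $x=a\cos t$, reproduce the cubic $\mathcal{Q}$; dividing out these double roots leaves a residual factor carrying the two transversal parameters, and eliminating $t$ in favour of the abscissa $x=x_u(t)$ on that factor is what yields $\mathcal{J}(x)$. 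Equivalently, and more transparently for multiplicity tracking, I would form the resultant $\mathrm{Res}_y(\Delta,\E)$ from the implicit quartic of $\Delta$: this is a degree-$8$ polynomial in $x$ that should factor as $\mathrm{const}\cdot\mathcal{Q}(x)^2\,\mathcal{J}(x)$, the square recording the tangential multiplicity, so that the \emph{distinct} intersection abscissae are exactly the roots of $\mathcal{Q}(x)\mathcal{J}(x)$.

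For the count I would argue from the two factors directly. By the discriminant analysis of Proposition~\ref{prop:3tan}, $\mathcal{Q}$ has $1$ or $3$ real roots (and the degenerate $2$ when $M\in\E^{*}$), with at least one genuine in-range tangency always present. For $\mathcal{J}$, a direct computation of its discriminant gives
\[
16\,a^2b^2(a^2+b^2)^2\,(a^2-M_x^2),
\]
nonnegative because $M\in\E$ forces $|M_x|\le a$; hence $\mathcal{J}$ always has two real roots. Solving,
\[
x_\pm=\frac{(a^2-b^2)M_x\pm 2a^2 M_y}{a^2+b^2},
\]
and writing $M=(a\cos u,b\sin u)$ one checks that $(a^2-b^2)\cos u\pm 2ab\sin u$ has amplitude $a^2+b^2$, so $x_\pm=a\cos(u\mp\phi)\in[-a,a]$ and both transversal abscissae are genuine points of $\E$. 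Combining, the number of real intersection locations runs from $1+2=3$ (when $M$ is exterior to $\E^{*}$) to $3+2=5$ (when $M$ is interior to $\E^{*}$), with the intermediate value $4$ occurring in the boundary configurations, e.g. $M\in\E^{*}$ where two tangencies merge, or when a root of $\mathcal{J}$ coincides with one of $\mathcal{Q}$.

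The hard part will be the elimination together with the multiplicity bookkeeping: verifying rigorously that the tangential contacts enter the resultant squared while the transversal ones enter linearly, so the intersection divisor is exactly $\mathcal{Q}^2\mathcal{J}$ rather than some rearrangement carrying spurious factors. A related nuisance is ruling out accidental coincidences between roots of $\mathcal{Q}$ and of $\mathcal{J}$ outside the boundary cases, and confirming that each admissible abscissa corresponds to a single point of $\Delta\cap\E$, so that counting $x$-values genuinely counts locations. Both difficulties are computational rather than conceptual once the explicit quartic of $\Delta$, or equivalently the clean factorization of $F(t)$, is in hand.
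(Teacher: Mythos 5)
Your argument's skeleton (substitute the parametrization into $\E$, invoke B\'ezout's bound of $8$, factor a degree-8 eliminant as $\mathcal{Q}^2$ times a quadratic, then count real roots) is the paper's, but you eliminate to the wrong abscissa, and this breaks the identification of $\mathcal{J}$. The paper's elimination is carried out in the coordinates of the \emph{pre-image} point $P(t)=(a\cos t,b\sin t)$: the condition $\E(\Delta_u(t))=0$ is rewritten as a quartic $F(x,y)=0$ in those coordinates, the ellipse equation is adjoined, and the resultant in $y$ gives $G(x)=\mathcal{Q}(x)^2\mathcal{J}(x)$, whose roots are the abscissae of pre-images of intersections. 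For the tangential contacts the distinction is invisible, because there $\Delta_u(t)=P(t)$ --- which is why $\mathcal{Q}$ simultaneously describes the tangency points and their pre-images --- but for the two transversal crossings $\Delta_u(t)\neq P(t)$, and the roots of the specific polynomial $\mathcal{J}$ of Equation~\ref{eq:j-inter} are the abscissae of the \emph{pre-images} of those crossings, not of the crossing points themselves. Hence both of your central claims --- that ``the abscissae of the points of $\Delta\cap\E$ are precisely the real roots of $\mathcal{Q}(x)\mathcal{J}(x)$,'' and that $\mathrm{Res}_y(\Delta,\E)=\mathrm{const}\cdot\mathcal{Q}(x)^2\mathcal{J}(x)$ for the implicit quartic of $\Delta$ --- are false as stated; the resultant of the implicit quartic with $\E$ is $\mathcal{Q}^2$ times a \emph{different} quadratic.

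A concrete counterexample: take $a=2$, $b=1$ (so $c^2=3$) and $M=(0,1)$, i.e.\ $u=\pi/2$. Then $\mathcal{J}(x)=25x^2-64$, with roots $\pm 8/5$ (exactly your $x_\pm$). However, the two transversal intersections of $\Delta$ with $\E$ are the points $(\pm 48/25,\,-7/25)$: for $\sin t=-3/5$, $\cos t=\pm 4/5$ one computes $\Delta_u(t)=(\pm 48/25,\,-7/25)$, which lies on $\E$, yet $\mathcal{J}(\pm 48/25)=704/25\neq 0$. The numbers $\pm 8/5$ are instead the abscissae of the pre-images $P(t)=(\pm 8/5,\,-3/5)$. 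So counting $x$-values of your resultant does not count the locations named in the statement, and the ``multiplicity bookkeeping'' you defer can never produce the factor $\mathcal{J}$ from the implicit curve. Your computations of the discriminant and roots of $\mathcal{J}$ are correct, and your final tally of $3$ to $5$ locations happens to survive (each real root of $\mathcal{J}$ does correspond to exactly one transversal crossing, via its pre-image); what is wrong is the geometric meaning you assign to $\mathcal{J}$. The repair is to run the elimination in the pre-image variables $(x,y)=(a\cos t,b\sin t)$, as the paper does, rather than on the implicit equation of $\Delta$, and then to state the conclusion accordingly: roots of $\mathcal{Q}$ give the tangency points, while roots of $\mathcal{J}$ give the pre-images of the remaining crossings.
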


\begin{proof}
As before, $M=(M_x,M_y)=(a\cos{u},b\sin{u})\in \E$ and $ P=(x,y)=(a\cos{t},b\sin{t})$. The intersection $\Delta_u(t)$ with  $\E$ is obtained by setting $\E(\Delta_u(t))=0$. Using Equation~\ref{eq:deltoide}, obtain the following system:

	\begin{align*}
F(x,y)=&	{b}^{2} \left( {a}^{2}-2\,M_x^2 \right)  \left( {a}^{2}+{b}^{
		2} \right) c^{4}{x}^{4}+2\,{a}^{2}M_x\,{M_y}\,
	\left( {a}^{2}+{b}^{2} \right) c^{4}{x}^{3}y\\
	+&2\,{a}^{2}{b}^{
		2}{ M_x}\,c^{2} \left( {a}^{4}+{b}^{4} \right) {x}^{3}-2\,{
		a}^{4}M_y\,c^{2} \left( {a}^{4}+{b}^{4} \right) {x}^{2}y\\
	+& \left[  -{a}^{4}{b}^{2} \left( {a}^{2}+{b}^{2} \right)  \left( 3\,{a}
	^{4}-4\,{a}^{2}{b}^{2}+2\,{b}^{4} \right) +{a}^{2}{b}^{2} c^2 M_x^2
	 \left( 3\,{a}^{2}-{b}^{2} \right)  \left( {a}^{2}+{b}^
	{2} \right)  \right] {x}^{2}\\
	-&2\,{a}^{6}M_x\,{M_y}\,{c}^{2} \left( {a}^{2}+{b}^{2} \right) xy-2\,{b}^{2}{  M_x}\,{a}^{6}
	\left( {a}^{4}-{a}^{2}{b}^{2}+{b}^{4} \right) x\\
	+&2\,{a}^{12} {  M_y}y+
	{a}^{8}{b}^{2} \left( 2\,{a}^{4}-({a}^{2}+{b}^{2}) M_x^{2} \right)=0 \\
\E(x,y)=& \frac{x^2}{a^2}+\frac{y^2}{b^2}-1=0
	\end{align*}

By Bézout's theorem, the system $\E(x,y)=F(x,y)=0$ has eight solutions, with algebraic multiplicities taken into account. $\Delta$ has three points of tangency with ellipse, some of which may be complex, which by Proposition~\ref{prop:3tan} are given by the zeros of $\mathcal{Q}(x)$.
Eliminating $y$ by computing the resultant we obtain
an Equation $G(x)=0$ of degree 8 over $x$. Manipulation with a Computer Algebra System yields a compact representation for $G(x)$:
\[ G(x)= \mathcal{Q}(x)^2 \mathcal{J}(x)\]
\noindent with $\mathcal{J}$ as in Equation~\ref{eq:j-inter}. If $|M_x|\leq a$,	the solutions of $\mathcal{J}(x)=0$ are real and given by
$J_x=[c^2M_x\pm 2 a b\sqrt{a^2-M_x^2}]/(  a^2+b^2)$ and $|J_x|\leq a$.

\end{proof}

\noindent Referring to Figure~\ref{fig:cusp-loci}:

\begin{proposition}
\label{prop:coincidence}
 When a cusp $P_i'$ crosses the boundary of $\E$, it coincides with its pre-image $P_i$ at $Z_i=\frac{1}{\sqrt{a^2+b^2}}(\pm a^2,\pm b^2)$. 
\end{proposition}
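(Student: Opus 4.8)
The plan is to turn the geometric event ``the cusp crosses the boundary'' into the single scalar equation $\E(P_i')=0$, and then to exploit the fact that, in the right variable, the left-hand side collapses to a perfect cube whose unique root forces $P_i'=P_i$.

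First I would put the cusp $P_i'=\Delta_u(t_i)$ in closed form. The singular parameters are $t_i=-\tfrac{u}{3}-(i-1)\tfrac{2\pi}{3}$, so $3t_i\equiv -u\pmod{2\pi}$ and hence $\cos(3t_i)=\cos u$, $\sin(3t_i)=-\sin u$. Substituting $t=t_i$ into the parametrization~\eqref{eq:deltoide} and eliminating $u$ through these triple-angle identities, everything reduces to $\cos t_i$ and $\sin t_i$; after using $\cos(3t_i)=4\cos^3 t_i-3\cos t_i$ and the sine analogue, I expect the clean symmetric form
\[
P_i'=\left(\frac{\cos t_i}{a}\bigl(3a^2-2(a^2+b^2)\cos^2 t_i\bigr),\ \frac{\sin t_i}{b}\bigl(3b^2-2(a^2+b^2)\sin^2 t_i\bigr)\right),
\]
while the pre-image is simply $P_i=(a\cos t_i,\,b\sin t_i)$.

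Next I would substitute this $P_i'$ into $\E(x,y)=x^2/a^2+y^2/b^2-1$. Writing $k=a^2+b^2$ and using $\cos^2 t_i+\sin^2 t_i=1$, I claim the expression telescopes to a single cube,
\[
\E(P_i')=-\frac{4c^2 k^{3}}{a^{4}b^{4}}\left(\cos^2 t_i-\frac{a^2}{a^2+b^2}\right)^{3}.
\]
Since $a>b$ forces $c^2>0$, the leading factor never vanishes, so $P_i'\in\E$ holds \emph{iff} $\cos^2 t_i=a^2/(a^2+b^2)$, equivalently $\sin^2 t_i=b^2/(a^2+b^2)$. Feeding this back makes both quadratic brackets collapse, $3a^2-2k\cos^2 t_i=a^2$ and $3b^2-2k\sin^2 t_i=b^2$, whence $P_i'=(a\cos t_i,\,b\sin t_i)=P_i$: the cusp has merged with its pre-image. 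Finally $\cos t_i=\pm a/\sqrt{k}$ and $\sin t_i=\pm b/\sqrt{k}$ give $P_i=Z_i=\frac{1}{\sqrt{a^2+b^2}}(\pm a^2,\pm b^2)$, as asserted.

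As a coordinate-free cross-check I would use Proposition~\ref{prop:osculate}: the circle $\K_i$ osculates $\E$ at $P_i$ and passes through $M\in\E$. Osculation gives contact of order $3$ at $P_i$, so together with the intersection at $M$ this already accounts for four of the (at most four) common points of two conics guaranteed by B\'ezout; since the ellipse misses the circular points at infinity, all four are the honest affine intersections. As $P_i'$ lies on $\K_i$, the hypothesis $P_i'\in\E$ would make $P_i'$ a fifth intersection unless $P_i'\in\{P_i,M\}$, and a one-line check that $Z_i\neq M$ (equality would force $a^2-3b^2=a^2+b^2$) pins it to $P_i$. I expect the genuine obstacle to be purely the algebra of the middle step: confirming that $\E(P_i')$ is really a perfect cube rather than a general cubic in $\cos^2 t_i$. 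This is the crux, since the \emph{triple} root is exactly what forbids a transversal crossing at a generic point and forces coincidence with the pre-image; once the cube is verified everything else is immediate substitution. The osculating-circle route, if one prefers to avoid the expansion, carries only a milder subtlety: one must note that the $Z_i$ avoid the four vertices of $\E$, where the osculating contact jumps to order $4$ and the multiplicity bookkeeping would need adjustment.
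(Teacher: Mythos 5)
Your main argument is correct, and it takes a genuinely different route from the paper's. You argue by direct computation: after the substitution $u=-3t_i$ your closed form for the cusp $P_i'$ is right (it agrees with the paper's Appendix A expressions and with the parametrization in Proposition~\ref{eq:deltoide}), and the key identity indeed holds: writing $z=\cos^2 t_i$ and $k=a^2+b^2$, both sides of your claimed equation $\E(P_i')=-\tfrac{4c^2}{a^4b^4}\bigl(kz-a^2\bigr)^3$ are cubic polynomials in $z$ with the same leading coefficient $-4c^2k^3$ that agree at $z=0$, $z=1$ and $z=a^2/k$, hence they are identical. This one identity simultaneously locates the crossings ($z=a^2/k$) and forces the coincidence, since at that root the brackets $3a^2-2k\cos^2t_i$ and $3b^2-2k\sin^2t_i$ collapse to $a^2$ and $b^2$, giving $P_i'=(a\cos t_i,b\sin t_i)=P_i=Z_i$. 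The paper instead argues synthetically from Proposition~\ref{prop:osculate}: the circle $\K_i$ through $M,P_i,P_i'$ osculates $\E$ at $P_i$ (contact order $\geq 3$) and meets $\E$ again at $M$, so if $P_i'\in\E$ the four-intersection bound for two conics forces $P_i'=P_i$, after excluding $P_i'=M$ (immediate, since $\angle MP_iP_i'=90^\circ$ makes $M$ and $P_i'$ diametrically opposite on $\K_i$) and $M=P_i$ (possible only with $M$ at a vertex, where the contact jumps to order $4$ and already saturates the bound); only then does the paper substitute coordinates to find the $Z_i$. Your ``cross-check'' paragraph is essentially this same argument, with one caveat: as a standalone proof, your exclusion of $P_i'=M$ leans on already knowing the points $Z_i$ from the computation, whereas the paper gets that exclusion for free from diametral opposition --- worth adopting if you want the synthetic route to stand on its own. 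As for what each buys: the paper's proof is shorter and explains conceptually \emph{why} coincidence is forced (multiplicity bookkeeping), but it still needs a coordinate computation at the end and careful degenerate-case handling; yours is a single self-contained calculation in which the perfect-cube structure does all the work at once.
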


\begin{proof}  
Assume $P_i'$ is on $\E$. Since $P_i'$ is on the circle $\K_i$ defined by $M$, $P_i$ and $P_i'$ which osculates $\E$ at $P_i$, this circle intersects $\E$ at $P_i$ with order of contact 3 or 4.
By construction, we have  $M P_i\perp P_i P_i'$, so $M$ and $P_i'$ are diametrically opposite in $\K_i$. Thus, $M$ and $P_i'$ must be distinct. Since two conics have at most 4 intersections (counting multiplicities), we either have $P_i'=P_i$  or $P_i=M$. The second case will only happen when $M$ is on one of the four vertices of the ellipse $\E$, in which case the osculating circle $\K_i$ has order of contact 4, so $P_i'$ could not also be in the ellipse in the first place. Thus, $P_i'=P_i$ as we wanted.

Substituting the parameterization of $P_i$ in the equation of $\E$ , we explicitly find the four points $Z_i$ at which $P_i'$ can intersect the ellipse $\E$. 
\end{proof}


\section{A Triad of Osculating Circles}
\label{sec:osc_circs}

Recall $\K_i$ are the circles which osculate $\E$ at the pre-images $P_i$, see Figure~\ref{fig:osculating}. Define a triangle $T''$ by the centers $P_i''$ of the $\K_i$. These are given explicit coordinates in Appendix~\ref{app:cusps}. Referring to Figure~\ref{fig:osc_concur}:

\begin{proposition}
Triangles $T'$ and $T''$ are homothetic at ratio $2:1$, with $M$ as the homothety center.
\label{prop:persM}
\end{proposition}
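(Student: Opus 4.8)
The plan is to prove this by directly comparing the three cusp vertices $P_i'$ with the three osculating-circle centers $P_i''$, showing that each pair satisfies $P_i'' - M = \tfrac{1}{2}(P_i' - M)$, which is precisely the statement that $T''$ is the image of $T'$ under the homothety centered at $M$ with ratio $2:1$. Since both triangles have their vertices indexed by the same $i$ (via the common cusp parameter $t_i = -u/3 - (i-1)\tfrac{2\pi}{3}$ established in the proof of Theorem~\ref{th:circle_5}), it suffices to verify this vector relation for a single representative index, say $i=1$, and then invoke the cyclic symmetry of the construction.

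First I would collect the two explicit coordinate expressions already available in the excerpt. The center $P_1''$ of the osculating circle $\K_1$ was identified in the proof of Proposition~\ref{prop:osculate} as the evolute point $P_1'' = \E^*(-u/3) = \left[\tfrac{c^2}{a}\cos^3(u/3),\, -\tfrac{c^2}{b}\sin^3(u/3)\right]$. The cusp $P_1'$ can be read off from the parametrization $\Delta_u(t)$ in Proposition~\ref{eq:deltoide} evaluated at $t_1 = -u/3$ (these are the explicit coordinates promised in Appendix~\ref{app:cusps}). The homothety center $M = M_u = (a\cos u, b\sin u)$ is known. With all three points in hand, the claim reduces to checking the single vector identity $2(P_1'' - M) = P_1' - M$, i.e. $2P_1'' - P_1' = M$, componentwise.

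The computation splits cleanly by coordinate. In the $x$-coordinate I expect to reduce everything to a trigonometric identity in the single angle $u/3$: writing $\cos u = \cos(3\cdot u/3) = 4\cos^3(u/3) - 3\cos(u/3)$ via the triple-angle formula, the term $2P_{1,x}'' = \tfrac{2c^2}{a}\cos^3(u/3)$ should combine with the corresponding part of $P_{1,x}'$ to reproduce $a\cos u$ after the $c^2 = a^2-b^2$ terms are regrouped; the $y$-coordinate works identically using $\sin u = 3\sin(u/3) - 4\sin^3(u/3)$. The main obstacle, and the only genuinely laborious step, will be confirming that the expression for $P_1'$ obtained from $\Delta_u(-u/3)$ simplifies to exactly $2P_1'' - M$ once the triple-angle substitutions are made; this is a routine but error-prone trigonometric reduction best delegated to a computer algebra system, which is consistent with how the paper verifies its other cusp/evolute coincidences.

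An alternative, and arguably cleaner, route is to exploit the affine structure rather than raw coordinates. By Proposition~\ref{prop:osculate}, each $P_i''$ lies on the evolute $\E^*$ at parameter $t_i$, and the evolute is the half-scale $V\circ D$-type image of the ellipse's normal construction; meanwhile $T'$ is the affine image of the regular Steiner triangle under the map in Theorem~\ref{th:affine}. If one can show that the evolute-center construction $P_i \mapsto P_i''$ and the cusp construction $P_i \mapsto P_i'$ differ by a fixed $2:1$ homothety about $M$ at the level of the underlying regular configuration — where $M$ is the Steiner point $X_{99}$ and the homothety is a symmetry of the three-cusped hypocycloid — the result would follow for all $i$ at once without case-by-case trigonometry. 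I would attempt the direct coordinate verification first for certainty, and cite the affine picture as conceptual explanation.
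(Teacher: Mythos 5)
Your proposal is correct, but it proves the proposition by a genuinely different route than the paper. The paper's argument is purely synthetic and two lines long: since the cusp $P_i'$ lies on the line $L(t_i)$, which by construction passes through $P_i$ and is perpendicular to $P_i-M$, one has $\angle M P_i P_i'=90^{\circ}$; since $M$, $P_i$, $P_i'$ are concyclic on $\K_i$ (Proposition~\ref{prop:osculate}), Thales' theorem forces $MP_i'$ to be a diameter of $\K_i$, so its center $P_i''$ is the midpoint of $MP_i'$, which is exactly the $2:1$ homothety about $M$. Your route instead verifies the same midpoint relation by brute-force coordinates, and it does go through: with $P_1'=\Delta_u(-u/3)$ and $P_1''=\E^{*}(-u/3)$ one indeed gets
\begin{equation*}
2\,\E^{*}\!\left(-\tfrac{u}{3}\right)-\Delta_u\!\left(-\tfrac{u}{3}\right)=\left(a\cos u,\; b\sin u\right)=M,
\end{equation*}
after triple-angle reductions, and your symmetry argument for $i=2,3$ is legitimate (replacing $u$ by $u+2\pi$ fixes $M$ and the configuration while cyclically relabeling the cusps). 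What the paper's proof buys is economy and insight: it needs no formulas at all and exposes the geometric reason (right angle inscribed in $\K_i$), whereas your computation rederives the fact without explaining it, and in practice needs a CAS. What your route buys is independence from the synthetic observation $\angle MP_iP_i'=90^{\circ}$ and a concrete cross-check of the explicit coordinates; indeed, carrying it out reveals a sign typo in the paper's Appendix~\ref{app:cusps}, where the $y$-coordinate of $P_1''$ should read $-\frac{c^2}{4b}\sin u$ rather than $+\frac{c^2}{4b}\sin u$. One caution: your ``cleaner'' affine alternative is not sound as sketched, because osculating circles and the evolute are metric, not affine, notions, so the $P_i\mapsto P_i''$ construction does not transport along the affine map of Theorem~\ref{th:affine}; only the coordinate (or the paper's synthetic) argument should be regarded as the proof.
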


\begin{proof}
From the construction of $\Delta$, for each $i=1,2,3$ we have $M P_i\perp P_i P_i'$, that is, $\angle M P_i P_i'=90^{\circ}$. Hence, $M P_i'$ is a diameter of the osculating circle $\K_i$ that goes through $M$, $P_i$, and $P_i'$ as proved in Proposition ~\ref{prop:osculate}. Thus, the center $P_i''$ of $\K_i$ is the midpoint of $M P_i'$ and therefore $P_i'$ is the image of $P_i''$ under a homothety of center $M$ and ratio 2.
\end{proof}

\begin{corollary}
The area $A''$ of $T''$ is invariant over all $M$ and is $1/4$ that of $T'$.
\end{corollary}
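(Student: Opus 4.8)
The plan is to read off the area invariance and the $1/4$ ratio as immediate consequences of the homothety established in Proposition~\ref{prop:persM}. That proposition already asserts that $T'$ and $T''$ are homothetic with center $M$ and ratio $2:1$; a homothety with ratio $k$ scales areas by $k^2$. Since $T''$ is the image of $T'$ under a homothety of ratio $1/2$ (equivalently, $T'$ is the image of $T''$ under ratio $2$), we get $A'' = (1/2)^2 A' = A'/4$. This is the entire content of the ratio claim, so the second sentence of the corollary is essentially free once Proposition~\ref{prop:persM} is granted.

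For the invariance claim, I would invoke the already-proven fact that $A'$ is invariant over $M$ (Proposition in Section~\ref{sec:cusp_tri}, giving $A' = \tfrac{27\sqrt{3}}{16}\,\tfrac{c^4}{ab}$). Combining the homothety relation $A'' = A'/4$ with the invariance of $A'$ yields
\[
A'' = \frac{1}{4}\,A' = \frac{27\sqrt{3}}{64}\,\frac{c^4}{ab},
\]
which is manifestly independent of $u$ (hence of $M$). Thus invariance of $A''$ follows formally from invariance of $A'$ together with the constant ratio, with no new computation required.

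The only genuinely substantive point to check is that the homothety of Proposition~\ref{prop:persM} is an honest similarity with a well-defined signed ratio, so that the area-scaling law $A'' = k^2 A'$ applies verbatim even as $M$ varies and the triangles deform. This is guaranteed because a homothety of ratio $2$ centered at $M$ sends each vertex $P_i''$ to $P_i'$ uniformly (the center $M$ and ratio are the same for all three $i$), so the map is a genuine plane homothety and not merely three unrelated midpoint relations; an affine map with constant linear part $2\,\mathrm{Id}$ has Jacobian determinant $4$ everywhere. I do not anticipate any real obstacle here, since the bulk of the work has already been discharged in Proposition~\ref{prop:persM} and in the prior computation of $A'$; the corollary is a one-line consequence. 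If one wished to avoid appealing to the homothety altogether, an alternative is to compute $A''$ directly from the explicit coordinates of the centers $P_i''$ (available in Appendix~\ref{app:cusps}) and verify both the numerical value and the vanishing of its $u$-derivative, but the homothety argument is cleaner and is the route I would take.
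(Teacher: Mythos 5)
Your proof is correct and follows exactly the paper's own route: the paper's proof likewise deduces the $1/4$ ratio from the homothety of Proposition~\ref{prop:persM} and the invariance from the previously established invariance of $A'$. Your additional remarks (the explicit value $\tfrac{27\sqrt{3}}{64}\tfrac{c^4}{ab}$ and the check that the homothety scales areas by the square of its ratio) are sound but not needed beyond what the paper states.
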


\begin{proof}
This follows from the homothety, and the fact that the area of $T'$ is invariant from Proposition~\ref{invAreaT'}.
\end{proof}

\begin{figure}
    \centering
    \includegraphics[width=.8\textwidth]{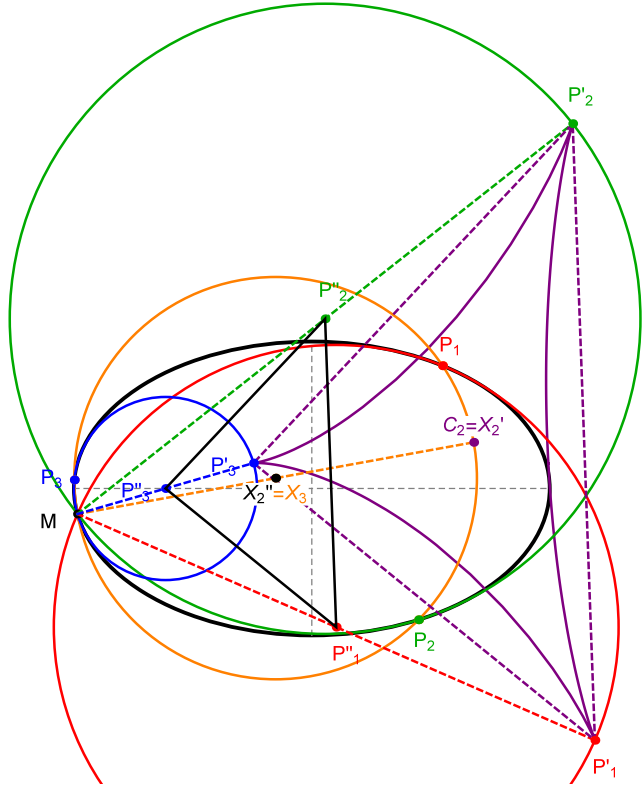}
    \caption{Lines connecting each cusp $P_i'$ to the center $P_i''$ of the circle which osculates $\E$ at the pre-image $P_i$ concur at $M$. Note these lines are diameters of said circles. Therefore $M$ is the perspector of $T'$ and $T''$, i.e., the ratio of their areas is 4. This perspectivity implies $C_2,X_2'',M$ are collinear. Surprisingly, the $X_2''$ coincides with the circumcenter $X_3$ of the pre-image triangle $T$ (not drawn).}
    \label{fig:osc_concur}
\end{figure}

\begin{proposition} Each (extended) side of $T'$ passes through an intersection of two osculating circles.
Moreover, those sides are perpendicular to the radical axis of said circle pairs.
\end{proposition}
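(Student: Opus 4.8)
The plan is to treat a single side, say $P_1'P_2'$, together with its two associated osculating circles $\K_1$ and $\K_2$; the other two sides then follow by relabeling the indices. Recall from the proof of Proposition~\ref{prop:persM} that $M P_i'$ is a diameter of $\K_i$, whose center $P_i''$ is the midpoint of $M P_i'$. In particular both $\K_1$ and $\K_2$ pass through $M$, so generically they meet again at a second point, which I will call $N$. My claim is that this $N$ is the promised intersection lying on the side $P_1'P_2'$.

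The key step is an inscribed-angle argument. Since $M P_1'$ is a diameter of $\K_1$ and $N\in\K_1$, Thales' theorem gives $\angle M N P_1' = 90^\circ$; likewise $N\in\K_2$ with diameter $M P_2'$ gives $\angle M N P_2' = 90^\circ$. Thus the lines $N P_1'$ and $N P_2'$ are both orthogonal to $N M$, and since there is a unique line through $N$ perpendicular to $N M$, they must coincide. Hence $P_1'$, $N$, $P_2'$ are collinear, i.e. $N$ lies on the (extended) side $P_1'P_2'$, which establishes the first assertion.

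For the ``moreover'' clause, note that $\K_1$ and $\K_2$ share exactly the two points $M$ and $N$, so their radical axis is the line $M N$, which is perpendicular to the line of centers $P_1''P_2''$. By Proposition~\ref{prop:persM} the triangle $T'$ is the image of $T''$ under the homothety of center $M$ and ratio $2$, so the side $P_1'P_2'$ is parallel to $P_1''P_2''$. Combining these, $P_1'P_2' \parallel P_1''P_2'' \perp M N$, so the side is perpendicular to the radical axis. Equivalently and more directly: since $P_1'P_2'$ is the line $N P_1'$ and we have shown $N P_1'\perp N M$, the side is immediately seen to be perpendicular to the radical axis $M N$.

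I expect the only delicate point to be the degenerate configurations. If $M$, $P_1'$, $P_2'$ happen to be collinear, then the centers $P_1''$, $P_2''$ and $M$ align, the circles $\K_1$ and $\K_2$ become tangent at $M$, the second intersection $N$ merges with $M$, and the statement degenerates. For $M$ in general position on $\E$ this does not occur and the argument above applies verbatim; one should simply note that such collinearities correspond to a finite set of exceptional positions of $M$, which can be ruled out directly from the explicit cusp coordinates in Appendix~\ref{app:cusps}.
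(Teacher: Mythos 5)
Your proof is correct, but it takes a genuinely different route from the paper's. The paper works with the \emph{centers}: since $MM_1$ (the common chord of the two circles) is the radical axis, it is perpendicular to the line of centers $P_2''P_3''$ and is bisected by it at $M_{1/2}$; applying the homothety of center $M$ and ratio $2$ from Proposition~\ref{prop:persM} then carries the line $P_2''P_3''$ to the side $P_2'P_3'$ and the midpoint $M_{1/2}$ to $M_1$, proving both claims at once. You instead work at the second intersection point itself: a double application of Thales' theorem (using that $MP_1'$ and $MP_2'$ are diameters of $\K_1$ and $\K_2$, again the content of Proposition~\ref{prop:persM}) shows that the two right angles $\angle MNP_1'=\angle MNP_2'=90^\circ$ force $P_1'$, $N$, $P_2'$ to be collinear, and the perpendicularity to the radical axis $MN$ falls out of the same computation with no extra work. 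Your argument is the more elementary and self-contained of the two — it never needs the radical-axis-perpendicular-to-line-of-centers fact nor the image of the chord's midpoint under the homothety — while the paper's version exploits machinery it has already built. Both proofs share the same implicit genericity assumption, namely that the two circles meet at a second point $N$ (equivalently $M_1$); you deserve credit for flagging the degenerate case (tangency at $M$, i.e.\ $M$, $P_1'$, $P_2'$ collinear), which the paper silently ignores, though your claim that it ``can be ruled out directly from the explicit cusp coordinates'' is asserted rather than carried out.
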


\begin{proof}
It suffices to prove it for one of the sides of $T'$ and the others are analogous. Let $M_1$ be the intersection of $\K_2$ and $\K_3$ different than $M$ and let $M_{1/2}$ be the midpoint of $M$ and $M_1$. Since $M M_1$ is the radical axis of $\K_2$ and $\K_3$, the lines $P_2''P_3''$ and $M M_1$ are perpendicular and their intersection is $M_{1/2}$. Applying an homothety of center $M$ and ratio 2, we get that the lines $P_2' P_3'$ and $M M_1$ (the radical axis) are perpendicular and their intersection is $M_1$, as desired.
\end{proof}

\begin{corollary}
The Steiner ellipse $\E''$ of triangle  $T''$ is similar to $\E'$. In fact, $\E''=\frac{1}{2}\E'.$
\end{corollary}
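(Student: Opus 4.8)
The plan is to deduce this immediately from the homothety established in Proposition~\ref{prop:persM} together with the affine covariance of the Steiner circumellipse, avoiding any direct manipulation of implicit equations. Recall that Proposition~\ref{prop:persM} asserts that $T''$ is the image of $T'$ under the homothety $h$ centered at $M$ with ratio $1/2$; equivalently, $P_i''=M+\tfrac12(P_i'-M)$ for each $i$. Everything else follows formally from this single fact.

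The key structural ingredient I would invoke is that the Steiner circumellipse of a triangle is preserved by affine maps: if $\phi$ is any affine transformation of the plane, then the Steiner ellipse of $\phi(T')$ is $\phi$ applied to the Steiner ellipse of $T'$. This holds because the Steiner circumellipse is characterized by two purely affine properties --- it is the unique circumscribed conic centered at the centroid --- and both ``circumscribed conic through the three vertices'' and ``centroid'' are affine notions. Concretely, it is the image of the circumcircle of any equilateral triangle under the affine map carrying that equilateral triangle onto the given one, which makes the covariance transparent.

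I would then carry out the argument in three short steps. First, apply the covariance to $\phi=h$: since $T''=h(T')$, the Steiner ellipse of $T''$ is $\E''=h(\E')$. Second, observe that a homothety of ratio $1/2$ is a similarity transformation with scaling factor $1/2$ --- it preserves all directions and angles and merely shrinks lengths by one half --- so $\E''=h(\E')$ is a similar copy of $\E'$ scaled by $1/2$; this is exactly the assertion $\E''=\tfrac12\E'$, and it simultaneously establishes that $\E''$ is similar to $\E'$. Third, for completeness one may track the center: affine maps send centroids to centroids, so the center $X_2''$ of $\E''$ is $h(X_2')=h(C_2)=(M+C_2)/2$, consistent with the rest of the configuration.

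There is essentially no computational obstacle here; the entire content is the homothety of Proposition~\ref{prop:persM}, and the rest is formal. The one point deserving care is the precise meaning of the symbolic identity $\E''=\tfrac12\E'$: it should be read as equality of the two ellipses as point sets under the ratio-$1/2$ homothety $h$, not as a naive scaling of the defining polynomial. If a purely algebraic confirmation is preferred, one could instead substitute the explicit centers $P_i''$ from Appendix~\ref{app:cusps} into the standard formula for the Steiner circumellipse and verify that the resulting semi-axes are exactly half of $a'=\tfrac{3c^2}{2b}$ and $b'=\tfrac{3c^2}{2a}$; but this merely reproduces, with more effort, what the homothety argument delivers at once.
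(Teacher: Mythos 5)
Your proposal is correct and follows exactly the paper's own argument: the paper's proof is the one-liner ``This follows from the homothety of $T'$ and $T''$,'' i.e.\ Proposition~\ref{prop:persM}, which is precisely your route. You have merely made explicit the affine covariance of the Steiner circumellipse that the paper leaves implicit, which is a reasonable and accurate filling-in of the same argument.
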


\begin{proof}
This follows from the homothety of $T'$ and $T''$.
\end{proof}

\section{Relations between \texorpdfstring{$T,T',T''$}{T,T',T''}}
\label{sec:triangles}
As before we identify Triangle centers as $X_k$ after Kimberling's Encyclopedia \cite{etc}.

\begin{proposition}\label{prop:X3X2pp}
The circumcenter $X_3$ of $T$ coincides with the barycenter $X_2''$ of $T''$.
\end{proposition}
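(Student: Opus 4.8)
The plan is to show that both $X_3$ and $X_2''$ equal the midpoint $(M+C_2)/2$, assembling results already proved rather than computing anything. First I would identify $X_3$: by Theorem~\ref{th:circle_5} the circumcircle of $T$ is exactly the circle $\K$ through $M,C_2,P_1,P_2,P_3$, and that theorem's proof already records its center as $(M+C_2)/2$. Hence $X_3 = (M+C_2)/2$ with no further work, since the circumcenter of $T$ is by definition the center of its circumcircle.

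Next I would pin down $X_2''$ via the homothety of Proposition~\ref{prop:persM}. The proof of that proposition establishes that each center $P_i''$ is the midpoint of $MP_i'$, i.e. $P_i'' = \tfrac{1}{2}(M + P_i')$. Because the barycenter of a triangle is an affine function of its vertices, averaging over $i$ gives
\[
X_2'' = \frac{1}{3}\sum_{i=1}^{3} P_i'' = \frac{1}{2}\left(M + \frac{1}{3}\sum_{i=1}^{3} P_i'\right) = \frac{1}{2}(M + X_2'),
\]
where $X_2'$ is the barycenter of the cusp triangle $T'$. Equivalently, this is just the statement that the barycenter commutes with the homothety of center $M$ and ratio $\tfrac{1}{2}$ carrying $T'$ to $T''$. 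Finally I would invoke Proposition~\ref{prop:c2-delta}, which asserts $X_2' = C_2$; substituting yields $X_2'' = \tfrac{1}{2}(M + C_2) = X_3$, as claimed.

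There is no genuine analytic obstacle here: the identity dissolves once one recognizes that each of the two triangle centers is governed entirely by the pair $\{M,C_2\}$. The only point requiring care is the bookkeeping around the direction of the homothety in Proposition~\ref{prop:persM} — one must confirm that $P_i'' = \tfrac{1}{2}(M+P_i')$ (the osculating-circle center is the midpoint of $M P_i'$) rather than the reverse, so that the factor $\tfrac{1}{2}$ lands on the correct side. As a fallback, one could instead verify $X_3 = X_2''$ by direct substitution of the explicit coordinates of $P_i$ and $P_i''$ from Appendix~\ref{app:cusps}, but the synthetic route above avoids all such computation.
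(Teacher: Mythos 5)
Your proof is correct, but it follows a genuinely different route from the paper's. The paper proves Proposition~\ref{prop:X3X2pp} by direct computation: it substitutes the explicit coordinates of $P_i$ and $P_i''$ from Appendix~\ref{app:cusps} and finds that both centers equal $\frac{c^2}{4}\left[\frac{\cos u}{a},-\frac{\sin u}{b}\right]$. You instead assemble three earlier results: the circumcircle of $T$ is the circle $\K$ whose center is $(M+C_2)/2$ (recorded in the proof of Theorem~\ref{th:circle_5}, where $MC_2$ is shown to be a diameter); each $P_i''$ is the midpoint of $MP_i'$ (proof of Proposition~\ref{prop:persM}); and $X_2'=C_2$ (Proposition~\ref{prop:c2-delta}). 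Averaging the midpoint relation over $i$, which is legitimate because the barycenter is an affine function of the vertices, gives $X_2''=\tfrac12(M+X_2')=\tfrac12(M+C_2)=X_3$. All three ingredients appear before Section~\ref{sec:triangles} and none depends on the present proposition, so there is no circularity; your caution about the direction of the homothety is also resolved correctly, since the paper's proof of Proposition~\ref{prop:persM} states precisely that $P_i''$ is the midpoint of $MP_i'$. Your synthetic route buys conceptual insight --- the two centers coincide because each is forced to be the midpoint of the segment $MC_2$ --- and it delivers the paper's subsequent corollary (the homothety with center $M$ and factor $2$ sends $X_2''$ to $X_2'=C_2$) as an intermediate step rather than a consequence. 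What the paper's computation buys, by contrast, is the explicit coordinate formula for the common point, which your argument only yields after one further (easy) evaluation of $(M+C_2)/2$ using Equation~\ref{eqn:cbar}.
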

\begin{proof} Follows from direct calculations using the coordinate expressions of $P_i$ and $P_{i}''$. In fact,
\[ X_2''=X_3=  \frac{c^2}4 \left[ \frac{\cos u}a, - \frac{\sin u}b \right].
\]
\end{proof}
\begin{corollary} The homothety with center $M$ and factor $2$ sends $X_2''$ to $X_2'=C_2$.

\end{corollary}

\begin{proposition}
The lines joining a cusp $P_i'$ to its preimage $P_i$ concur at $\Delta$'s center of area $C_2$.
\end{proposition}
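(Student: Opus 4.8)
The plan is to give a purely synthetic argument resting on two perpendicularity facts, both already established in the paper, so that no coordinate computation with the Appendix formulas is needed. The key observation is that, for each $i$, the two candidate lines through the pre-image $P_i$---namely the side $P_iP_i'$ of the cusp construction and the segment $P_iC_2$---are each perpendicular to the \emph{same} line $MP_i$, and are therefore one and the same line.

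First I would recall why $MP_i\perp P_iP_i'$. By the defining construction of the negative pedal curve, the cusp $P_i'=\Delta_u(t_i)$ lies on the envelope line $L(t_i)$, which passes through the pre-image $P_i=P(t_i)$ and is orthogonal to $P_i-M$. Hence the segment $P_iP_i'$ lies along $L(t_i)$, giving $MP_i\perp P_iP_i'$; this is precisely the right angle $\angle MP_iP_i'=90^\circ$ already used in the proof of Proposition~\ref{prop:persM}. Next I would invoke Theorem~\ref{th:circle_5}, which asserts that the five points $M,C_2,P_1,P_2,P_3$ lie on the circle $\K$ and that $MC_2$ is a diameter of $\K$. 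Since $P_i$ lies on the circle whose diameter is $MC_2$, Thales' theorem yields $\angle MP_iC_2=90^\circ$, i.e. $MP_i\perp P_iC_2$.

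Combining the two, both $P_i'$ and $C_2$ lie on the unique line through $P_i$ perpendicular to $MP_i$. Therefore $P_i,P_i',C_2$ are collinear for each $i=1,2,3$, and so the three lines $P_iP_i'$ all pass through the common point $C_2$, which is exactly the claimed concurrence. The only point requiring care is the degenerate configuration in which $P_i$ coincides with $M$ (so that $MP_i$ and the perpendicular direction are undefined); by the explicit $t_i=-u/3-(i-1)\tfrac{2\pi}{3}$ this happens only at the isolated vertex positions of $M$, and there the statement follows by continuity from nearby parameter values. Apart from this limiting remark there is no genuine obstacle: the result is essentially a repackaging of the diameter property of $\K$ together with the orthogonality built into the envelope, so the ``hard part'' is simply recognizing that the two right angles force the three lines to collapse onto one.
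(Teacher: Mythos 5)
Your proof is correct and is essentially the same as the paper's: both arguments combine the right angle $\angle MP_iC_2=90^{\circ}$ (from $MC_2$ being a diameter of the circumcircle $\K$ in Theorem~\ref{th:circle_5}) with the right angle $\angle MP_iP_i'=90^{\circ}$ built into the envelope construction, forcing $P_i,P_i',C_2$ to be collinear. Your added justification of the construction step and the continuity remark for the vertex positions of $M$ are fine but do not change the approach.
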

\begin{proof}
From Theorem~\ref{th:circle_5}, points $M$ and $C_2$ both lie on the circumcircle of $T$ and form a diameter of this circle. Thus, for each $i=1,2,3$, we have $\angle M P_i C_2=90^{\circ}$. By construction, $\angle M P_i P_i'=90^{\circ}$, so $P_i$, $P_i'$, and $C_2$ are collinear as desired.
\end{proof}

Referring to Figure~\ref{fig:persp-t-tp}(left):
\begin{corollary}  
 $C_2=X_2'$ is the perspector of $T'$ and $T$.
\label{perspTTp}
 
\end{corollary}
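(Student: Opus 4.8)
The plan is to recognize that this corollary is essentially a definitional repackaging of the proposition that immediately precedes it, so almost all of the substantive work is already done. By definition, a point $Q$ is a \emph{perspector} of two triangles when the three lines joining their corresponding vertices all pass through $Q$; equivalently, the two triangles are in central perspective from $Q$. First I would fix the correspondence of vertices: since each $P_i$ is defined as the pre-image on $\E$ of the cusp $P_i'$, the natural pairing between the vertices of $T = P_1 P_2 P_3$ and $T' = P_1' P_2' P_3'$ is $P_i \leftrightarrow P_i'$ for $i = 1,2,3$.

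Under this pairing, the three lines joining corresponding vertices are exactly the lines $P_i P_i'$. The immediately preceding proposition asserts that these three lines concur at $C_2 = X_2'$, the center of area of $\Delta$. The key step, then, is simply to invoke that result and read it through the definition of a perspector: the concurrency of $P_1 P_1'$, $P_2 P_2'$, $P_3 P_3'$ at $C_2$ is precisely the statement that $T$ and $T'$ are in perspective from $C_2$. Hence $C_2 = X_2'$ is the perspector of $T'$ and $T$, as claimed.

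Since the essential concurrency is already established, there is no genuine obstacle here; the only point requiring a moment's care is ensuring that all three lines meet at a \emph{single} common point (rather than being merely pairwise concurrent) and that the configuration is non-degenerate, so that a perspector is actually well-defined. Both are guaranteed by the preceding proposition, whose proof shows that $P_i$, $P_i'$, and $C_2$ are collinear for each $i$ via the right angles $\angle M P_i C_2 = \angle M P_i P_i' = 90^\circ$ inherited from Theorem~\ref{th:circle_5}. Thus the corollary follows immediately, and the proof amounts to little more than naming the already-proven concurrency point as the perspector.
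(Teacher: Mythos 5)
Your proposal is correct and matches the paper's own (implicit) argument: the corollary is stated without proof precisely because it is the immediately preceding proposition---the concurrence of the lines $P_iP_i'$ at $C_2$---restated in the language of perspectors, together with the earlier identification $C_2=X_2'$ from Proposition~\ref{prop:c2-delta}. Your added remarks on vertex correspondence and non-degeneracy are sound but not needed beyond what the proposition already guarantees.
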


\begin{lemma}
Given a triangle $\T$, and its Steiner Ellipse $\Sigma$, the normals at each vertex pass through the Orthocenter of $\T$, i.e., they are the altitudes.
\label{lem:steiner_x4}
\end{lemma}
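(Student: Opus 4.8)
The plan is to reduce the statement to an explicit coordinate computation, exploiting the fact that the Steiner circumellipse $\Sigma$ pins down the vertices up to a single free parameter. First I would normalize coordinates so that $\Sigma$ is axis-aligned and centered at the origin, writing $\Sigma:\ x^2/A^2+y^2/B^2=1$. Since the center of $\Sigma$ is the centroid of $\T$, the affine map $(x,y)\mapsto(x/A,y/B)$ carries $\Sigma$ to the unit circle and $\T$ to a triangle inscribed in it whose centroid is still the origin; three unit vectors summing to zero must be equally spaced, so the vertices of $\T$ are $V_i=(A\cos\phi_i,\,B\sin\phi_i)$ with $\phi_i=\phi+(i-1)\tfrac{2\pi}{3}$ for a single free angle $\phi$.

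Next I would locate the orthocenter. Because the centroid sits at the origin, the Euler-line relation gives $H=-2\,O_c$, where $O_c$ is the circumcenter of $\T$; this trades the orthocenter for the easier circumcenter. Writing the equal-power conditions $|O_c-V_i|^2=\text{const}$ and contracting them against $1,\cos\phi_i,\sin\phi_i$, the equally-spaced-angle sums ($\sum\cos\phi_i=\sum\sin\phi_i=0$, $\sum\cos^2\phi_i=\sum\sin^2\phi_i=\tfrac32$, $\sum\cos^3\phi_i=\tfrac34\cos3\phi$, and so on) collapse everything to
\[ O_c=\frac{A^2-B^2}{4}\left(\frac{\cos3\phi}{A},\ \frac{\sin3\phi}{B}\right),\qquad H=-\frac{A^2-B^2}{2}\left(\frac{\cos3\phi}{A},\ \frac{\sin3\phi}{B}\right). \]

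Finally I would check that $H$ lies on the normal to $\Sigma$ at each $V_i$. The gradient gives the normal direction $\mathbf n_i\propto(\cos\phi_i/A,\ \sin\phi_i/B)$, so the claim is the vanishing of the determinant $\det[\,H-V_i,\ \mathbf n_i\,]$. Expanding it, the cross term reduces to
\[ \frac{A^2-B^2}{2AB}\bigl(\cos\phi_i\sin3\phi-\sin\phi_i\cos3\phi\bigr)+\cos\phi_i\sin\phi_i\Bigl(\frac{B}{A}-\frac{A}{B}\Bigr), \]
and the congruence $3\phi-\phi_i\equiv 2\phi_i\ (\mathrm{mod}\ 2\pi)$ turns the first bracket into $\sin 2\phi_i=2\cos\phi_i\sin\phi_i$, after which the two terms cancel identically. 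Hence each normal passes through $H$; since the line $V_iH$ joining a vertex to the orthocenter is precisely the altitude from $V_i$, the normals coincide with the altitudes, which is the assertion.

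The step I expect to demand the most care is conceptual rather than computational: the statement is metric — it speaks of normals and of the orthocenter, neither of which is affine-invariant — so one cannot simply transport it through the affine map to the equilateral/circular model, where it is trivial. The computation must be done honestly in the non-conformal coordinates of $\Sigma$, and the only genuine labor is the trigonometric bookkeeping, which the equally-spaced-angle identities and the relation $3\phi-\phi_i\equiv 2\phi_i$ render routine.
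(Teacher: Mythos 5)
Your proof is correct, but it follows a genuinely different and considerably heavier path than the paper's, which is a one-line affine argument: the tangent to the Steiner circumellipse at a vertex is parallel to the opposite side (the classical property the paper cites), and since the normal is perpendicular to the tangent, it is the line through the vertex perpendicular to the opposite side, i.e.\ the altitude, hence it passes through the orthocenter. This also answers the methodological worry in your closing paragraph: you are right that the metric statement cannot be transported wholesale to the circle/equilateral model, but the paper transports only the affine-invariant ingredient (tangency plus parallelism, trivial for the circle) and performs the single metric step (normal $\perp$ tangent) back in the original coordinates. Your computation, in exchange, is self-contained where the paper leans on a quoted fact, and it all checks out: the equally-spaced parametrization $V_i=(A\cos\phi_i,B\sin\phi_i)$, the relation $H=-2O_c$ for centroid at the origin, and the vanishing of $\det[\,H-V_i,\ \mathbf{n}_i\,]$ via $3\phi\equiv 3\phi_i\ (\mathrm{mod}\ 2\pi)$ are all valid. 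Moreover, your intermediate circumcenter formula is a nontrivial byproduct: specialized to $A=a$, $B=b$, $\phi=-u/3$ it is exactly the expression $X_3=\frac{c^2}{4}\left[\frac{\cos u}{a},-\frac{\sin u}{b}\right]$ appearing in Proposition~\ref{prop:X3X2pp}. One small caveat: for a right triangle the orthocenter coincides with the right-angle vertex, so at that vertex your determinant identity holds vacuously ($H-V_i=0$) and does not by itself identify the normal with the altitude there; the parallel-tangent argument (or a direct check of perpendicularity to the opposite side) is needed to cover that degenerate case.
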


\begin{proof}
This stems from the fact that the tangent to $\Sigma$ at a vertex of $\T$ is parallel to opposide side of $\T$ \cite[Steiner Circumellipse]{mw}.
\end{proof}

Referring to Figure~\ref{fig:persp-t-tp}(right):

\begin{proposition} The orthocenter 
$X_4$ is the perspector of $T$ and $T'' $. Equivalently, a line connecting a vertex of $T$ to the respective vertex of $T''$ is perpendicular to the opposite side of $T$.
\label{prop:perspTTpp}
\end{proposition}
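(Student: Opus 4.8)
The plan is to identify each line $P_iP_i''$ with an altitude of the triangle $T$, and then invoke the concurrence of altitudes at the orthocenter. Two ingredients drive the argument: first, the fact recorded in Section~\ref{sec:preimg_tri} that $\E$ is the Steiner circumellipse of $T$; second, Lemma~\ref{lem:steiner_x4}, which says that the normal to a triangle's Steiner ellipse at a vertex is precisely the altitude from that vertex.

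First I would observe that $P_i''$, defined as the center of the osculating circle $\K_i$ at $P_i$ (Proposition~\ref{prop:osculate}), is by construction the center of curvature of $\E$ at $P_i$; equivalently $P_i''=\E^{*}(t_i)$ lies on the evolute. A center of curvature always lies on the normal line to the curve at the corresponding point, so the line $P_iP_i''$ is exactly the normal to $\E$ at $P_i$. This is the one step that must be stated carefully, but it rests only on the elementary differential-geometric fact that the osculating circle is centered on the normal.

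Next, because $\E$ is the Steiner ellipse of $T$, Lemma~\ref{lem:steiner_x4} tells me that this normal at the vertex $P_i$ is the altitude of $T$ dropped from $P_i$, i.e.\ the line through $P_i$ perpendicular to the opposite side $P_jP_k$. The three altitudes of any triangle concur at its orthocenter $X_4$, so the three lines $P_iP_i''$ concur at $X_4$, which is exactly the assertion that $X_4$ is the perspector of $T$ and $T''$. The ``equivalently'' clause is then immediate, being just the defining property of an altitude.

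I expect no genuine obstacle here beyond the identification $P_iP_i''=$ (normal to $\E$ at $P_i$) discussed above; once that is in hand, the result is a one-line consequence of Lemma~\ref{lem:steiner_x4}. Should a reader prefer a computational check, one could instead substitute the explicit coordinates of $P_i$ and $P_i''$ from Appendix~\ref{app:cusps} and verify directly that the direction $P_i-P_i''$ is orthogonal to the opposite side $P_j-P_k$ and that the three lines share a common point, but the Steiner-ellipse route sidesteps this algebra entirely and also explains \emph{why} the common point is the orthocenter.
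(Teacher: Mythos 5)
Your proof is correct and follows essentially the same route as the paper: both arguments note that $\E$ is the Steiner ellipse of $T$, that the lines $P_iP_i''$ are the normals to $\E$ at the $P_i$ (since the $P_i''$ are osculating-circle centers), and then conclude via Lemma~\ref{lem:steiner_x4}. Your write-up merely makes explicit the differential-geometric step (center of curvature lies on the normal) that the paper leaves implicit.
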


\begin{proof}
Since $T$ has fixed $X_2$, $\E$ is its Steiner Ellipse. The normals to the latter at $P_i$
pass through centers $P_i''$ since these are osculating circles. So by  Lemma~\ref{lem:steiner_x4} the proof follows.
\end{proof}

\begin{definition} According to J.\ Steiner \cite[p.~55]{Gallatly}, two triangles $ABC$ and $DEF$ are said to be \textit{orthologic} if the perpendiculars from $A$ to $EF$, from $B$ to $DF$, and from $C$ to $DE$ are concurrent. Furthermore, if this holds, then the perpendiculars from $D$ to $BC$, from $E$ to $AC$, and from $F$ to $AB$ are also concurrent. Those two points of concurrence are called the \textit{centers of orthology} of the two triangles \cite{patrascu2020}.
\end{definition}

Note that orthology is symmetric but not transitive \cite[p. 37]{patrascu2020}, see Figure~\ref{fig:non-transitive} for a non-transitive example involving a reference, pedal, and antipedal triangles.

\begin{figure}
    \centering
    \includegraphics[width=.8\textwidth]{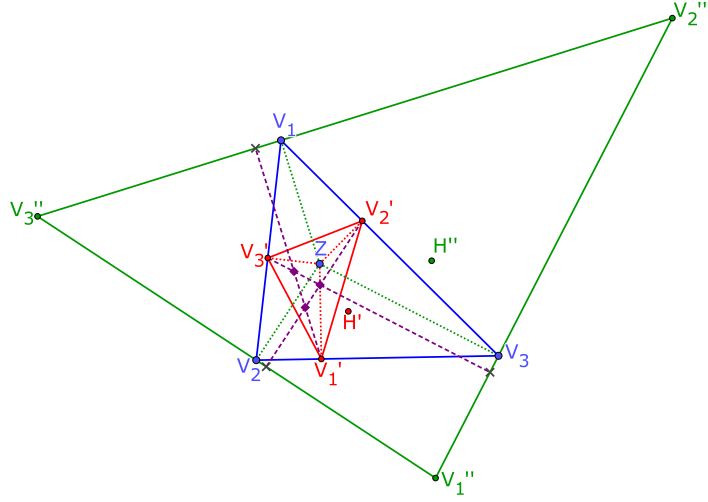}
    \caption{Consider a reference triangle $\T$ (blue), and its pedal $\T'$ (red) and antipedal $\T''$ (green) triangles with respect to some point $Z$. Construction lines for both pedal and antipedal  (dashed red, dashed green) imply that $Z$ is an orthology center simultaneousy for both $\T,\T'$ and $\T,\T''$, i.e. these pairs are orthologic. Also shown are $H'$ and $H''$, the 2nd orthology centers of said pairs (construction lines omitted). Non-transitivity arises from the fact that perpendiculars dropped from the vertices of $\T'$ to the sides of $\T''$ (dashed purple, feet are marked X) are non-concurrent (purple diamonds mark the three disjoint intersections), i.e., $\T',\T''$ are not orthologic.} 
    \label{fig:non-transitive}
\end{figure}

\begin{lemma}
Let $-P_i$ denote the reflection of $P_i$ about $O=X_2$ for $i=1,2,3$. Then the line from $M$ to $-P_1$ is perpendicular to the line $P_2''P_3''$, and analogously for $-P_2$, and $-P_3$.
\label{lem:perp-pi}
\end{lemma}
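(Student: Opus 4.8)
The plan is to turn the perpendicularity claim into a single incidence statement. Since the radical axis of two circles is perpendicular to their line of centers, the radical axis of the osculating circles $\K_2$ and $\K_3$ is perpendicular to $P_2''P_3''$; moreover $M\in\K_2\cap\K_3$, so $M$ lies on this radical axis. As the perpendicular to $P_2''P_3''$ through $M$ is unique, the radical axis \emph{is} that perpendicular. Hence proving that the line $M(-P_1)$ is perpendicular to $P_2''P_3''$ is equivalent to showing that $-P_1$ lies on the radical axis of $\K_2$ and $\K_3$; the claims for $-P_2,-P_3$ then follow by cyclic relabelling of the indices.

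To verify that incidence I would use the explicit circle from Proposition~\ref{prop:osculate}. With $t_i=-u/3-(i-1)\tfrac{2\pi}{3}$, the osculating circle at $P_i$ has center $\E^{*}(t_i)=P_i''$ and, in the normalisation used there for $\K_1$, reads
\[
\K_i(x,y)=2ab(x^2+y^2)-4bc^2\cos^3 t_i\,x+4ac^2\sin^3 t_i\,y+ab\bigl(3c^2\cos 2t_i-a^2-b^2\bigr)=0.
\]
Because the three circles share the block $2ab(x^2+y^2)$, the difference $\K_2-\K_3=0$ is a line — the radical axis — and I would substitute the reflected pre-image $-P_1=(-a\cos t_1,-b\sin t_1)$ into it. Dividing out the common factor $4abc^2$, the incidence reduces to the scalar identity
\[
\cos t_1(\cos^3 t_2-\cos^3 t_3)-\sin t_1(\sin^3 t_2-\sin^3 t_3)+\tfrac34(\cos 2t_2-\cos 2t_3)=0.
\]

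I would establish this identity with the triple-angle formulas $\cos^3\theta=\tfrac14(3\cos\theta+\cos 3\theta)$ and $\sin^3\theta=\tfrac14(3\sin\theta-\sin 3\theta)$. The decisive point is that $3t_2$ and $3t_3$ differ by $2\pi$, so the $\cos 3t_i$ and $\sin 3t_i$ terms are equal and cancel in the differences; applying sum-to-product to what remains leaves only multiples of $\sin t_1\cos t_1$, which cancel exactly. I expect the main (purely routine) obstacle to be this trigonometric bookkeeping rather than any conceptual difficulty. One should also flag the degenerate values with $\cos(2u/3)=0$, where $M$ coincides with $-P_1$ and the line $M(-P_1)$ is undefined; these are finitely many $u$ and are handled by continuity. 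As a shortcut that avoids the constant term altogether, one may compute the direction of the line directly: sum-to-product gives $-P_1-M=-2\cos(2u/3)\,(a\cos(u/3),b\sin(u/3))$, so $M(-P_1)$ is parallel to the position vector of the ellipse point at parameter $u/3$, and its inner product with $P_3''-P_2''$ vanishes by the same cancellation.
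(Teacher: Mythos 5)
Your proposal is correct, but its main line of argument is genuinely different from the paper's. The paper's proof is a one-line coordinate verification: using the explicit expressions for $M$, $P_i=\E(t_i)$ and $P_i''=\E^{*}(t_i)$, it checks directly that $\langle M+P_1,\,P_2''-P_3''\rangle=0$ --- which is exactly the computation you relegate to your final ``shortcut'' remark (your direction formula $-P_1-M=-2\cos(2u/3)\,(a\cos(u/3),b\sin(u/3))$ and the vanishing dot product with $P_3''-P_2''$ both check out against the Appendix coordinates). Your primary route instead recasts the perpendicularity as an incidence statement: since $M\in\K_2\cap\K_3$ by Proposition~\ref{prop:osculate}, the radical axis of $\K_2,\K_3$ is the unique perpendicular to $P_2''P_3''$ through $M$, so it suffices to show that $-P_1$ has equal power with respect to $\K_2$ and $\K_3$. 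Your generalization of the paper's formula for $\K_1$ to $\K_2,\K_3$ is legitimate (the paper's formula holds for every $u$, hence describes the osculating circle at an arbitrary parameter), the resulting scalar identity is stated correctly, and its proof is sound: the decisive cancellation $3t_2\equiv 3t_3\pmod{2\pi}$ kills the triple-angle terms, and what remains does collapse to $2\sin t_1\cos t_1-\sin 2t_1=0$ as you say. What your framing buys is geometric content that the paper's computation hides: the line $M(-P_1)$ is precisely the radical axis of $\K_2$ and $\K_3$, which dovetails nicely with the paper's separate proposition that the sides of $T'$ are perpendicular to these radical axes. What the paper's approach buys is brevity and independence: it needs nothing beyond the evolute parametrization, whereas your reduction leans on Proposition~\ref{prop:osculate} and on the circle equations. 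Your flagging of the degenerate positions $\cos(2u/3)=0$, where $-P_1=M$ and the line is undefined, is a point the paper silently ignores; handling it by continuity is appropriate.
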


\begin{proof} This follows directly from the coordinate
expressions for points $M$, $P_i=\E(t_i)$ and $P_i''=\E^{*}(t_i)$.
It follows that $\langle M+P_1,P_2''-P_3''\rangle=0.$
\end{proof}

Referring to Figure~\ref{fig:persp-t-tp}(right):

\begin{theorem}
\label{th:orthologic}
Triangles $T$ and $T'$ are orthologic and their centers of orthology are the reflections  $X_{671}$ of $M$  on $X_2$ and on $X_4$.
\end{theorem}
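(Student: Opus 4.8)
The plan is to establish orthology in two stages: first verify the defining concurrence condition, then identify the two orthology centers explicitly. Orthology of $T$ and $T'$ means that the perpendiculars dropped from each vertex $P_i$ of $T$ to the opposite side of $T'$ are concurrent, and (by the symmetry of orthology, already noted in the excerpt) the perpendiculars from each $P_i'$ to the opposite side of $T$ are then automatically concurrent as well. So the real content is (a) proving concurrence in one direction and (b) pinning down that both centers of concurrence are the claimed reflections of $M$.

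First I would exploit the perspectivity structure already in hand. By Corollary~\ref{perspTTp}, $T$ and $T'$ are perspective from $C_2=X_2'$, and by Proposition~\ref{prop:perspTTpp} together with Lemma~\ref{lem:steiner_x4}, the normals to the Steiner ellipse $\E$ at the $P_i$ (which are the altitudes of $T$, passing through its orthocenter $X_4$) also pass through the osculating-circle centers $P_i''$. Since $T'$ is the image of $T''$ under the homothety of center $M$ and ratio $2$ (Proposition~\ref{prop:persM}), each side $P_j'P_k'$ of $T'$ is parallel to the corresponding side $P_j''P_k''$ of $T''$. This reduces the perpendicularity statements about $T'$ to perpendicularity statements about $T''$, which is exactly the regime where Lemma~\ref{lem:perp-pi} applies: it asserts $\langle M+P_1,\,P_2''-P_3''\rangle=0$, i.e. the line from $M$ to the antipode $-P_1$ is perpendicular to side $P_2''P_3''$, hence to side $P_2'P_3'$ of $T'$.

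The key step is then to translate Lemma~\ref{lem:perp-pi} into a concurrence at a named center. The perpendicular from $P_1$ to side $P_2'P_3'$ is parallel to the line $M(-P_1)$ of the lemma; I would show that the three such perpendiculars from $P_1,P_2,P_3$ all pass through the reflection of $M$ about $X_2=O$. Concretely, writing $M' = 2X_2 - M = -M$ (the reflection of $M$ on the center), the vector $P_i - M' = P_i + M$, and Lemma~\ref{lem:perp-pi} gives $\langle P_i+M,\,P_j'-P_k'\rangle = 0$ for the opposite side; thus the perpendicular from $P_i$ to $P_j'P_k'$ passes through $M'=-M$, the reflection of $M$ on $X_2$. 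This identifies the first orthology center. For the second center, I would invoke the symmetry of orthology to get concurrence of the perpendiculars from the $P_i'$ to the sides of $T$, and then argue (by direct coordinate computation, or by recognizing the reflection of $M$ on the orthocenter $X_4$ as the second center through the altitude structure of Proposition~\ref{prop:perspTTpp}) that this second center is the reflection of $M$ on $X_4$. The point $X_{671}$ is named because the reflection of the Steiner point $M=X_{99}$ on $X_2$ is the known triangle center $X_{671}$ of $T$.

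The main obstacle will be the bookkeeping of the index-matching and the reflection targets: one must be careful that the perpendicular from $P_i$ lands on the correct opposite side $P_j'P_k'$ (not $P_i'$'s own side), and that the two distinct centers ($-M$ via $X_2$, and $2X_4-M$ via $X_4$) really correspond to the two directions of the orthology relation rather than being conflated. I expect the cleanest route is to lean entirely on Lemma~\ref{lem:perp-pi} and the homothety $T''\to T'$ for the first center, reserving an explicit coordinate check (using the $t_i=-u/3-(i-1)2\pi/3$ and the closed forms for $P_i$, $P_i'$, $X_4$) only to confirm the second center equals the reflection of $M$ on $X_4$ and to verify the identification with $X_{671}$.
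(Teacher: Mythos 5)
Your proposal is correct and follows essentially the same route as the paper's proof: the first orthology center is obtained from Lemma~\ref{lem:perp-pi} together with the side-parallelism $P_j''P_k'' \parallel P_j'P_k'$ coming from the homothety of Proposition~\ref{prop:persM}, giving concurrence at $-M$ (the reflection of $M$ on $X_2$, i.e.\ $X_{671}$), and the second center comes from the altitude structure of Proposition~\ref{prop:perspTTpp} transported by that same homothety. The only (cosmetic) difference is that the paper derives the second concurrence directly, by noting the homothety with center $M$ and ratio $2$ carries the concurrence of the perpendiculars from the $P_i''$ at $X_4$ to a concurrence of the perpendiculars from the $P_i'$ at the reflection $2X_4-M$, rather than first invoking the symmetry of orthology and identifying the center afterwards.
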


\begin{figure}
\centering
\begin{minipage}[b]{0.45\linewidth}
\frame{\includegraphics[height=.25\textheight,trim=10 0 40 30 ,clip]{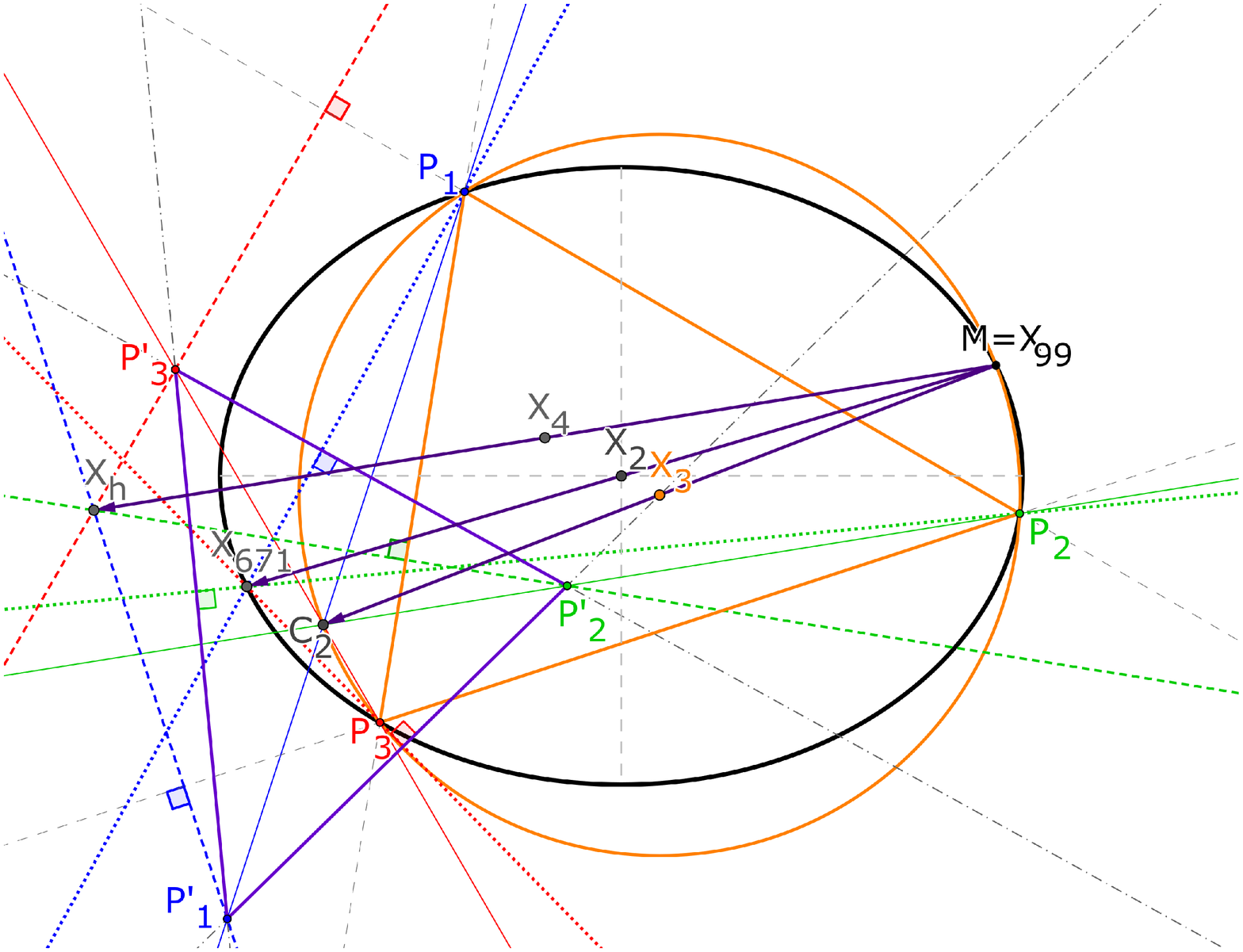}}
\end{minipage}
\quad
\begin{minipage}[b]{0.45\linewidth}
\frame{\includegraphics[height=.25\textheight,trim=75 0 60 30 ,clip]{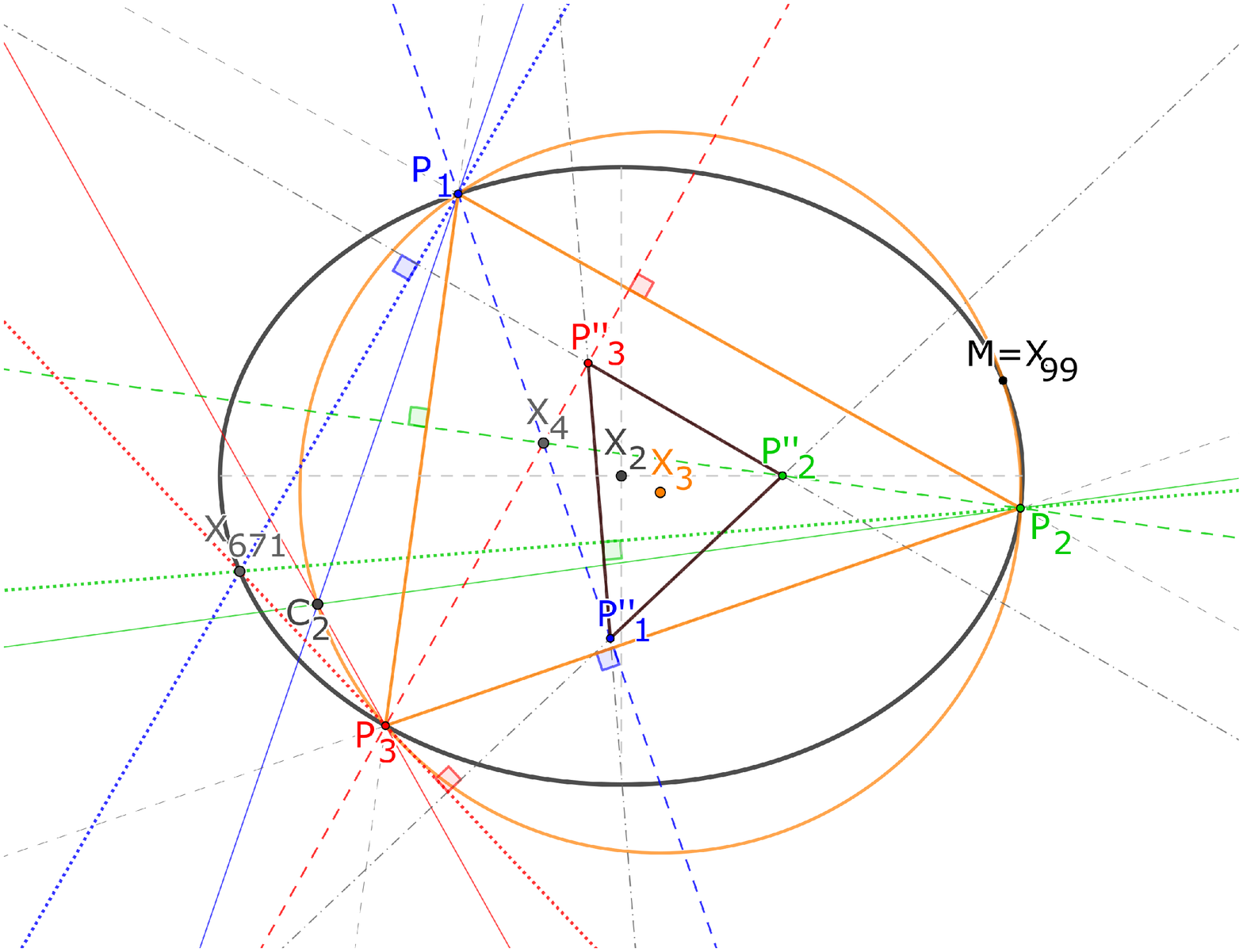}}
\end{minipage}
\caption{\textbf{Left:} $T$ and $T'$ are perspective on $X_2$. They are also orthologic, with orthology centers $X_{671}$ and the reflection of $X_{99}$ on $X_4$. \textbf{Right:} $T$ and $T''$ are perspective on $X_4$. They are also orthologic, with orthology centers $X_4$ and $X_{671}$.}
\label{fig:persp-t-tp}
\end{figure}

\begin{proof}
We denote by $X_{671}$ the reflection of $M=X_{99}$ on $O=X_2$. From Lemma~\ref{lem:perp-pi}, the line through $M$ and $-P_1$ is perpendicular to $P_2'' P_3''$. Reflecting about $O=X_2$, the line $P_1 X_{671}$ is also perpendicular to $P_2'' P_3''$. Since $P_2'' P_3'' \parallel P_2' P_3'$ from the homothety, we get that $P_1 X_{671}\perp P_2' P_3'$. This means the perpendicular from $P_1$ to $P_2' P_3'$ passes through $X_{671}$. Analogously, the perpendiculars from $P_2$ to $P_1' P_3'$ and from $P_3$ to $P_1' P_2'$ also go through $X_{671}$. Therefore $T$ and $T'$ are orthologic and $X_{671}$ is one of their two orthology centers.

Let $X_h$ be the reflection of $M$ on $X_4$. From Proposition~\ref{prop:perspTTpp}, the line through $P_1$ and $P_1''$ passes through the orthocenter $X_4$ of $T$, that is, $P_1''$ is on the $P_1$-altitude of $T$. This means that the perpendicular from $P_1''$ to the line $P_2 P_3$ passes through $X_4$. Applying the homothety with center $M$ and ratio 2, the perpendicular from $X_1'$ to $X_2 X_3$ passes through $X_h$. Analogously, the perpendiculars from $X_2'$ to $X_1 X_3$ and from $X_3'$ to $X_1 X_2$ also pass through $X_h$. Hence, $X_h$ is the second orthology center of $T$ and $T'$.
\end{proof}

\begin{theorem}
\label{th:orthologic2}
Triangles $T$ and $T''$ are orthologic and their centers of orthology are $X_4$ and the reflection  $X_{671}$   of $M$ on $X_2$.
\end{theorem}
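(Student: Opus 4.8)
The plan is to verify the two concurrences required by the orthology definition separately, identifying one center with each. This argument will closely mirror the proof of Theorem~\ref{th:orthologic} for the pair $T,T'$, but it is in fact more direct: both Lemma~\ref{lem:perp-pi} and Proposition~\ref{prop:perspTTpp} speak \emph{directly} about the osculating-circle centers $P_i''$, so no appeal to the $T'$--$T''$ homothety is needed here.

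First I would establish that $X_4$ is the orthology center obtained by dropping perpendiculars from the vertices of $T''$ to the sides of $T$. By Proposition~\ref{prop:perspTTpp}, for each $i$ the line $P_i P_i''$ passes through the orthocenter $X_4$ of $T$ and is perpendicular to the opposite side $P_j P_k$. Since $P_i''$ lies on this very line, the perpendicular dropped from $P_i''$ onto $P_j P_k$ coincides with $P_i P_i''$ and hence passes through $X_4$. Ranging over $i=1,2,3$ shows these three perpendiculars concur at $X_4$.

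Next I would establish that $X_{671}$, the reflection of $M=X_{99}$ on $O=X_2$, is the complementary center, obtained by dropping perpendiculars from the vertices of $T$ onto the sides of $T''$. Working in coordinates centered at $O$, one has $X_{671}=-M$. Lemma~\ref{lem:perp-pi} gives $\overline{M(-P_1)}\perp P_2''P_3''$, where $-P_1$ is the reflection of $P_1$ about $O$. Applying the point reflection $v\mapsto -v$ about $O$, which is an isometry and hence preserves perpendicularity, sends $M\mapsto X_{671}$ and $-P_1\mapsto P_1$; thus the line $\overline{P_1 X_{671}}$ is parallel to $\overline{M(-P_1)}$ and therefore still perpendicular to $P_2''P_3''$. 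Consequently the perpendicular from $P_1$ to $P_2''P_3''$ passes through $X_{671}$, and the analogous statements for $P_2$ and $P_3$ follow verbatim, so the three perpendiculars concur at $X_{671}$. Together, the two concurrences show that $T$ and $T''$ are orthologic with centers of orthology $X_4$ and $X_{671}$.

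I expect no serious obstacle, since both ingredients are already in hand; the only point demanding care is the bookkeeping of the point reflection, namely recognizing $X_{671}=-M$ and verifying that reflecting the line of Lemma~\ref{lem:perp-pi} about $O$ swaps $-P_1$ with $P_1$ while leaving its perpendicularity to $P_2''P_3''$ intact. One could alternatively invoke the symmetry built into the orthology definition to deduce the second concurrence for free once the first is shown, but identifying that second point \emph{as $X_4$} still requires the argument via Proposition~\ref{prop:perspTTpp}, so both steps are genuinely needed to pin down the two named centers.
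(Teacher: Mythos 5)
Your proposal is correct and follows essentially the same route as the paper's own proof: Proposition~\ref{prop:perspTTpp} gives the concurrence at $X_4$ of the perpendiculars from the $P_i''$ to the sides of $T$, and Lemma~\ref{lem:perp-pi} reflected about $O=X_2$ gives the concurrence at $X_{671}$ of the perpendiculars from the $P_i$ to the sides of $T''$. Your observation that no appeal to the $T'$--$T''$ homothety is needed is also accurate; the paper's passing mention of the parallel sides of $T'$ and $T''$ in this proof is in fact superfluous.
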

\begin{proof}
From Proposition~\ref{prop:perspTTpp}, the perpendiculars from $P_1''$ to $P_2 P_3$, from $P_2''$ to $P_1 P_3$, and from $P_3''$ to $P_1 P_2$ all pass through $X_4$. Thus, triangles $T$ and $T''$ are orthologic and $X_4$ is one of their two orthology centers.

As before, we denote by $X_{671}$ the reflection of $M=X_{99}$ on $O=X_2$. Again, from Lemma~\ref{lem:perp-pi}, the line through $M$ and $-P_1$ is perpendicular to $P_2'' P_3''$, so reflecting it at $X_2$, we get that $P_1 X_{671}\perp P_2'' P_3''$. Since the triangles $T''$ and $T'$ have parallel sides, we get ${P_1}X_{671} \perp P_1'P_3' \parallel P_1''P_3''$. Thus, $X_{671}$ is the second orthology center of $T$ and $T''$. 
\end{proof}

\begin{theorem}[Sondat's Theorem]
If two triangles are both perspective and orthologic, their centers of orthology and perspectivity are collinear. Moreover, the line through these centers is perpendicular to the perspectrix of the two triangles \cite{thebault-1952,patrascu2020}.
\end{theorem}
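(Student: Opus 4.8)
The plan is to reduce both claims to a single scalar identity that encodes the orthology hypothesis, and then to exhibit that same identity as the reason each claim holds. First I would place the perspector $S$ at the origin; since the two triangles are perspective from $S$, the vertices of the second are scalar multiples $A'=\alpha A$, $B'=\beta B$, $C'=\gamma C$ of those of the first. Writing $J$ for the quarter-turn, $[X,Y]=\det(X,Y)$ for the area form (so that $X\cdot Y=[X,JY]$ and $J$ preserves $[\cdot,\cdot]$), and abbreviating $p=B\cdot C$, $q=C\cdot A$, $r=A\cdot B$ together with $u=[B,C]$, $v=[C,A]$, $w=[A,B]$, the collinearity of $S$ with the orthology centres $P,Q$ becomes the parallelism of $P$ and $Q$ as position vectors, while ``perpendicular to the perspectrix'' becomes ``the common direction of $P,Q$ is orthogonal to the Desargues axis.''

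Next I would realise each orthology centre as the intersection of three perpendiculars, i.e.\ as the solution of a pair of linear equations such as $(P-A)\cdot(C'-B')=0$. Solving by Cramer's rule and using $X\cdot Y=[X,JY]$ writes the centres as $P=JV_P/D_P$ and $Q=JV_Q/D_Q$ with explicit direction vectors $V_P,V_Q$. Summing the three equations defining either centre collapses to the single consistency condition
\[ p(\beta-\gamma)+q(\gamma-\alpha)+r(\alpha-\beta)=0,\qquad(\ast) \]
which is symmetric under interchange of the two triangles; this both reproves that orthology is symmetric and isolates precisely the hypothesis to be used.

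For the collinearity I would expand $V_P$ and $V_Q$ in the basis $e_1=A-C$, $e_2=B-C$ and observe that $(\ast)$ is exactly the vanishing of the affine coordinate-sum of $V_P$, so that $V_P$ and $V_Q$ are genuine directions. Their parallelism is the vanishing of the $2\times2$ determinant formed by their $(e_1,e_2)$-coefficients, and I expect this determinant to factor as $\alpha\beta r\cdot(\ast)$; hence $V_P\parallel V_Q$, so $P\parallel Q$, and with $S$ at the origin the points $S,P,Q$ are collinear.

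For the perpendicularity I would take the Desargues axis through $X=BC\cap B'C'$ and $Y=CA\cap C'A'$, so its direction is $X-Y$. The identities $[e_2,X]=u$ and $[e_1,Y]=-v$ are immediate from ``$X\in BC$'' and ``$Y\in CA$,'' and the remaining cross-brackets simplify through the total area $u+v+w$; assembling them I expect
\[ [V_Q,\,X-Y]=\frac{\alpha\beta(\gamma-1)(u+v+w)}{(\gamma-\beta)(\alpha-\gamma)}\,\big(p(\beta-\gamma)+q(\gamma-\alpha)+r(\alpha-\beta)\big), \]
which vanishes by $(\ast)$; thus $V_Q$ is parallel to the perspectrix and the line $SPQ$, having direction $JV_Q$, is perpendicular to it. I expect the main obstacle to be organisational rather than arithmetical: one must choose the normalisation and the two bases so that the metric data $p,q,r$ entering the centres and the affine data $u,v,w$ entering the axis recombine into the single factor $(\ast)$, and one must separately dispose of the configurations this normalisation excludes --- the perspector at infinity, and the degenerate cases where some of $\alpha,\beta,\gamma$ or their differences vanish --- by a limiting argument or a direct check. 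A purely synthetic proof via directed angles is also possible, but its bookkeeping is more delicate, so I would favour the vector computation above.
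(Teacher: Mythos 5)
The paper does not actually prove this statement: Sondat's theorem is quoted as a classical result, attributed to the two cited references, and is then only \emph{applied} to the pairs $T,T'$ and $T,T''$. So there is no in-paper argument to compare yours against; your proposal must stand on its own, and it does. I checked the three computations you only claim to ``expect.'' (i) With the perspector $S$ at the origin and $A'=\alpha A$, $B'=\beta B$, $C'=\gamma C$, summing the three perpendicularity equations for either orthology centre does collapse to $p(\beta-\gamma)+q(\gamma-\alpha)+r(\alpha-\beta)=0$: the sum for the first centre is $-(\ast)$ and for the second is $+(\ast)$, so $(\ast)$ is precisely the orthology criterion and is symmetric between the two triangles, as you say. (ii) Cramer's rule gives $V_P=-\alpha(\gamma q-\beta r)A-\beta(\alpha r-\gamma p)B+\gamma\left[(\alpha r-\gamma p)+(\gamma q-\beta r)\right]C$, whose coefficient sum is $\gamma\cdot(\ast)$; hence under $(\ast)$ the expansion $V_P=x_Pe_1+y_Pe_2$ with $x_P=-\alpha(\gamma q-\beta r)$, $y_P=-\beta(\alpha r-\gamma p)$ is legitimate, and against $V_Q$ with coordinates $x_Q=-\alpha(q-r)$, $y_Q=-\beta(r-p)$ one gets $x_Py_Q-y_Px_Q=\alpha\beta\left[(\gamma q-\beta r)(r-p)-(\alpha r-\gamma p)(q-r)\right]=\alpha\beta r\cdot(\ast)$, exactly your predicted factorization. (iii) With $X=\left(\beta(\gamma-1)B+\gamma(1-\beta)C\right)/(\gamma-\beta)$ and $Y=\left(\alpha(1-\gamma)A+\gamma(\alpha-1)C\right)/(\alpha-\gamma)$, one finds $[e_1,X]+v=\beta(\gamma-1)(u+v+w)/(\gamma-\beta)$ and $u-[e_2,Y]=\alpha(1-\gamma)(u+v+w)/(\alpha-\gamma)$, and assembling these reproduces your displayed formula for $[V_Q,X-Y]$ on the nose. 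Collinearity ($P\parallel Q$ as position vectors from $S$) and perpendicularity (line direction $JV_Q$ versus perspectrix direction $X-Y\parallel V_Q$) then both follow from $(\ast)$.

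The only genuine gap is the one you flag yourself: the degenerate configurations are deferred rather than handled, and a complete proof must dispose of them explicitly. Note in particular that if $\gamma=1$ your formula for $[V_Q,X-Y]$ vanishes for the trivial reason that $X=Y=C$, so the perspectrix direction must then be recovered from $Z=AB\cap A'B'$ instead; if $\alpha=\beta=\gamma$ the triangles are homothetic, the perspectrix is the line at infinity, and the perpendicularity clause must be excluded or read projectively; and the perspector at infinity and vanishing denominators $[n_1,n_2]$ (orthology centres at infinity) need the limiting argument you mention, which works because all the conditions involved are closed. Writing those two or three sentences would make this a complete, self-contained proof of a statement the paper leaves entirely to the literature.
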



\noindent Referring to Figure~\ref{fig:euler-perp}:

\begin{theorem}
The perspectrix of $T$ and $T'$ is perpendicular to the Euler Line of $T$.
\end{theorem}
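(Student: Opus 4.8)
The plan is to invoke Sondat's Theorem, stated immediately above, together with the perspectivity and orthology relations already established for $T$ and $T'$. From Corollary~\ref{perspTTp} the triangles $T$ and $T'$ are perspective with perspector $C_2=X_2'$, and from Theorem~\ref{th:orthologic} they are orthologic with the two centers of orthology being the reflection $X_{671}$ of $M$ on $X_2$ and the reflection of $M$ on $X_4$. Since $T$ and $T'$ are simultaneously perspective and orthologic, Sondat's Theorem guarantees that these three distinguished points---the perspector and the two orthology centers---are collinear on a single line $\ell$, and that $\ell$ is perpendicular to the perspectrix of $T$ and $T'$. Thus the theorem reduces to identifying the direction of $\ell$.

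First I would pin down the direction of $\ell$ using only the two orthology centers. Writing the reflection of a point $Z$ on a center $W$ as $2W-Z$, the two orthology centers are $2X_2-M$ and $2X_4-M$. Their difference is $2(X_4-X_2)$, which is parallel to the segment $X_2X_4$. Since the barycenter $X_2$ and the orthocenter $X_4$ of $T$ both lie on the Euler line of $T$, the line $\ell$ through the two orthology centers is parallel to the Euler line of $T$.

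Combining the two observations, the perspectrix of $T$ and $T'$ is perpendicular to $\ell$, and $\ell$ is parallel to the Euler line of $T$; hence the perspectrix is perpendicular to the Euler line, as claimed.

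The argument is essentially a bookkeeping of reflections, so I expect no serious obstacle. The only points needing care are (i) verifying that the configuration is non-degenerate so that the perspector and two orthology centers genuinely determine a line---this fails only when $M$ sits at an ellipse vertex, the same exceptional positions already flagged by the axial-symmetry remark---and (ii) checking from the explicit coordinates that the perspector $C_2$ indeed lies on this same line $\ell$, consistent with Sondat's collinearity assertion. For the perpendicularity conclusion itself, however, we need only the direction of $\ell$, which is already fixed by the two orthology centers alone, so the heart of the proof is the single vector identity $ (2X_4-M)-(2X_2-M)=2(X_4-X_2)$.
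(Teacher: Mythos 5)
Your proposal is correct and follows essentially the same route as the paper: invoke Sondat's Theorem with the perspectivity (Corollary~\ref{perspTTp}) and orthology (Theorem~\ref{th:orthologic}) of $T$ and $T'$, then observe that the line through the distinguished points is parallel to the Euler line because those points are reflections of $M$ in Euler-line centers. The only cosmetic difference is that the paper scales all three points (including the perspector $C_2$, the reflection of $M$ at $X_3$) back to $X_2,X_3,X_4$ by the homothety of center $M$ and ratio $1/2$, whereas you fix the direction directly from the difference $(2X_4-M)-(2X_2-M)=2(X_4-X_2)$ of the two orthology centers, which is the same computation in vector form.
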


\begin{proof}
Since $T$ and $T'$ are both orthologic and perspective from Corollary~\ref{perspTTp} and Theorem~\ref{th:orthologic}, by Sondat's Theorem, their perspectrix is perpendicular to the line through their orthology centers (reflections of $M$ at $X_2$ and $X_4$) and perspector ($X_2'=C_2=X_{98}=$reflection of $M$ at $X_3$). By applying a homothety of center $M$ and ratio 1/2, this last line is parallel to the line through $X_2$, $X_3$, and $X_4$, the Euler line of $T$. Therefore the perspectrix of $T$ and $T'$ is perpendicular to the Euler line of $T$.
%
\end{proof}

\begin{proposition}
The perspectrix of $T$ and $T''$ is perpendicular to the line $X_4 X_{671}$ (which is parallel to the line through $M$ and $X_{376}$, the reflection of $X_2$ at $X_3$).
\end{proposition}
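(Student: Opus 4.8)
The plan is to deduce both assertions from Sondat's Theorem together with facts already in hand, exactly parallel to the preceding result on $T$ and $T'$. First I would assemble the two hypotheses of Sondat's Theorem for the pair $(T,T'')$: by Proposition~\ref{prop:perspTTpp} these triangles are perspective with perspector $X_4$, and by Theorem~\ref{th:orthologic2} they are orthologic with orthology centers $X_4$ and $X_{671}$. Thus $T$ and $T''$ are simultaneously perspective and orthologic, so Sondat's Theorem applies.

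Next I would invoke Sondat's Theorem, which asserts that the two orthology centers and the perspector are collinear and that the line joining them is perpendicular to the perspectrix. Here the perspector $X_4$ happens to coincide with one of the two orthology centers, so the three centers collapse to the two distinct points $X_4$ and $X_{671}$; these determine the line $X_4 X_{671}$, and Sondat's Theorem yields at once that the perspectrix of $T$ and $T''$ is perpendicular to $X_4 X_{671}$. This establishes the main claim.

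For the parenthetical, I would verify the parallelism $X_4 X_{671} \parallel M X_{376}$ by a single affine identity. By definition $X_{671} = 2 X_2 - M$ (the reflection of $M=X_{99}$ on $X_2$) and $X_{376} = 2 X_3 - X_2$ (the reflection of $X_2$ on $X_3$), while the Euler line of $T$ gives $X_4 = 3 X_2 - 2 X_3$. Substituting,
\[
X_4 - X_{671} = (3X_2 - 2X_3) - (2X_2 - M) = M + X_2 - 2X_3 = M - X_{376},
\]
so $\vec{X_{671}X_4} = \vec{X_{376}M}$; the two segments are equal and parallel, whence the lines $X_4 X_{671}$ and $M X_{376}$ are parallel and the perpendicularity from the first part transfers verbatim to $M X_{376}$.

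I expect no serious obstacle here: the perpendicularity is an immediate consequence of Sondat's Theorem applied to two previously established facts, and the parallelism is a one-line vector computation along the Euler line of $T$. The only point requiring a moment of care is noting that the perspector $X_4$ and one orthology center coincide — this makes the collinearity conclusion of Sondat's Theorem trivially true, but one must still record that it correctly pins the relevant line down as $X_4 X_{671}$ rather than leaving it undetermined.
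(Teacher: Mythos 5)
Your proposal is correct and follows essentially the same route as the paper: both apply Sondat's Theorem to Proposition~\ref{prop:perspTTpp} and Theorem~\ref{th:orthologic2} to get the perpendicularity to $X_4X_{671}$, and both derive the parallelism with $MX_{376}$ from the Euler-line relation $X_4=3X_2-2X_3$ (the paper phrases this as a reflection of the line $X_4X_{671}$ at $X_2$, which is exactly your vector identity $X_4-X_{671}=M-X_{376}$ in disguise). Your added remark that the perspector coincides with one orthology center is a fair point of care, but it does not alter the argument.
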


\begin{proof}
Since $T$ and $T''$ are both orthologic and perspective from Proposition~\ref{prop:perspTTpp} and Theorem~\ref{th:orthologic2}, by Sondat's Theorem, their perspectrix is perpendicular to the line through their orthology centers $X_4$ and $X_{671}$. Reflecting this last line at $X_2$, we find that it is parallel to the line through $M$ and the reflection of $X_4$ at $X_2$, which is the same as the reflection of $X_2$ at $X_3$. 
\end{proof}

\begin{figure}
\includegraphics[height=.3\textheight,trim=60 0 40 30 ,clip]{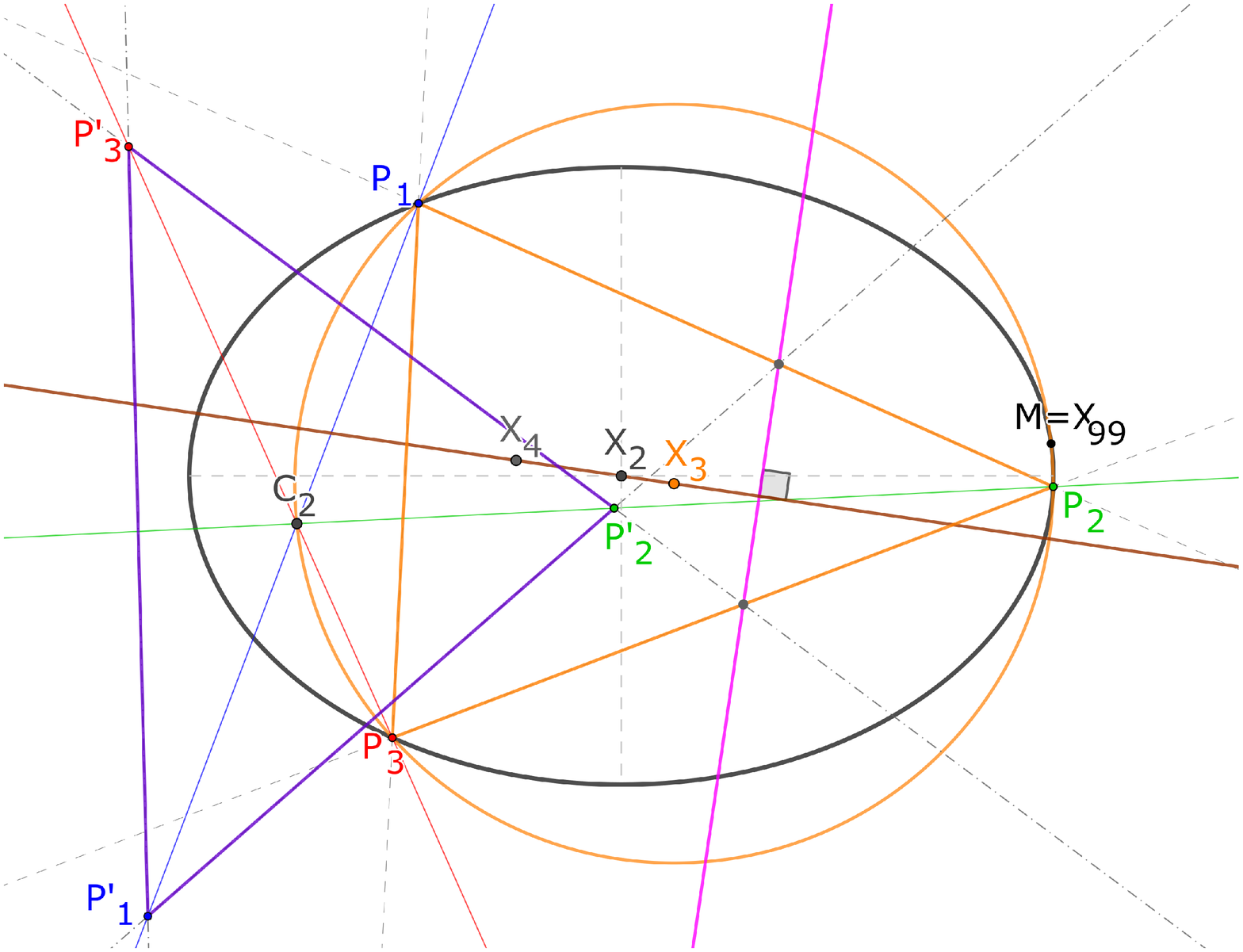}
\caption{The perspectrix of $T,T''$ is perpendicular to the line through $X_4$ and $X_{671}$. Compare with Figure~\ref{fig:non-transitive}: the perspectrix of $T,T'$ is perpendicular to the Euler Line of $T$.}
\label{fig:euler-perp}
\end{figure}

Table~\ref{tab:tri_ctrs} lists a few pairs of triangle centers numerically found to be common over $T,T'$ or $T,T''$.

\begin{table}[H]
\begin{tabular}{|c|l|l|}
\hline
$T$ & $T'$ & $T''$ \\
\hline
$X_3$ & -- & $X_2''$ \\
$X_4$ & -- & $X_{671}''$ \\
$X_5$ & -- & $X_{115}''$ \\
$X_{20}$ & -- & $X_{99}''$ \\
$X_{76}$ & -- & $X_{598}''$ \\
$X_{98}$ & $X_2'$ & -- \\
$X_{114}$ & $X_{230}'$ & -- \\
$X_{382}$ & -- & $X_{148}''$ \\
$X_{548}$ & -- & $X_{620}''$ \\
$X_{550}$ & -- & $X_{2482}''$ \\
\hline
\end{tabular}
\caption{Triangle Centers which coincide $T,T'$ or $T,T''$.}
\label{tab:tri_ctrs}
\end{table}

\section{Addendum: Rotated Negative Pedal Curve}
\label{sec:npc-rot}
The Negative Pedal Curve is the envelope of lines $L(t)$ passing through $P(t)$ and perpendicular to $P(t)-M$. Here we consider the envelope $\Delta^*_{\theta}$ of the $L(t)$ rotated clockwise a fixed $\theta$ about $P(t)$; see Figure~\ref{fig:npc-rot}.

\begin{figure}
    \centering
    \includegraphics[width=\textwidth]{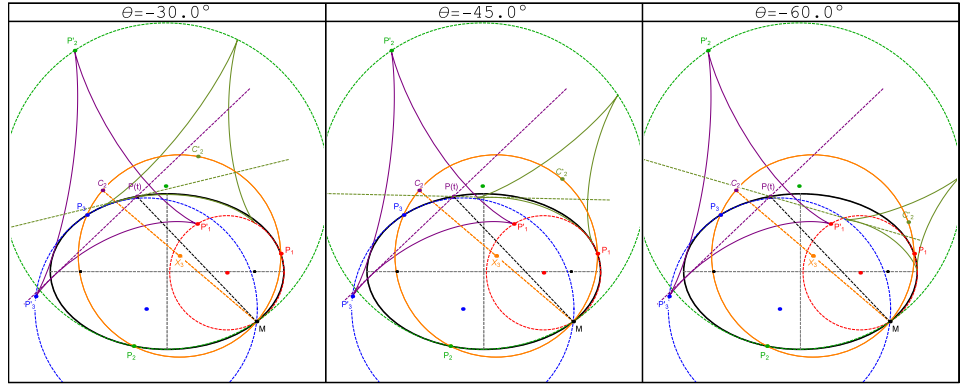}
    \caption{From left to right: for a fixed $M$, the line passing through $P(t)$ and perpendicular to the segment $P(t)-M$ (dashed) purple is rotated clockwise by $\theta=30,45,60$ degrees, respectively (dashed olive green). For all $P(t)$ these envelop new constant-area crooked hats $\Delta^*$ (olive green) whose areas are $\cos(\theta)^2$ that of $\Delta$. For the $\theta$ shown, these amount to $3/4,1/2,1/4$ of the area of $\Delta$ (purple). As one varies $\theta$, the center of area $C_{2}^*$ of $\Delta^*$ sweeps the circular arc between $C_2$ and $M$ with center at angle $2\theta$. The same holds for the cusps running along the corresponding osculating circles (shown dashed red, green, blue), which are stationary and independent of $\theta$.
    \textbf{Video:} \cite[PL\#06]{playlist2020-deltoid}}
    \label{fig:npc-rot}
\end{figure}

\begin{proposition}
$\Delta_{\theta}^\ast$ is the image of the NPC $\Delta$ under the similarity which is the product of a rotation about $M$ through $\theta$ and a homothety with center $M$ and factor $\cos\theta$.
\label{prop:bottema}
\end{proposition}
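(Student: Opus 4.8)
The plan is to prove the pointwise statement $\Delta^*_\theta(t)=S(\Delta_u(t))$, where $S(X)=M+\cos\theta\,R_{-\theta}(X-M)$ is the asserted similarity (the homothety of center $M$ and ratio $\cos\theta$ composed with rotation about $M$ by the clockwise angle $\theta$, here written $R_{-\theta}$); the claim for the whole curve then follows since $t$ is a common parameter. Writing $\mathbf{q}(t)=P(t)-M$, the defining line of the NPC is $L(t,\cdot)=\langle\,\cdot\,-P(t),\mathbf{q}(t)\rangle=0$, whose normal direction is $\mathbf{q}(t)$. Rotating this line clockwise by $\theta$ about $P(t)$ rotates its normal by the same angle, so the rotated family is $L_\theta(t,\cdot)=\langle\,\cdot\,-P(t),R_{-\theta}\mathbf{q}(t)\rangle=0$. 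I would set up both envelopes in this vector form rather than in the coordinates of Lemma~\ref{lem:envelope}, so that the dependence on $\theta$ is transparent.

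First I would solve the characteristic-point equations $L_\theta=L_\theta'=0$. Write the tangency point as $P(t)+Z$. The equation $L_\theta=0$ forces $Z\perp R_{-\theta}\mathbf{q}$, hence $Z=\lambda\,R_{-\theta}\mathbf{q}^\perp$ for a scalar $\lambda$, where $\mathbf{q}^\perp=R_{\pi/2}\mathbf{q}$. Differentiating and using $\mathbf{q}'=P'$ together with the invariance of $\langle\cdot,\cdot\rangle$ under $R_{-\theta}$, the equation $L_\theta'=0$ reads $\lambda\langle\mathbf{q}^\perp,P'\rangle=\langle P',R_{-\theta}\mathbf{q}\rangle$. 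Expanding $R_{-\theta}\mathbf{q}=\cos\theta\,\mathbf{q}-\sin\theta\,\mathbf{q}^\perp$ and dividing by $\langle\mathbf{q}^\perp,P'\rangle$ (nonzero for generic $t$, which is exactly the regularity of $\L(P_0)$ noted in Section~\ref{sec:preli}) yields the key reduction $\lambda=\lambda_0\cos\theta-\sin\theta$, where $\lambda_0=\langle P',\mathbf{q}\rangle/\langle\mathbf{q}^\perp,P'\rangle$ is the scalar for the unrotated case, i.e.\ $\Delta_u(t)=P+\lambda_0\mathbf{q}^\perp$.

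To finish, I would substitute this $\lambda$ back and factor. Using $(\mathbf{q}^\perp)^\perp=-\mathbf{q}$ one has $R_{-\theta}\mathbf{q}^\perp=\cos\theta\,\mathbf{q}^\perp+\sin\theta\,\mathbf{q}$, so that $\Delta^*_\theta(t)-M=\mathbf{q}+\lambda\,R_{-\theta}\mathbf{q}^\perp$ expands into a combination of $\mathbf{q}$ and $\mathbf{q}^\perp$ with coefficients $\cos\theta(\cos\theta+\lambda_0\sin\theta)$ and $\cos\theta(\lambda_0\cos\theta-\sin\theta)$. These are exactly $\cos\theta$ times the components of $R_{-\theta}(\mathbf{q}+\lambda_0\mathbf{q}^\perp)$, whence $\Delta^*_\theta(t)-M=\cos\theta\,R_{-\theta}\big(\Delta_u(t)-M\big)$, which is precisely $S$.

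The computation is short, but the delicate point—and where I expect to spend the most care—is bookkeeping the orientation conventions: which way ``clockwise by $\theta$'' rotates the normal, the identity $(\mathbf{q}^\perp)^\perp=-\mathbf{q}$, and keeping the two occurrences of $R_{-\theta}$ (one acting on the normal, one hidden in $S$) consistent. As sanity checks I would verify the two stated consequences: the area of $\Delta^*_\theta$ is $\cos^2\theta$ times that of $\Delta$ (the square of the homothety ratio, rotation being area-preserving), and, since each cusp satisfies $\angle M P_i P_i'=90^\circ$ so that $M P_i'$ is a diameter of the osculating circle $\K_i$ (Proposition~\ref{prop:persM}), the image $S(P_i')$ runs along $\K_i$ as $\theta$ varies, matching Figure~\ref{fig:npc-rot}.
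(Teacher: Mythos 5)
Your proof is correct, but it takes a genuinely different route from the paper. You solve the envelope equations $L_\theta=L_\theta'=0$ directly in vector form, reduce the rotated problem to the unrotated one via the scalar identity $\lambda=\lambda_0\cos\theta-\sin\theta$, and then factor the result as $\Delta^*_\theta(t)-M=\cos\theta\,R_{-\theta}\bigl(\Delta_u(t)-M\bigr)$; the orientation bookkeeping you flag as delicate is in fact consistent throughout (the same clockwise $R_{-\theta}$ acts on the normals and appears in the similarity), and your division by $\langle\mathbf{q}^\perp,P'\rangle$ only excludes $t=u$, where the line family degenerates anyway. The paper instead argues kinematically, following Bottema--Roth: as $t$ varies, the moving line system has an instantaneous center of rotation $I$ lying on the normal to $\E$ at $P$ and on the normal to $MP$ at $M$; the points $P,M,I$ and the enveloping point $C$ of $L(t)$ form a rectangle, the enveloping point $C^*$ of the rotated line is the pedal point (foot of the perpendicular) of $I$ on that line, and since $C^*$ lies on the circle with diameter $MC$, Thales' theorem gives exactly the rotation-by-$\theta$-plus-scaling-by-$\cos\theta$ correspondence. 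Your computation buys elementary self-containedness: it needs no kinematics, makes the regularity hypothesis explicit, and proves the pointwise parameter-by-parameter correspondence. The paper's argument buys geometric insight that your calculation leaves invisible: identifying $C^*$ as the pedal of the instant center explains \emph{why} the similarity arises, and the notion of the return (cuspidal) circle explains the observation in Figure~\ref{fig:npc-rot} that as $\theta$ varies the cusps of $\Delta^*_\theta$ slide along the fixed osculating circles $\K_i$ — a fact you recover only as an after-the-fact sanity check via Proposition~\ref{prop:persM}.
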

\begin{proof} 
For variable parameter $t$, the lines $L(t)$ and $L_{\theta}^\ast(t)$ perform a motion which sends $P$ along $\E$, while the line through $P$ orthogonal to $L(t)$ slides through the fixed point $M$. Due to basic results of planar kinematics \cite[p.~274]{Bottema1979}, the instantaneous center of rotation $I$ lies on the normal to $\E$ at $P$ and on the normal to $MP$ at $M$. We obtain a rectangle with vertices $P$, $M$ and $I$. The fourth vertex is the enveloping point $C$ of $L(t)$. The enveloping point $C^\ast$ of $L_{\theta}^\ast$ is the pedal point of $I$. Since the circumcircle of the rectangle with diameter $MC$ also passes through $C^\ast$, we see that $C^\ast$ is the image of $C$ under the stated similarity, Figure~\ref{fig:delta-theta-homothety}.

This holds for all points on $\Delta$, including the cusps, but also for the center $C_2$. At poses where $C$ reaches a cusp $P_i'$ of $\Delta$, then for all lines $L_{\theta}^\ast(t)$ through $P$ the point $C^\ast$ is a cusp of the corresponding envelope. Then the point is the so-called return pole, and the circular path of $C$ the return circle or cuspidal circle \cite[p.~274]{Bottema1979}. 
\end{proof}

\begin{figure}
    \centering
    \includegraphics[width=.7\textwidth]{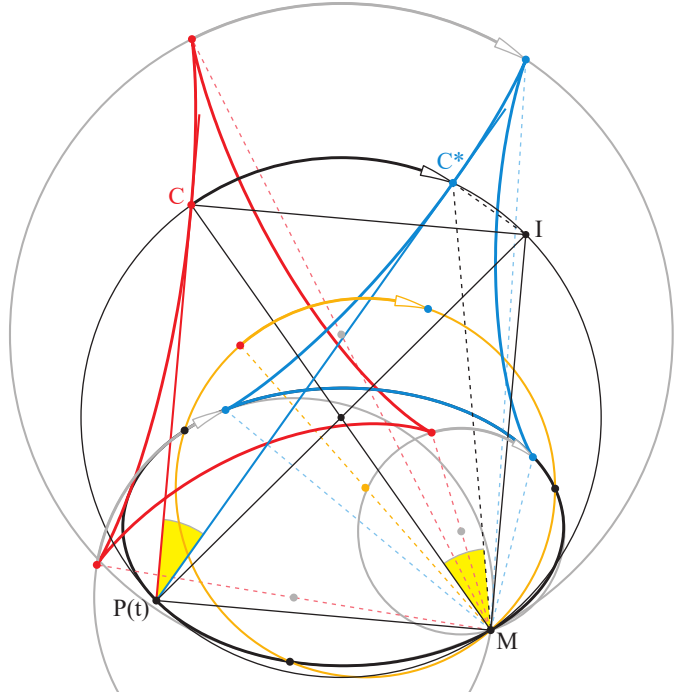}
    \caption{The construction of points $C$, $  C^* $of the envelopes $ \Delta$ (red) and $ \Delta^*$ (blue) with the help of the instant center of rotation $I$ reveals that the rotation about $M $ through $  \theta$ and scaling with factor $\cos\theta $ sends $\Delta$ to $\Delta^* $  (Proposition~
    \ref{prop:bottema}).}
    \label{fig:delta-theta-homothety}
\end{figure}

\begin{corollary} The area of $\Delta_{\theta}^*$ is independent of $M$ and is given by:

	\[A=\frac{c^4\cos^2\theta\; \pi}{2ab}. \]
\end{corollary}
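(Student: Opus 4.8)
The plan is to invoke Proposition~\ref{prop:bottema}, which already identifies $\Delta_\theta^*$ as the image of $\Delta$ under an explicit similarity transformation. Since signed area behaves predictably under similarities, and the area of $\Delta$ is already recorded in Equation~\ref{eqn:adelta}, the result should follow immediately by computing the square of the similarity ratio.

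First I would recall from Proposition~\ref{prop:bottema} that $\Delta_\theta^*$ is obtained from $\Delta$ by composing a rotation about $M$ through the fixed angle $\theta$ with a homothety centered at $M$ of factor $\cos\theta$. A planar rotation is an isometry and hence preserves area, contributing a factor of $1$; a homothety with ratio $k$ scales every planar region by $k^2$, and here $k=\cos\theta$, so the homothety multiplies area by $\cos^2\theta$. Composing the two maps, the similarity scales area by exactly $\cos^2\theta$.

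Next I would substitute the known area of $\Delta$ from Equation~\ref{eqn:adelta}, namely $A(\Delta)=c^4\pi/(2ab)$, to obtain
\[A(\Delta_\theta^*)=\cos^2\theta\cdot A(\Delta)=\frac{c^4\cos^2\theta\,\pi}{2ab}.\]
Independence of $M$ then follows at once: the area $A(\Delta)$ is invariant over $M$ by the corollary accompanying Equation~\ref{eqn:adelta}, while the scaling factor $\cos^2\theta$ depends only on the fixed rotation angle and not on the position of $M$.

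The only conceptual point, which is really the content already handled in Proposition~\ref{prop:bottema} rather than here, is confirming that the correspondence between $\Delta$ and $\Delta_\theta^*$ is a genuine global similarity applied pointwise (including at the cusps, where $C$ and $C^\ast$ correspond), so that the area multiplier applies uniformly across the whole figure. Given that proposition, this step requires no independent argument, and there is no remaining obstacle; the computation of the area reduces to a single multiplication.
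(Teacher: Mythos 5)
Your proof is correct and matches the paper's intended argument: the corollary follows immediately from Proposition~\ref{prop:bottema}, since the rotation preserves area, the homothety of factor $\cos\theta$ scales area by $\cos^2\theta$, and $A(\Delta)=c^4\pi/(2ab)$ is already known to be independent of $M$ from Equation~\ref{eqn:adelta}. The paper leaves this derivation implicit (stating only that the area equals $\cos^2\theta$ times that of $\Delta$), and your write-up simply makes the same reasoning explicit.
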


Note this is equal to $\cos^2\theta$ of the area of $\Delta$, see Equation~\ref{eqn:adelta}. 

\begin{remark}
For variable $\theta$ between $-90^\circ$ and $90^\circ$, the said similarity defines an equiform motion where each point in the plane runs along a circle through $M$ with the same angular velocity.
For each point, the configuration at $\theta = 0$ and $M$ define a diameter of the trajectory. 
\end{remark}

Recall the pre-images $P_i$ of the cusps of $\Delta$ have vertices at $\E(t_i)$, where $t_1=-\frac{u}{3}$, $t_2=-\frac{u}{3}-\frac{2\pi}{3}$, $t_3=-\frac{u}{3}-\frac{4\pi}{3}$, see Theorem~\ref{th:circle_5}.

\begin{corollary}
The cusps $P_i^*$ of $\Delta_{\theta}^*$ have pre-images on $\E$ which are invariant over $\theta$ and are congruent with the $P_i,i=1,2,3$. 
\end{corollary}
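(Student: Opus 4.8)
The plan is to deduce this directly from the parameter-preserving similarity established in Proposition~\ref{prop:bottema}, with essentially no new computation. The starting observation is that both $\Delta$ and $\Delta_\theta^*$ are envelopes of lines through the \emph{same} moving point $P(t)=(a\cos t,b\sin t)$ on $\E$: the tangent $L(t)$ to $\Delta$ at its enveloping point $C(t)$ passes through $P(t)$, and likewise the tangent $L_\theta^*(t)$ to $\Delta_\theta^*$ at $C^*(t)$ passes through $P(t)$. Hence, for either curve, the \emph{pre-image} of the point carried by parameter $t$ is by definition $P(t)$, and it suffices to identify the parameter values at which $\Delta_\theta^*$ is singular.

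First I would record the point-by-point statement contained in Proposition~\ref{prop:bottema}: for each fixed $\theta$ the similarity $\sigma$ (rotation about $M$ through $\theta$ followed by the homothety with center $M$ and factor $\cos\theta$) sends the enveloping point $C(t)$ of $L(t)$ to the enveloping point $C^*(t)$ of $L_\theta^*(t)$ for every $t$, i.e. $C^*(t)=\sigma(C(t))$ as parametrized curves. Writing $\sigma(z)=M+\cos\theta\,R_\theta(z-M)$ with $R_\theta$ the rotation by $\theta$, its linear part is $D\sigma=\cos\theta\,R_\theta$, invertible precisely when $\cos\theta\neq 0$, i.e. for $\theta\in(-90^\circ,90^\circ)$.

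Next I would differentiate. Writing $\Delta_\theta^*(t)=\sigma(\Delta_u(t))$ and applying the chain rule gives $\frac{d}{dt}\Delta_\theta^*(t)=D\sigma\cdot\Delta_u'(t)$, so that the velocity of $\Delta_\theta^*$ vanishes exactly where the velocity of $\Delta_u$ does. By the proof of Theorem~\ref{th:circle_5}, $\Delta_u$ is singular precisely at $t_1=-\frac{u}{3}$, $t_2=-\frac{u}{3}-\frac{2\pi}{3}$, $t_3=-\frac{u}{3}-\frac{4\pi}{3}$; hence the cusps $P_i^*=\Delta_\theta^*(t_i)$ of the rotated hat occur at these same parameter values, independently of $\theta$. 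This is exactly the ``return pole'' phenomenon noted at the end of the proof of Proposition~\ref{prop:bottema}.

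Finally I would conclude that the pre-image of the cusp $P_i^*$ is $P(t_i)=P_i$, the same point as the pre-image of the cusp $P_i'$ of $\Delta$; as the $t_i$ do not involve $\theta$, these pre-images are invariant over $\theta$ and coincide with (a fortiori are congruent to) the $P_i$. The step carrying the real content is the parameter-matching inside Proposition~\ref{prop:bottema}; once the similarity is known to relate the two envelopes pointwise in $t$, the corollary is immediate, the only thing to verify being the elementary fact that an invertible linear map neither creates nor destroys zeros of the velocity vector. I expect no genuine obstacle, apart from excluding the degenerate values $\theta=\pm 90^\circ$, where $\cos\theta=0$ and $\sigma$ collapses.
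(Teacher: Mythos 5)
Your proposal is correct and follows essentially the same route as the paper, which leaves this corollary as an immediate consequence of Proposition~\ref{prop:bottema}: the similarity relates the two envelopes at the same parameter $t$ (both tangent lines pass through the same $P(t)$), so cusps correspond to cusps and the pre-images $P(t_i)$ are untouched. Your chain-rule argument (invertibility of $D\sigma=\cos\theta\,R_\theta$ for $\theta\neq\pm90^\circ$) is just an analytic restatement of the paper's kinematic ``return pole'' observation at the end of that proposition's proof.
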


\begin{corollary}
Lines ${P_i}{P_i^*}$ concur at $C_2^*$.
\end{corollary}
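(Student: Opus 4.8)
The plan is to deduce this concurrence from the corresponding one already established for $\Delta$ by pushing it forward through the similarity of Proposition~\ref{prop:bottema}. Write $S_\theta$ for that map (the rotation about $M$ through $\theta$ composed with the homothety of center $M$ and factor $\cos\theta$), so that $\Delta_\theta^*=S_\theta(\Delta)$. As recorded in the proof of Proposition~\ref{prop:bottema}, $S_\theta$ carries the cusps and the area center of $\Delta$ to those of $\Delta_\theta^*$, whence $P_i^*=S_\theta(P_i')$ and $C_2^*=S_\theta(C_2)$.

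First I would note that the defining line $L(t_i)$ — the tangent to $\Delta$ at the cusp $P_i'$ — passes through the pre-image $P_i=P(t_i)$ and is perpendicular to $MP_i$. By the concurrence already proved for $\Delta$ (the lines joining each cusp $P_i'$ to its pre-image $P_i$ meet at the area center $C_2$), the line $P_iP_i'$ passes through $C_2$; since $P_i$ and $P_i'$ both lie on $L(t_i)$, that line is exactly $L(t_i)$, and therefore $C_2\in L(t_i)$ as well. Thus both $P_i'$ and $C_2$ lie on the single line $L(t_i)$.

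Applying $S_\theta$, the two images $P_i^*$ and $C_2^*$ lie on the image line $S_\theta(L(t_i))$. The key step is to identify this image line. Since $S_\theta$ maps $\Delta$ to $\Delta_\theta^*$ and preserves tangency, $S_\theta(L(t_i))$ is tangent to $\Delta_\theta^*$ at the cusp $P_i^*=S_\theta(P_i')$; but the envelope $\Delta_\theta^*$ is tangent at that cusp to its defining line $L_\theta^*(t_i)$, so the two lines coincide, $S_\theta(L(t_i))=L_\theta^*(t_i)$. By construction $L_\theta^*(t_i)$ arises from $L(t_i)$ by a rotation about $P(t_i)=P_i$, hence passes through $P_i$. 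Therefore $P_i$, $P_i^*$ and $C_2^*$ all lie on $L_\theta^*(t_i)$, i.e. $P_iP_i^*$ passes through $C_2^*$; running $i=1,2,3$ gives the asserted concurrence.

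I expect the only delicate point to be the identification $S_\theta(L(t_i))=L_\theta^*(t_i)$, which rests on the facts that a similarity sends the cusp tangent of $\Delta$ to the cusp tangent of its image and that the latter is the envelope's defining line at $t_i$. If one prefers to avoid this tangent-line argument, the same conclusion follows from a short coordinate check: placing $P_i$ at the origin with $M$ on the positive $x$-axis makes $L(t_i)$ the $y$-axis (on which both $P_i'$ and $C_2$ sit), and a direct evaluation of $S_\theta$ shows that the images $P_i^*$ and $C_2^*$ are both scalar multiples of a single direction vector, hence collinear with the origin $P_i$.
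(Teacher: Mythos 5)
Your proof is correct and follows the route the paper intends: the paper states this as an immediate corollary of Proposition~\ref{prop:bottema}, i.e.\ pushing the known concurrence $P_i,P_i',C_2\in L(t_i)$ forward through the similarity $S_\theta$, which is exactly your argument. Your identification $S_\theta(L(t_i))=L_\theta^*(t_i)$ (so that the image line still passes through the fixed pre-image $P_i$) is the one detail the paper leaves implicit, and both your cusp-tangent argument and your coordinate check settle it correctly.
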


\begin{corollary}
$C_2^*$ is a rotation of $C_2$ by $2\theta$ about the center $X_3$ of $\K$. In particular, When $\theta=\pi/2$, $C_2^*=M$, and $\Delta_{\theta}^*$ degenerates to point $M$.
\end{corollary}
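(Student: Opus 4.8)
The plan is to combine Proposition~\ref{prop:bottema} with the fact that the center of area is covariant under affine maps. Proposition~\ref{prop:bottema} presents $\Delta_\theta^*$ as the image of $\Delta$ under the similarity
\[ S(Z) = M + \cos\theta\, R_\theta\,(Z - M), \]
where $R_\theta$ denotes rotation through $\theta$ about the origin. Since $S$ is affine (its linear part $\cos\theta\,R_\theta$ has constant Jacobian $\cos^2\theta$), it sends the region enclosed by $\Delta$ onto the region enclosed by $\Delta_\theta^*$ and carries centroids to centroids; hence $C_2^* = S(C_2)$. This covariance is the one external input beyond Proposition~\ref{prop:bottema}, and it needs only the standard change-of-variables argument.

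First I would recenter at $X_3$. By Theorem~\ref{th:circle_5}, $MC_2$ is a diameter of the circle $\K$, so its center satisfies $X_3 = (M+C_2)/2$. Setting $v := C_2 - X_3$ gives $M - X_3 = -v$ and $C_2 - M = 2v$, so that
\[ C_2^* = S(C_2) = M + \cos\theta\, R_\theta(C_2 - M) = X_3 - v + 2\cos\theta\, R_\theta v. \]
The target—the rotation of $C_2$ by $2\theta$ about $X_3$—is $X_3 + R_{2\theta} v$. Thus the entire claim collapses to the single operator identity
\[ 2\cos\theta\, R_\theta - I = R_{2\theta}, \]
which is just the double-angle formulas $2\cos^2\theta - 1 = \cos 2\theta$ and $2\sin\theta\cos\theta = \sin 2\theta$ read off the matrix entries. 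Applying it to $v$ yields $C_2^* = X_3 + R_{2\theta} v$, as desired. The identity is insensitive to the orientation of the rotation, so the clockwise convention of Section~\ref{sec:npc-rot} causes no trouble.

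For the degenerate case $\theta = \pi/2$ the homothety factor $\cos\theta$ vanishes, so $S$ maps the whole plane—and in particular all of $\Delta$—to the single point $M$; therefore $\Delta_{\pi/2}^*$ collapses to $M$ and $C_2^* = M$. This is consistent with the rotation picture, since rotating $C_2$ by $\pi$ about the midpoint $X_3$ of $MC_2$ lands precisely on $M$. More broadly, as $\theta$ varies the formula $C_2^* = X_3 + R_{2\theta} v$ shows $C_2^*$ runs along the circle $\K$ of center $X_3$ and radius $|MC_2|/2$, sweeping from $C_2$ at $\theta=0$ to $M$ at $\theta=\pi/2$, matching Figure~\ref{fig:npc-rot}. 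The only real obstacle here is conceptual rather than computational: recognizing that the double-angle identity is exactly what converts the rotate-then-shrink similarity centered at $M$ into a pure rotation about the diameter's midpoint $X_3$.
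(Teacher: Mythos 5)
Your proof is correct and follows essentially the route the paper intends: the corollary is stated without proof as an immediate consequence of Proposition~\ref{prop:bottema} together with the fact from Theorem~\ref{th:circle_5} that $MC_2$ is a diameter of $\K$, so that $X_3=(M+C_2)/2$. Your explicit verification---centroid covariance under the similarity plus the operator identity $2\cos\theta\,R_\theta - I = R_{2\theta}$---just fills in the computation that the paper's preceding remark (every point traces a circle through $M$, with its $\theta=0$ position and $M$ as a diameter) leaves implicit.
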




\section{Conclusion}
\label{sec:conclusion}
Before we part, we would like to pay homage to eminent swiss mathematician Jakob Steiner (1796--1863), discoverer of several concepts appearing herein: the Steiner Ellipse and Inellipse, the Steiner Curve (or hypocycloid), the Steiner Point $X_{99}$. Also due to him is the concept of orthologic triangles and the theorem of 3 concurrent osculating circles in the ellipse. Hats off and vielen dank, Herr Steiner!


Some of the above phenomena are illustrated dynamically through the videos on Table~\ref{tab:videos}.

\begin{table}[H]
\begin{tabular}{|c|l|l|l|}
\hline
\href{https://bit.ly/3gqfIRm}{PL\#} & Title & Narrated \\
\hline

\href{https://youtu.be/wetmchfY5jI}{01} & 
\makecell[lt]{Constant-Area Deltoid} & no \\

\href{https://youtu.be/LxADeM1-WHw}{02} & 
\makecell[lt]{Properties of the Deltoid} & yes \\

\href{https://youtu.be/NwXc-Vfjs98}{03} & 
\makecell[lt]{Osculating Circles at the Cusp Pre-Images} & yes \\
\href{https://youtu.be/rZht21KFXk4}{04} & 
\makecell[lt]{Loci of Cusps and $C_2$} & no \\
\href{https://youtu.be/fwyr6LXFS1c}{05} & 
\makecell[lt]{Concyclic pre-images, osculating circles,\\and 3 area-invariant triangles} & no \\
\href{https://youtu.be/DgADxkqlKSw}{06} & Rotated Negative Pedal Curve & yes \\
\hline
\end{tabular}
\caption{Playlist of videos. Column ``PL\#'' indicates the entry within the playlist.}
\label{tab:videos}
\end{table}

\appendix
\section{Explicit Expressions for the \texorpdfstring{$P_i,P_i',P_i''$}{Pi,Pi',Pi''}}

 {
\begin{align*}
P_1=&\left[ a\cos\frac{u}{3}, -b\sin\frac{u}{3} ,\right]\\
P_2=&\left[  -a\sin(\frac{u}{3}+\frac{\pi}{6}), -b\cos(\frac{u}{3}+\frac{\pi}{6})\right]\\
P_3
=&\left[  -a\cos(\frac{u}{3}+\frac{\pi}{6}), b\sin(\frac{u}{3}+\frac{\pi}{6}) \right]\\
P_1'=&\left[  
 \frac {3c^2}{2a}\cos \frac{u}{3} - \, \frac {
 \left( {a}^{2}+{b}^{2} \right)}{2a} \cos   u    , \frac {3{c}^{2}}{2b}\sin \frac{u}{3} - \, \frac {
 \left( {a}^{2}+{b}^{2} \right)}{2b} \sin u   
 \right]\\
 P_2'=& \left[ -  \frac {3{c}^{2}}{4a}\cos \frac{u}{3}    - \,
\frac {3\sqrt {3}{c}^{2}}{4a} \sin \frac{u}{3}- \,\frac {
 \left( {a}^{2}+{b}^{2} \right)}{2a} \cos   u 
,   \frac {3\sqrt {3}{c}^{2}}{4b}\cos \frac{u}{3}- \,
\frac {3{c}^{2}}{4b}\sin \frac{u}{3}- \,   \frac{\left( {a}^{2}+{b}^{2} \right) }{2b}\sin u
\right]\\
P_3'=&\left[- \, \frac {3{c}^{2}}{4a}\cos \frac{u}{3} + \,
\frac {3\sqrt {3}{c}^{2}}{4a}\sin\frac{u}{3} - \,\frac {
 \left( {a}^{2}+{b}^{2} \right) }{2a}\cos u  , -\frac {3\sqrt {3}{c}^{2}}{4b}\cos \frac{u}{3}- \,
\frac {3{c}^{2}}{4b}\sin \frac{u}{3}- \,   \frac{\left( {a}^{2}+{b}^{2} \right) }{2b}\sin u
\right]\\
P_1''=&\left[\frac{3c^2}{4a} \cos\frac{u}{3} +\frac{c^2}{4a}\cos u,\frac{3c^2}{4b} \sin\frac{u}{3} +\frac{c^2}{4b}\sin u
\right]\\
P_2''=&\left[-\frac{3c^2}{8a}\cos\frac{u}{3}  -\frac{3\sqrt{3}c^2}{8a}\sin\frac{u}{3}+\frac{c^2}{4a} \cos u ,\frac{3\sqrt{3}c^2}{8b }\cos\frac{u}{3} -\frac{3c^2}{8b}\sin\frac{u}{3} -\frac{c^2}{4b}\sin u
\right]\\
P_3''=&\left[-\frac{3c^2}{8a}\cos\frac{u}{3}  +\frac{3\sqrt{3}c^2}{8a}\sin\frac{u}{3}+\frac{c^2}{4a} \cos u,-\frac{3\sqrt{3}c^2}{8b }\cos\frac{u}{3} -\frac{3c^2}{8b}\sin\frac{u}{3} -\frac{c^2}{4b}\sin u
\right]
\end{align*}
}

\label{app:cusps}

\section{Table of Symbols}
\begin{table}[b]
\begin{tabular}{|c|l|l|}
\hline
symbol & meaning & note \\
\hline
$\E$ & main ellipse & \\
$a,b$ & major, minor semi-axes of $\E$ & \\
$c$ & half the focal length of $\E$ & $c^2=a^2-b^2$ \\
$O$ & center $\E$ & \\
$M,M_u$ & a fixed point on the boundary of $\E$ & \makecell[lt]{$[a\cos{u},b\sin{u}]$,\\ perspector of $T',T''$, $=X_{99}$} \\
$P(t)$ & a point which sweeps the boundary of $\E$ & $[a\cos{t},b\sin{t}]$\\
$L(t)$ & Line through $P(t)$ perp. to $P(t)-M$ & \\
$\Delta,\Delta_u$ & \makecell[lt]{Steiner's Hat, negative pedal curve\\of $\E$ with respect to $M$} & invariant area \\
$\Delta^*_{\theta}$ & envelope of $L(t)$ rotated $\theta$ about $P(t)$ & invariant area \\
$\bar{C}$ & average coordinates of $\Delta$ & $=C_2=X_2'$ \\
$C_2,C_2^*$ & area center of $\Delta,\Delta^*$ & $C_2=X_2'=X_{98}$ \\
$P_i',P_i,P_i''$ & \makecell[lt]{The cusps of $\Delta$, their pre-images,\\and centers of $K_i$ (see below)} & \\
$P_i^*$ & cusps of $\Delta^*$ & $P_i{P_i^*}$ concur at $C_2^*$\\
$T,T',T''$ & triangles defined by the $P_i,P_i',P_i''$ & invariant area over $M$ \\
$A,A',A''$ & areas of $T,T',T''$ & $A'/A''=4$ for any $M,a,b$ \\
\hline
$\S$ & Steiner's Curve & aka. Hypocycloid and Triscupoid \\
$\E'$ & \makecell[lt]{Steiner Circumellipse\\of cusp ($P_i'$) triangle} &  centered at $C_2$\\
$a',b'$ & major, minor semi-axes of $\E'$ & \makecell[lt]{invariant, axis-parallel\\and similar to $90^\circ$-rotated $\E$} \\
$\K$ & Circumcircle of $T$ & center $X_3$, contains $M,P_i,C_2,C_2^*$\\
$\K'$ & Circumcircle of $T'$ & \\
$\K_i$ & Circles osculating $\E$ at the $P_i$ & contain $P_i,P_i',M$\\
$\E^*$ & evolute of $\E$ & the $K_i$ lie on it \\
$\H$ & Apollonius Hyperbola of $\E$ wrt $M$ & $\Delta$ is tangent to $\E$ at $\H{\cap}\E$\\
\hline
$X_3$ & circumcenter of $T$ & $=X_2''$ \\
$X_4$ & perspector of $T$ and $T''$ & \\
$X_{99}$ & Steiner Point of $T$ & $=M$ \\
$X_{98}$ & Tarry Point of $T$ & $=C_2$ \\
$X_2'$ & centroid of $T'$ & $=C_2$, and perspector of $T,T'$ \\
$X_{99}'$ & Steiner Point of $T'$ & \\
$X_2''$ & centroid of $T''$ & $=X_3$ \\
\hline
\end{tabular}
\caption{All Symbols used.}
\label{tab:symbols}
\end{table}

\label{app:symbols}



\clearpage
\bibliographystyle{maa}
\bibliography{references}

\begin{thebibliography}{10}
\expandafter\ifx\csname urlstyle\endcsname\relax
 \providecommand{\url}[1]{doi:\discretionary{}{}{}#1}\else
 \providecommand{\url}{doi:\discretionary{}{}{}\begingroup
  \urlstyle{rm}\Url}\fi

\bibitem{Bottema1979}
Bottema, O., Roth, B. (1979).
\newblock \emph{Theoretical Kinematics}.
\newblock North-Holland Publ.

\bibitem{ferreol2017-deltoid}
Ferréol, R. (2017).
\newblock Mathcurve (deltoid curve).
\newblock \url{mathcurve.com/courbes2d.gb/deltoid/deltoid.shtml}.

\bibitem{Gallatly}
Gallatly, W. (1913).
\newblock \emph{The Modern Geometry of the Triangle}.
\newblock Hodgson.

\bibitem{stachel2019-conics}
Glaeser, G., Stachel, H., Odehnal, B. (2016).
\newblock \emph{The Universe of Conics: From the ancient Greeks to 21st century
  developments}.
\newblock Springer.

\bibitem{hartmann2010-apollonius}
Hartmann, F., Jantzen, R. (2010).
\newblock Apollonius's ellipse and evolute revisited -- the alternative
  approach to the evolute.
\newblock \emph{Convergence}.
\newblock \url{www.maa.org/book/export/html/116803}.

\bibitem{etc}
Kimberling, C. (2019).
\newblock Encyclopedia of triangle centers.
\newblock \url{faculty.evansville.edu/ck6/encyclopedia/ETC.html}.

\bibitem{lockwood1961-curves}
Lockwood, E. (1961).
\newblock \emph{A Book of Curves}.
\newblock Cambridge University Press.

\bibitem{ostermann2012}
Ostermann, A., Wanner, G. (2012).
\newblock \emph{Geometry by Its History}.
\newblock Springer Verlag.

\bibitem{patrascu2020}
Pătrașcu, I., Smarandache, F. (2020).
\newblock \emph{The Geometry of the Orthological Triangles}.
\newblock Pons Editions.
\newblock \url{fs.unm.edu/GeometryOrthologicalTriangles.pdf}.

\bibitem{playlist2020-deltoid}
Reznik, D. (2020).
\newblock {YouTube} playlist for constant-area deltoid.
\newblock \url{bit.ly/3gqfIRm}.

\bibitem{thebault-1952}
Th\'{e}bault, V. (1952).
\newblock Perspective and orthologic triangles and tetrahedrons.
\newblock \emph{Amer. Math. Monthly}, 59: 24--28.
\newblock \url{doi.org/10.2307/2307184}.

\bibitem{mw}
Weisstein, E. (2019).
\newblock Mathworld.
\newblock \url{mathworld.wolfram.com}.

\bibitem{wells1991}
Wells, D. (1991).
\newblock \emph{The Penguin Dictionary of Curious and Interesting Geometry}.
\newblock Penguin Books.

\bibitem{zwikker2005}
Zwikker, C. (2005).
\newblock \emph{The Advanced Geometry of Plane Curves and Their Applications}.
\newblock Dover Publications.

\end{thebibliography}

\end{document}